\newenvironment{amssidewaysfigure}
  {\begin{sidewaysfigure}\vspace*{.5\textwidth}\begin{minipage}{\textheight}\centering}
  {\end{minipage}\end{sidewaysfigure}}
\titleformat{\paragraph}[runin]
{\normalfont\normalsize\bfseries}{\theparagraph}{1em}{}
\titleformat{\subparagraph}[runin]
{\normalfont\normalsize\bfseries}{\thesubparagraph}{1em}{}
\titlespacing*{\section} {0pt}{3.5ex plus 1ex minus .2ex}{2.3ex plus .2ex}
\titlespacing*{\subsection} {0pt}{3.25ex plus 1ex minus .2ex}{1.5ex plus .2ex}
\titlespacing*{\subsubsection}{0pt}{3.25ex plus 1ex minus .2ex}{1.5ex plus .2ex}
\titlespacing*{\paragraph} {0pt}{3.25ex plus 1ex minus .2ex}{1em}
\titlespacing*{\subparagraph} {\parindent}{3.25ex plus 1ex minus .2ex}{1em}
\renewcommand*\env@matrix[1][\arraystretch]{%
  \edef\arraystretch{#1}%
  \hskip -\arraycolsep
  \let\@ifnextchar\new@ifnextchar
  \array{*\c@MaxMatrixCols c}}
\newcommand{\ra}[1]{\renewcommand{\arraystretch}{#1}}
\newcommand{\ov}{\overline}
\newcommand{\ot}{\otimes}
\newcommand{\hs}{\hspace{-0.6pt}}
\newcommand{\hf}{\hspace{-0.9pt}}
\DeclareMathOperator{\id}{id}
\DeclareMathOperator{\ide}{id}
\DeclareMathOperator{\Aut}{Aut}
\DeclareMathOperator{\Ex}{Ex}
\numberwithin{table}{section}
\numberwithin{equation}{section}
\numberwithin{figure}{section}
\theoremstyle{plain}
\newtheorem{theorem}{Theorem}[section]
\theoremstyle{plain}
\newtheorem{lemma}[theorem]{Lemma}
\newtheorem{definition}[theorem]{Definition}
\newtheorem{corollary}[theorem]{Corollary}
\theoremstyle{plain}
\newtheorem{proposition}[theorem]{Proposition}
\newtheorem{notation}[theorem]{Notation}
\newtheorem{notations}[theorem]{Notations}
\newtheorem{example}[theorem]{Example}
\theoremstyle{remark}
\newtheorem{remark}[theorem]{Remark}
\theoremstyle{plain}
\newtheorem{examples}[theorem]{Examples}
\begin{document}

\title[Solutions of the braid equation and orders]{Solutions of the braid equation and orders}

\author{Jorge A. Guccione}
\address{Departamento de Matem\'atica\\ Facultad de Ciencias Exactas y Naturales-UBA, Pabell\'on~1-Ciudad Universitaria\\ Intendente Guiraldes 2160 (C1428EGA) Buenos Aires, Argentina.}
\address{Instituto de Investigaciones Matem\'aticas ``Luis A. Santal\'o"\\ Pabell\'on~1-Ciudad Universitaria\\ Intendente Guiraldes 2160 (C1428EGA) Buenos Aires, Argentina.}
\email{vander@dm.uba.ar}

\author{Juan J. Guccione}
\address{Departamento de Matem\'atica\\ Facultad de Ciencias Exactas y Naturales-UBA\\ Pabell\'on~1-Ciudad Universitaria\\ Intendente Guiraldes 2160 (C1428EGA) Buenos Aires, Argentina.}
\address{Instituto Argentino de Matem\'atica-CONICET\\ Saavedra 15 3er piso\\ (C1083ACA) Buenos Aires, Argentina.}
\email{jjgucci@dm.uba.ar}

\thanks{Jorge A. Guccione and Juan J. Guccione were supported by UBACyT 20020150100153BA (UBA) and PIP 11220110100800CO (CONICET)}

\author{Christian Valqui}
\address{Pontificia Universidad Cat\'olica del Per\'u, Secci\'on Matem\'aticas, PUCP,
Av. Universitaria 1801, San Miguel, Lima 32, Per\'u.}

\address{Instituto de Matem\'atica y Ciencias Afines (IMCA) Calle Los Bi\'ologos 245. Urb San C\'esar.
La Molina, Lima 12, Per\'u.}
\email{cvalqui@pucp.edu.pe}

\thanks{Christian Valqui was supported by PUCP-DGI- ID 329 - CAP 2016-1-0045.}

\subjclass[2010]{16T25}
\keywords{Orders, Yang-Baxter equation, Non-degenerate solution}

\begin{abstract}
 We introduce the notion of a non-degenerate solution of the braid equation on the incidence coalgebra of a locally finite order. Each one of these solutions induce by restriction a non-degenerate set theoretic solution over the underlying set. So, it makes sense to ask if a non-degenerate set theoretic solution on the underlying set of a locally finite order extends to a non-degenerate solution on its incidence coalgebra. In this paper we begin the study of this problem.
\end{abstract}

\maketitle

\setcounter{tocdepth}{1}
\tableofcontents

\section*{Introduction}
Let $V$ be a vector space over a field $K$. One of the more basic equations of mathematical physics is the {\em quantum Yang-Baxter equation}
$$
R_{12}\circ R_{13}\circ R_{23} = R_{23}\circ R_{13}\circ R_{12} \quad\text{in $\Aut(V\ot_K V)$}
$$
where $R\colon V\ot_K V\longrightarrow V\ot_K V$ is a bijective linear operator and $R_{ij}$ denotes $R$ acting in the $i$-th and $j$-th coordinates. Let $\tau \in \Aut(V\ot_K V)$ be the flip. Then $R$ satisfies the quantum Yang-Baxter equation if and only if $r\coloneqq \tau\circ R$ satisfies the {\em braid equation}
\begin{equation}\label{eq:trenzas}
r_{12} \circ r_{23}\circ r_{12} = r_{23} \circ r_{12}\circ r_{23}.
\end{equation}
So, both equations are equivalent and working with one or the other is a matter of taste. In the present paper we consider the second one. Since the eighties many solutions of the braid equation have been found, many of them being deformations of the flip. It is interesting to obtain solutions that are not of this type, and in \cite{Dr}, Drinfeld proposed to study the most simple of them, namely, the set theoretic ones, i.e. pairs $(X,r)$, where $X$ is a set and
$$
r\colon X\times X\longrightarrow X\times X
$$
is an invertible map satisfying~\eqref{eq:trenzas}.  Each one of these solutions yields in an evident way a linear solution on the vector space with basis~$X$. From a structural point of view this approach was considered first by Etingof, Schedler and Soloviev \cite{ESS} and Gateva-Ivanova and Van den Bergh \cite{GIVdB} for non involutive solutions, and then by Lu, Yan and Zhu \cite{LYZ} and Soloviev \cite{So} for non-degenerate not necessarily involutive solutions. In the last two decades the theory has developed rapidly, and now it is known that it has connections with bijective 1-cocycles, Bierbach groups and groups of I-type, involutive Yang-Baxter groups, Garside structures, biracks, cyclic sets, braces, Hopf algebras, matched pairs, left symmetric algebras, etcetera (see, for instance \cite{AGV}, \cite{CJO}, \cite{CJO2}, \cite{CJR}, \cite{De}, \cite{GI}, \cite{Ru}, \cite{Ta}).

\smallskip

In \cite{GGV}, the authors began the study of set-type solutions of the braid equation in the context of symmetric tensor categories. The underlying idea is simple: replace the sets by cocommutative coalgebras. The central result of that paper was the existence of the universal solutions in this setting (this generalizes the main result of \cite{LYZ}), and the main technical tool was the definition of non-degenerate maps. But this definition makes sense for non-commutative coalgebras in symmetric tensor categories, and in a forthcoming paper we will investigate the non cocommutative versions of the theoretic results established in~\cite{ESS}, \cite{LYZ} and \cite{So}. In the present paper we are interested in another type of problems, involving an important particular case, and related with the search of non-degenerate solutions on the incidence coalgebra $D$ of a locally finite poset~$(X,\le)$ (for the definitions see Subsection~\ref{subsection posets}). Each non-degenerate coalgebra automorphism
$$
r \colon D\ot_K D \longrightarrow D\ot_K D
$$
induces by restriction a non-degenerate bijection
$$
r_{|}\colon X\times X \longrightarrow X\times X.
$$
Moreover, if $r$ is a solution of the braid equation, then $r_{|}$ is a solution of the set-theoretic braid equation. So, it makes sense to study the following problems: given  a linear automorphism
$$
r\colon D\ot_K D \longrightarrow D\ot_K D
$$
such that $r_{|}$ is a non-degenerate bijection:

\begin{enumerate} [itemsep=1.0ex, topsep=1.0ex, label=\roman*)]

\item  Find necessary and sufficient conditions for $r$ to be a non-degenerate coalgebra automorphism.

\item  Assuming that $r_{|}$ is a non-degenerate solution of the set-theoretic braid equa\-tion, find necessary and sufficient conditions for $r$ to be a non-de\-gen\-er\-ate solution of the braid equation.

\end{enumerate}
In Sections~\ref{section: Properties} and~\ref{seccion construccion de automorfismos no degenerados} we solve completely the first problem (Section~\ref{seccion preliminares} is devoted to the preliminaries). The main result is Theorem~\ref{principal para no degenerados}. In Sections~\ref{section Non-degenerate solutions of the braid equation on coalgebras of orders}, \ref{section la configuracion x menor que y} and~\ref{seccion la configuracion x<y>z no flip} we consider the second problem. In Proposition~\ref{condicion para braided} we encode in a system of (non linear) equations the conditions for $r$ to be a non-degenerate solution of the braid equation. Then we analyze the meaning of the equations in Proposition~\ref{condicion para braided}, when the sum of the lengths of the involved intervals $[a,b]$, $[c,d]$ and $[e,f]$ is less than or equal to~$1$. This allows us to solve these equations in Proposition~\ref{parte lineal}, under fairly general conditions. In Theorem~\ref{Lambdas caso flip x prec y}, given $x\prec y$ in $X$,  we determine all the solutions of the equations determined by subintervals of $[x,y]$, under the hypothesis that $r$ induces the flip on $\{x,y\}\times \{x,y\}$. Using this, in Corollary~\ref{el orden x menor que y} we find all the non-degenerate solutions of the braid equation associated with the poset $(\{x,y\}, \le )$, where~$x<y$. Finally, in Section~\ref{seccion la configuracion x<y>z no flip} we give the solution of the same problem for the configuration $x\prec y \succ z$, under the hypothesis that $r_{|}$ induces a permutation on $\{x,y,z\}\times \{x,y,z\}$ that is not the flip. This allows us to obtain all the non-degenerate solutions of the braid equation associated with the poset $(\{x,y,z\}, \le )$, where~$x\prec y \succ z$, such that $r_{|}$ is not the flip.

\section{Preliminaries}\label{seccion preliminares}
In this paper we work in the category of vector spaces over a field $K$, all the maps between vector spaces are $K$-linear maps, and given vector spaces $V$ and $W$, we let $V\ot W$ denote the tensor product $V\ot_K W$ and we set $V^2\coloneqq V\otimes V$.

\subsection{Braided sets}
Let $C$ be a coalgebra.  Let $r$ be a coalgebra automorphism of $C^2$ and let
$$
\sigma\coloneqq (C\ot \epsilon)\circ r\quad\text{and} \quad \tau\coloneqq (\epsilon \ot C)\circ r.
$$
Then $r=(\sigma\ot \tau)\circ \Delta_{C^2}$. Moreover $\sigma$ and $\tau$ are the unique coalgebra morphisms with this property.

\begin{definition} A pair $(C,r)$, where $r$ is coalgebra automorphism of $C^2$, is called a {\em braided set} if $r$ satisfies the braid equation
\begin{equation}\label{ec de trenzas}
r_{12}\circ r_{23} \circ r_{12} = r_{23}\circ r_{12} \circ r_{23},
\end{equation}
where $r_{12}\coloneqq r\ot C$ and $r_{23}\coloneqq C \ot r$, and it is called {\em non-degenerate} if there exist maps $\ov \sigma\colon C^2 \to C$ and $\ov \tau\colon C^2 \to C$ such that
\begin{align}
&\ov \sigma\circ (C\ot \sigma)\circ (\Delta\ot C)= \sigma\circ (C\ot \ov \sigma)\circ (\Delta\ot C)= \epsilon \ot C\label{no deg a izq}
\shortintertext{and}
&\ov \tau\circ (\tau \ot C)\circ (C \ot \Delta)= \tau \circ (\ov \tau \ot C)\circ (C \ot \Delta)= C \ot \epsilon.\label{no deg a der}
\end{align}
If $(C,r)$ is a non-degenerate pair, then we say that $r$ is non-degenerate.
\end{definition}

A direct computation shows that $r$ is non-degenerate if and only if the maps $(C \ot \sigma)\circ (\Delta \ot C)$ and $(\tau \ot C)\circ (C\ot \Delta)$ are isomorphisms. Moreover, their compositional inverses are the maps $(C \ot \ov \sigma)\circ (\Delta \ot C)$ and $(\ov \tau \ot C)\circ (C\ot \Delta)$, respectively.
This implies  that $\ov \sigma$ and $\ov \tau$ are coalgebra morphisms.

\subsection{Posets}\label{subsection posets}
A {\em partially ordered set} or {\em poset} is a pair $(X,\le)$ consisting of a set $X$ endowed with a binary relation~$\le$, called {\em an order}, that is reflexive, antisymmetric and transitive. For the sake of brevity from now on we will say that $X$ is a poset, without explicit mention of the order. As usual, for $a,b\in X$ we write $a<b$ to mean that $a\le b$ and~$a\ne b$. Two elements $a,b$ of $X$ are {\em comparable} if $a\le b$ or $b\le a$. Otherwise they are {\em incomparable}. A poset $X$ is a {\em totally ordered set} if each pair of elements of~$X$ is comparable. A {\em connected component} of $X$ is an equivalence class of the
 equivalence relation generated by the relation $x\sim y$ if $x$ and $y$ are comparable. Let $X$ be a poset. Each subset $Y$ of $X$  becomes a poset simply by restricting the order relation of $X$ to $Y^2$. A subset $Y$ of $X$ is a chain of $X$ if it is a totally ordered set. The {\em height} of a finite chain $a_0<\cdots < a_n$ is $n$. The {\em height} $h(X)$ of a finite poset $X$ is the height of its largest chain. Let $a,b\in X$. The {closed interval} $[a,b]$ is the set of all the elements $c$ of $X$ such that $a\le c \le b$. We say that~$b$ covers $a$, and we write $a\prec b$ (or $b\succ a$), if $[a,b]=\{a,b\}$. A poset $X$ is {\em locally finite} if $[a,b]$ is finite for all $a,b\in X$.

\subsubsection{The incidence coalgebra of a locally finite poset}
Let $(X,\le)$ be a locally finite poset. Set $Y\coloneqq\{(a,b)\in X\times X: a\le b\}$. It is well known that $D\coloneqq KY$ is a coalgebra, called the {\em incidence coalgebra of $X$}, via
$$
\Delta(a,b)\coloneqq \sum_{c\in [a,b]} (a,c)\ot (c,b).
$$
Consider $KX$ endowed with the coalgebra structure determined by requiring that each $x\in X$ is a group--like element. The $K$-linear map $\iota \colon KX \to D$ defined by $\iota(x)\coloneqq (x,x)$ is an injective coalgebra morphism, whose image is the subcoalgebra of $D$ spanned by the group--like elements of $KY$.

Let $r\colon D\ot D \longrightarrow D\ot D$ be a $K$-linear map and let
$$
\bigl(\lambda_{a|b|c|d}^{e|f|g|h}\bigr)_{(a,b),(c,d),(e,f),(g,h)\in Y}
$$
be the family of scalars defined by
\begin{equation}\label{def de los lambdas}
r((a,b)\ot (c,d))= \sum_{\substack{e\le f\\ g\le h}} \lambda_{a|b|c|d}^{e|f|g|h} (e,f)\ot (g,h).
\end{equation}
From now on we assume that $r$ is invertible.

\begin{remark} \label{compatibilidad con epsilon}
Let $T:= (a,b)\ot (c,d)$. Since
\begin{align*}
& (\epsilon\ot \epsilon)(T) = \delta_{ab}\delta_{cd},\qquad (\epsilon\ot \epsilon)\circ r(T)=\sum_{e,g} \lambda_{a|b|c|d}^{e|e|g|g},\\
& \Delta_{D^2}\circ r (T) = \sum_{\substack{e\le f\\ g\le h}}  \sum_{\substack{y\in [e,f]\\ z\in [g,h]}} \lambda_{a|b|c|d}^{e|f|g|h} (e,y)\ot (g,z)\ot (y,f)\ot (z,h)
\shortintertext{and}
& (r \ot r)\circ \Delta_{D^2} (T) = \sum_{\substack{p\in [a,b]\\ q \in [c,d]}} \sum_{\substack{e\le y\\ g\le z}} \sum_{\substack{y'\le f\\ z'\le h}} \lambda_{a|p|c|q}^{e|y|g|z} \lambda_{p|b|q|d}^{y'|f|z'|h} (e,y)\ot (g,z)\ot (y',f)\ot (z',h),
\end{align*}
the map $r$ is a coalgebra automorphism if and only if the following facts hold:

\begin{itemize}[itemsep=1.0ex, topsep=1.0ex]

\item[-] for $a\le b$ and $c\le d$,
\begin{equation} \label{ordenes: r preserva la counidad}
\qquad\quad\sum_{e,g} \lambda_{a|b|c|d}^{e|e|g|g}=\delta_{ab}\delta_{cd},
\end{equation}

\item[-] for each $a\le b$, $c\le d$, $e\le f$ and $g\le h$,
\begin{equation} \label{eq1}
 \sum_{(p,q)\in [a,b] \times[c,d]} \lambda_{a|p|c|q}^{e|y|g|z} \lambda_{p|b|q|d}^{y|f|z|h} = \lambda_{a|b|c|d}^{e|f|g|h}
\end{equation}
 for all $y$ and $z$ such that $e\le y\le f$ and $g\le z\le h$,

\item[-] For each $a\le b$, $c\le d$, $e\le f$ and $g\le h$,
\begin{equation} \label{eq2}
 \sum_{(p,q)\in [a,b] \times[c,d]} \lambda_{a|p|c|q}^{e|y|g|z} \lambda_{p|b|q|d}^{y'|f|z'|h}=0,
\end{equation}
for all  $y'\le f$, $e\le y$, $z'\le h$ and $g\le z$ such that $(y,z)\ne (y',z')$.
\end{itemize}
\end{remark}

\begin{remark}\label{formulas para sigma y tau}
By the very definitions of $\sigma$ and $\tau$, it is clear that
$$
\sigma((a,b)\ot (c,d))= \sum_{\substack{e\le f\\ g}} \lambda_{a|b|c|d}^{e|f|g|g} (e,f)\quad\text{and}\quad \tau((a,b)\ot (c,d))= \sum_{\substack{e\\ g\le h}} \lambda_{a|b|c|d}^{e|e|g|h} (g,h).
$$
\end{remark}

\section{Non-degenerate automorphisms of the incidence coalgebra}\label{section: Properties}
In this section we determine the main properties of the coefficients $\lambda_{a|b|c|d}^{e|f|g|h}$ and the maps ${}^a \hf (-)$ and $(-)\hs^c$ determined by a non-degenerate coalgebra automorphism of~$D\ot D$ (for the definition of these maps see Notation~\ref{coordenadas conjuntistas}).

\begin{remark}\label{ordenes: r sobre elementos de tipo grupo}
Let $r\colon D\ot D \longrightarrow D\ot  D$ be a non-degenerate coalgebra automorphism. Since $r$ maps group--like elements to group--like elements, for all $a,c\in X$ there exist $e,g\in X$ such that
$$
\lambda_{a|a|c|c}^{e|e|g|g}=1\quad\text{and}\quad \lambda_{a|a|c|c}^{e'|e''|g'|g''}=0\quad\text{for all $(e',e'',g',g'') \ne (e,e,g,g)$.}
$$
So, $r$ induces a map $r_{|}\colon X\times X\longrightarrow X\times X$.  The same argument applied to $r^{-1}$ shows that $r_{|}$ is a bijection. Moreover, since $\ov \sigma$ and $\ov \tau$ map group--like elements to group--like elements, $r_{|}$ is non-degenerate.
\end{remark}

\begin{notation}\label{coordenadas conjuntistas}
In the sequel if $r_{|}(a,c)=(e,g)$ we write ${}^a \hf c\coloneqq e$ and  $a\hs^c \coloneqq g$.
\end{notation}

For the rest of the section we fix a non-degenerate coalgebra automorphism $r$ of $D\ot  D$ and we determine properties of the coefficients $\lambda_{a|b|c|d}^{e|f|g|h}$ and the maps ${}^a \hf (-)$ and $(-)\hs^c$. Note that $r_{|}$ being non-degenerate means that
the maps ${}^a \hf (-)$ and $(-)\hs^c$ are bijections.

\begin{proposition}\label{iguales tensor cover}
Let $a,c,d\in X$. If $d$ covers $c$, then $a\hs^c = a\hs^d$, ${}^a\!d$ covers ${}^a\!c$ and there exists $\alpha\in K^{\times}$ and $\beta\in K$ such that
$$
r((a,a)\ot (c,d)) = \alpha   ({}^a\!c,{}^a\!d)\ot (a\hs^c,a\hs^c)  + \beta ({}^a\!c,{}^a\!c)\ot (a\hs^c,a\hs^c) - \beta ({}^a\!d,{}^a\!d)\ot (a\hs^c,a\hs^c).
$$
\end{proposition}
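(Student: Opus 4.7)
The plan is to expand $r((a,a)\otimes(c,d))$ in the standard basis of $D\otimes D$ and use the coalgebra axioms, together with the invertibility of $r$, to pin down which coefficients $\lambda_{a|a|c|d}^{e|f|g|h}$ can be nonzero. Because $\Delta(a,a)=(a,a)\otimes(a,a)$ and, using that $d$ covers $c$, $\Delta(c,d)=(c,c)\otimes(c,d)+(c,d)\otimes(d,d)$, the sum over $[a,a]\times[c,d]$ appearing in \eqref{eq1} and \eqref{eq2} reduces to the two summands indexed by $(a,c)$ and $(a,d)$. By Remark~\ref{ordenes: r sobre elementos de tipo grupo}, $\lambda_{a|a|c|c}^{e|y|g|z}$ is the indicator of $(e,y,g,z)=({}^a c,{}^a c,a\hs^c,a\hs^c)$ and $\lambda_{a|a|d|d}^{y|f|z|h}$ the indicator of $(y,f,z,h)=({}^a d,{}^a d,a\hs^d,a\hs^d)$. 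Substituting into \eqref{eq1} and \eqref{eq2} and varying $(y,z)\in[e,f]\times[g,h]$, a case analysis will show that $\lambda_{a|a|c|d}^{e|f|g|h}$ must vanish unless $(e,f,g,h)$ belongs to $\{({}^a c,{}^a c,a\hs^c,a\hs^c),\,({}^a d,{}^a d,a\hs^d,a\hs^d),\,({}^a c,{}^a d,a\hs^c,a\hs^d)\}$, the last of these being possible only when $[{}^a c,{}^a d]\times[a\hs^c,a\hs^d]=\{({}^a c,a\hs^c),({}^a d,a\hs^d)\}$. I will name these three coefficients $\beta$, $\gamma$, and $\alpha$ respectively.

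Next I would apply \eqref{ordenes: r preserva la counidad} with $c\ne d$ to obtain $\beta+\gamma=0$, and then use invertibility of $r$: by linearity
\[
r\bigl((a,a)\otimes\bigl((c,d)-\beta(c,c)-\gamma(d,d)\bigr)\bigr)=\alpha\,({}^a c,{}^a d)\otimes(a\hs^c,a\hs^d),
\]
whose argument is nonzero since $(c,c),(c,d),(d,d)$ are linearly independent. Hence the right-hand side is nonzero, so $({}^a c,{}^a d)\otimes(a\hs^c,a\hs^d)$ is a legitimate basis vector (giving ${}^a c\le{}^a d$ and $a\hs^c\le a\hs^d$) and $\alpha\in K^{\times}$. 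Combining $\alpha\ne 0$ with the rectangle condition from paragraph one, and using that ${}^a c\ne{}^a d$ by the non-degeneracy of $r_{|}$, I conclude $a\hs^c=a\hs^d$ (otherwise $({}^a c,a\hs^d)$ would be a third point in the rectangle) and $[{}^a c,{}^a d]=\{{}^a c,{}^a d\}$, i.e.\ ${}^a d$ covers ${}^a c$. Substituting $a\hs^c=a\hs^d$ and $\gamma=-\beta$ into the expansion then yields the announced formula.

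The main obstacle is the case analysis in the first paragraph. For each candidate $(e,f,g,h)$ the equations \eqref{eq1} and \eqref{eq2} have to be checked at every $(y,z)\in[e,f]\times[g,h]$, and each ``non-corner'' $(y,z)$ produces a vanishing constraint on $\lambda_{a|a|c|d}^{e|f|g|h}$ depending on which of the two indicator conditions is active; the delicate point is organising the four sub-cases (both indicators firing, only the first, only the second, or neither) so as to read off the precise rectangle restriction that appears in the third admissible tuple.
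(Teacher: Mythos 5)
Your proposal is correct and follows essentially the same route as the paper's proof: it specializes \eqref{eq1} to the two-term sum over $[a,a]\times[c,d]$, uses the group-like (indicator) form of $\lambda_{a|a|c|c}^{\bullet}$ and $\lambda_{a|a|d|d}^{\bullet}$ to rule out all but the three coefficients, then gets $\beta+\gamma=0$ from the counit condition and $\alpha\ne 0$ from injectivity of $r$, which together with ${}^a\!c\ne{}^a\!d$ forces $a\hs^c=a\hs^d$ and ${}^a\!d\succ{}^a\!c$. The only cosmetic difference is that the paper's case analysis delivers the constraints $g=h=a\hs^c=a\hs^d$ and $f\succ e$ directly, whereas you package them as your equivalent rectangle condition.
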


\begin{proof}
Under the hypothesis, equality~\eqref{eq1} says that for each $e\le y \le f$ and each $g\le z\le h$,
\begin{equation}\label{eq3}
\lambda_{a|a|c|d}^{e|f|g|h} = \lambda_{a|a|c|c}^{e|y|g|z} \lambda_{a|a|c|d}^{y|f|z|h} + \lambda_{a|a|c|d}^{e|y|g|z} \lambda_{a|a|d|d}^{y|f|z|h}.
\end{equation}
If $e<f$ and $g<h$, then taking $y=f$ and $z=g$, we obtain that $\lambda_{a|a|c|d}^{e|f|g|h}=0$. A similar argument proves that if $e=f$ and there exists $z$ such that $g < z <h$, then also $\lambda_{a|a|c|d}^{e|f|g|h}=0$. Furthermore, by symmetry the same occurs if $g=h$ and there exists~$y$ such that $e<y<f$. So, if $\lambda_{a|a|c|d}^{e|f|g|h}\ne 0$, then we have the following possibilities:
\begin{gather*}
 \text{a)}\ e=f\text{ and } h \text{ covers } g \qquad  \text{b)}\ g=h\text{ and } f\text{ covers } e \qquad  \text{c)}\ e=f \text{ and } h=g
\end{gather*}
Next we consider each of these cases separately:

\begin{enumerate}[itemsep=1.0ex, topsep=1.0ex, label=\alph*)]

\item Taking $y=e$ and $z=g$ in equality~\eqref{eq3}, we obtain that
$$
\quad\qquad  \lambda_{a|a|c|d}^{e|e|g|h}=\lambda_{a|a|c|c}^{e|e|g|g} \lambda_{a|a|c|d}^{e|e|g|h} + \lambda_{a|a|c|d}^{e|e|g|g} \lambda_{a|a|d|d}^{e|e|g|h} = \lambda_{a|a|c|c}^{e|e|g|g} \lambda_{a|a|c|d}^{e|e|g|h},
$$
while taking $y=e$ and $z=h$ in equality~\eqref{eq3}, we obtain that
$$
\quad\qquad  \lambda_{a|a|c|d}^{e|e|g|h}=\lambda_{a|a|c|c}^{e|e|g|h} \lambda_{a|a|c|d}^{e|e|h|h} + \lambda_{a|a|c|d}^{e|e|g|h} \lambda_{a|a|d|d}^{e|e|h|h} = \lambda_{a|a|c|d}^{e|e|g|h} \lambda_{a|a|d|d}^{e|e|h|h}.
$$
Therefore, if $ \lambda_{a|a|c|d}^{e|e|g|h}\ne 0$, then $e={}^a\!c={}^a\!d$, which is impossible, since
${}^a\!(-)$ is a bijection.

\item Taking $y=e$ and $z=g$ in equality~\eqref{eq3}, we obtain that
$$
\quad\qquad  \lambda_{a|a|c|d}^{e|f|g|g}=\lambda_{a|a|c|c}^{e|e|g|g} \lambda_{a|a|c|d}^{e|f|g|g} + \lambda_{a|a|c|d}^{e|e|g|g} \lambda_{a|a|d|d}^{e|f|g|g} = \lambda_{a|a|c|c}^{e|e|g|g} \lambda_{a|a|c|d}^{e|f|g|g},
$$
while taking $y=f$ and $z=g$ in equality~\eqref{eq3}, we obtain that
$$
\quad\qquad  \lambda_{a|a|c|d}^{e|f|g|g}=\lambda_{a|a|c|c}^{e|f|g|g} \lambda_{a|a|c|d}^{f|f|g|g} + \lambda_{a|a|c|d}^{e|f|g|g} \lambda_{a|a|d|d}^{f|f|g|g} = \lambda_{a|a|c|d}^{e|f|g|g} \lambda_{a|a|d|d}^{f|f|g|g}.
$$
Therefore, if $\lambda_{a|a|c|d}^{e|f|g|g}\ne 0$ then $(f,g)=({}^a\!d,a\hs^d)$ and $(e,g)=({}^a\!c,a\hs^c)$.

\item Taking $y=e$ and $z=g$ in equality~\eqref{eq3}, we obtain that
$$
\quad\qquad  \lambda_{a|a|c|d}^{e|e|g|g}=\lambda_{a|a|c|c}^{e|e|g|g} \lambda_{a|a|c|d}^{e|e|g|g} + \lambda_{a|a|c|d}^{e|e|g|g} \lambda_{a|a|d|d}^{e|e|g|g} = (\lambda_{a|a|c|c}^{e|e|g|g} +\lambda_{a|a|d|d}^{e|e|g|g}) \lambda_{a|a|c|d}^{e|e|g|g},
$$
which implies that $(e,g)=({}^a\!c,a\hs^c)$ or $(e,g)=({}^a\!d,a\hs^d)$, when $\lambda_{a|a|c|d}^{e|e|g|g}\ne 0$.
\end{enumerate}
Thus,
\begin{align*}
r((a,a)\ot (c,d)) &=  \lambda_{a|a|c|d}^{{}^a\!c | {}^a\!d | a\hs^c | a\hs^c} ({}^a\!c,{}^a\!d)\ot (a\hs^c,a\hs^c)
 + \lambda_{a|a|c|d}^{{}^a\!c | {}^a\!c | a\hs^c | a\hs^c} ({}^a\!c,{}^a\!c)\ot (a\hs^c,a\hs^c)\\
& + \lambda_{a|a|c|d}^{{}^a\!d | {}^a\!d | a\hs^d | a\hs^d} ({}^a\!d,{}^a\!d)\ot (a\hs^d,a\hs^d).
\end{align*}
Since $r$ and $r_{|}$ are bijective,  $\alpha\coloneqq \lambda_{a|a|c|d}^{{}^a\!c | {}^a\!d | a\hs^c | a\hs^c} \ne 0$, and then $a\hs^c = a\hs^d$ and ${}^a\!d$ covers ${}^a\!c$. Also notice that
$$
\beta\coloneqq \lambda_{a|a|c|d}^{{}^a\!c | {}^a\!c | a\hs^c | a\hs^c} = - \lambda_{a|a|c|d}^{{}^a\!d | {}^a\!d | a\hs^d | a\hs^d},
$$
because $(\epsilon \ot \epsilon)\circ r((a,a)\ot (c,d))=0$.
\end{proof}

\begin{proposition}\label{covers tensor iguales}
Let $a,b,c\in X$. If $b$ covers $a$, then $b\hs^c$ covers $a\hs^c$, ${}^a\!c={}^b\!c$ and there exists $\alpha\in K^{\times}$ and $\beta\in K$ such that
$$
r((a,b)\ot (c,c)) = \alpha   ({}^a\!c,{}^a\!c)\ot (a\hs^c,b\hs^c)  + \beta ({}^a\!c,{}^a\!c)\ot (a\hs^c,a\hs^c) - \beta ({}^a\!c,{}^a\!c)\ot (b\hs^c,b\hs^c).
$$
\end{proposition}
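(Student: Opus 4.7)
The plan is to mirror the proof of Proposition~\ref{iguales tensor cover} with the roles of the two tensor factors interchanged. First I would apply equality~\eqref{eq1} with $c=d$ and $b \succ a$, so that the sum over $[a,b]\times [c,d]$ reduces to $p\in\{a,b\}$, $q=c$, yielding the recurrence
$$
\lambda_{a|b|c|c}^{e|f|g|h} = \lambda_{a|a|c|c}^{e|y|g|z}\lambda_{a|b|c|c}^{y|f|z|h} + \lambda_{a|b|c|c}^{e|y|g|z}\lambda_{b|b|c|c}^{y|f|z|h},
$$
valid for all $e\le y\le f$ and $g\le z\le h$. Because $(a,a)\otimes(c,c)$ and $(b,b)\otimes(c,c)$ are group-like and $r$ preserves group-likes (Remark~\ref{ordenes: r sobre elementos de tipo grupo}), the factor $\lambda_{a|a|c|c}^{e|y|g|z}$ is nonzero only when $e=y={}^a\!c$ and $g=z=a\hs^c$ (with value $1$), and similarly $\lambda_{b|b|c|c}^{y|f|z|h}$ forces $y=f={}^b\!c$ and $z=h=b\hs^c$. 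These rigidity facts are what drive every case below.

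Next I would carry out, exactly as in the previous proof, a case analysis on the shape of $(e,f,g,h)$ via careful substitutions of $(y,z)$ in the recurrence: taking $(y,z)=(f,g)$ shows that if $e<f$ and $g<h$ then $\lambda_{a|b|c|c}^{e|f|g|h}=0$, and analogous substitutions kill the mixed configurations when $e=f$ with an intermediate $z$, or $g=h$ with an intermediate $y$. The surviving possibilities are again: (a) $e=f$ and $h\succ g$; (b) $g=h$ and $f\succ e$; (c) $e=f$ and $g=h$.

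The crucial swap with respect to Proposition~\ref{iguales tensor cover} occurs here: case (a), which was impossible before, is now the main case, while case (b) becomes impossible. Indeed, in case (a) substituting $(y,z)=(e,g)$ and $(y,z)=(e,h)$ in the recurrence forces $e={}^a\!c={}^b\!c$, $g=a\hs^c$ and $h=b\hs^c$, producing the first term. In case (b), substituting $(y,z)=(e,g)$ and $(y,z)=(f,g)$ forces $g=a\hs^c=b\hs^c$; since $(-)\hs^c$ is a bijection and $a\neq b$ this is impossible. In case (c), substituting $(y,z)=(e,g)$ yields $\lambda_{a|b|c|c}^{e|e|g|g}=(\lambda_{a|a|c|c}^{e|e|g|g}+\lambda_{b|b|c|c}^{e|e|g|g})\lambda_{a|b|c|c}^{e|e|g|g}$, so a nonzero value forces $(e,g)=({}^a\!c,a\hs^c)$ or $(e,g)=({}^b\!c,b\hs^c)$.

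Finally I assemble the three contributions into
$$
r((a,b)\otimes (c,c)) = \alpha\,({}^a\!c,{}^a\!c)\ot (a\hs^c,b\hs^c) + \beta_1\,({}^a\!c,{}^a\!c)\ot (a\hs^c,a\hs^c) + \beta_2\,({}^b\!c,{}^b\!c)\ot (b\hs^c,b\hs^c),
$$
invoke bijectivity of $r$ to deduce $\alpha\neq 0$ (hence ${}^a\!c={}^b\!c$ and $b\hs^c \succ a\hs^c$), and apply the counit compatibility~\eqref{ordenes: r preserva la counidad}, which now vanishes because $a\neq b$, to get $\beta_1+\beta_2=0$. Setting $\beta:=\beta_1$ yields the displayed formula. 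The only subtlety worth flagging is bookkeeping: one must track which case collapses and which one survives, since compared to Proposition~\ref{iguales tensor cover} their roles are exchanged; the underlying computations themselves are routine and strictly parallel.
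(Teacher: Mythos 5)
Your argument is correct, but it is not the route the paper takes: the paper's entire proof is the one-liner ``Apply Proposition~\ref{iguales tensor cover} to $\tau\circ r\circ\tau$, where $\tau$ is the flip,'' exploiting the fact that conjugating $r$ by the flip is again a non-degenerate coalgebra automorphism whose coefficients are $\tilde\lambda_{c|d|a|b}^{g|h|e|f}=\lambda_{a|b|c|d}^{e|f|g|h}$, so that the already-proved statement transfers verbatim with the two tensor factors exchanged. You instead re-run the whole case analysis on the mirrored recurrence obtained from~\eqref{eq1} with $c=d$ and $a\prec b$, and every step checks out: the substitutions $(y,z)=(f,g)$, etc., kill the configurations with $e<f$ and $g<h$ and the mixed ones, case (b) now collapses because $g=a\hs^c=b\hs^c$ contradicts injectivity of $(-)\hs^c$, case (a) survives and forces $(e,g,h)=({}^a\!c,a\hs^c,b\hs^c)$ with ${}^a\!c={}^b\!c$, and the bijectivity and counit arguments give $\alpha\ne 0$ and $\beta_1+\beta_2=0$ exactly as in the model. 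The trade-off is clear: the paper's reduction is shorter and immune to bookkeeping slips, but it silently relies on the (routine) fact that $\tau\circ r\circ\tau$ inherits non-degeneracy and that its induced map on $X\times X$ swaps the roles of ${}^a\!(-)$ and $(-)\hs^c$; your version is longer but self-contained and makes explicit which case collapses and which survives under the exchange, which the symmetry argument leaves implicit.
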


\begin{proof}
Apply Proposition~\ref{iguales tensor cover} to $\tau\circ r  \circ \tau$, where $\tau$ is the flip.
\end{proof}

\begin{corollary} \label{son iso de ordenes}
For each $a\in X$ the maps ${}^a\!(-)$ and $(-)\hs^a$ are automorphisms of orders. Moreover, if $a$ and $b$ are comparable or, more generally, if $a$ and $b$ belong to the same component of $X$, then ${}^a\!(-)={}^b\!(-)$ and $(-)\hs^a=(-)\hs^b$.
\end{corollary}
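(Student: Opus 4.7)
The corollary has two parts: (A) for each $a\in X$, the maps ${}^a\!(-)$ and $(-)^a$ are order automorphisms of $X$; (B) if $a$ and $b$ lie in the same connected component of $X$, then ${}^a\!(-)={}^b\!(-)$ and $(-)^a=(-)^b$.

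I would prove (B) first. Proposition~\ref{covers tensor iguales} gives ${}^a c={}^b c$ whenever $b\succ a$. Since $X$ is locally finite, any strict inequality $a<b$ (or $a>b$) decomposes as a finite cover chain, so iteration yields ${}^a\!(-)={}^b\!(-)$ for all comparable pairs $a,b$; transitivity of equality then extends this to same-component pairs. The identity $(-)^a=(-)^b$ follows in the same way from Proposition~\ref{iguales tensor cover}.

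For (A), fix $a$ and set $\phi_a\coloneqq{}^a\!(-)$. By the non-degeneracy of $r_{|}$, $\phi_a$ is a bijection of $X$, and Proposition~\ref{iguales tensor cover} makes it cover-preserving; hence by local finiteness $\phi_a$ preserves~$\le$. To show that $\phi_a^{-1}$ also preserves~$\le$, I would pass to $\rho\coloneqq r^{-1}$, which is again a non-degenerate coalgebra automorphism of $D\ot D$; with its own associated maps ${}_\rho^e\!(-)$ and $(-)_\rho^g$, the analogues of Propositions~\ref{iguales tensor cover} and~\ref{covers tensor iguales} hold for $\rho$. Inverting $r_{|}(a,c)=({}^a c,a^c)$ and writing $e\coloneqq{}^a c$, $g\coloneqq a^c$, one obtains $a={}_\rho^e g$ and $c=e_\rho^g$. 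Now for any $e\prec f$ in $X$, the equality half of Proposition~\ref{covers tensor iguales} applied to $\rho$ gives ${}_\rho^e h={}_\rho^f h$ for every $h$, so the unique $g_0$ with ${}_\rho^e g_0=a$ is also the unique $g_1$ with ${}_\rho^f g_1=a$; and the covering half of the same proposition, with this common $g_0$, yields $e_\rho^{g_0}\prec f_\rho^{g_0}$. Since $\phi_a^{-1}(e)=e_\rho^{g_0}$ and $\phi_a^{-1}(f)=f_\rho^{g_0}$, this shows $\phi_a^{-1}$ is cover-preserving, hence order-preserving. The corresponding claim for $(-)^a$ is the mirror argument, swapping the two tensor coordinates and the roles of the two propositions.

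The main obstacle is this last step. Cover-preservation by a bijection of a locally finite poset is not in general enough to force order-reflectivity, so one genuinely needs the extra input from $r^{-1}$; the crucial observation is that as $e$ varies along a single cover $e\prec f$, the second coordinate $g=a^{\phi_a^{-1}(e)}$ stays constant (by the equality half of Proposition~\ref{covers tensor iguales} for $\rho$), which is what allows $\phi_a^{-1}$ to be written locally as the single cover-preserving map $e\mapsto e_\rho^{g_0}$.
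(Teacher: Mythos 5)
Your proposal is correct, and it is in fact more careful than the paper, which states this as an immediate corollary of Propositions~\ref{iguales tensor cover} and~\ref{covers tensor iguales} with no written proof. Your part~(B) and the order-preservation half of part~(A) (covers generate the order by local finiteness; comparability generates the components) are exactly the intended reading. Where you go beyond the paper is in observing that a cover-preserving bijection of a locally finite poset need not have order-preserving inverse (your concern is legitimate: for $X=\mathds{N}\sqcup\{a_0,a_1,\dots\}$ with the second part an antichain, the shift $n\mapsto n+1$ extended by $a_0\mapsto 0$, $a_{i+1}\mapsto a_i$ preserves covers but is not an order automorphism), so that the ``automorphism'' claim genuinely requires the passage to $r^{-1}$ that you carry out; your identification of $\phi_a^{-1}$ locally with the single map $e\mapsto e\hs^{g_0}_\rho$, using that $g_0$ is constant along a cover $e\prec f$, is sound. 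The one step you should justify rather than assert is that the two propositions apply to $\rho=r^{-1}$: they are stated for a non-degenerate coalgebra automorphism, and the non-degeneracy of $r^{-1}$ at the coalgebra level is not immediate. The clean fix is to note that their proofs use only that the map is a bijective coalgebra morphism sending group-like elements to group-like elements whose restriction to $X\times X$ is a non-degenerate bijection; $r^{-1}$ inherits all of this, since $(r_{|})^{-1}$ of a non-degenerate bijection is again non-degenerate (if ${}^{e}_{\rho}g={}^{e}_{\rho}g'=a$ then the corresponding $c,c'$ satisfy ${}^a\!c={}^a\!c'=e$, whence $c=c'$ and $g=g'$; surjectivity is similar). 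With that remark added, your argument is complete.
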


\begin{notations}
We let  ${}^{\bar{a}}\!(-)$ and $(-)\hs^{\bar{a}}$ denote the inverse maps of ${}^a\!(-)$ and $(-)\hs^a$, respectively. Note that here $\bar{a}$ is not an element of $X$.
\end{notations}

\begin{lemma}\label{caso a=b}
Let $a,c, d, e, f, g,h\in X$ such that $c\le d$, $e\le f$ and $g\le h$. If $g\ne a\hs^c$  or $h\ne a\hs^c$, then $\lambda_{a|a|c|d}^{e|f|g|h}= 0$.
\end{lemma}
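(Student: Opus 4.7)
The plan is to induct on the cardinality $|[c,d]|$, which is finite since $X$ is locally finite. The base case $|[c,d]|=1$, i.e.~$c=d$, is immediate from Remark~\ref{ordenes: r sobre elementos de tipo grupo}: the only nonzero value of $\lambda_{a|a|c|c}^{e|f|g|h}$ occurs at $(e,f,g,h)=({}^a c,{}^a c,a\hs^c,a\hs^c)$, so in particular $g = h = a\hs^c$ whenever the coefficient does not vanish. The base case $|[c,d]|=2$, i.e.~$d$ covers $c$, is exactly Proposition~\ref{iguales tensor cover}, whose explicit formula shows that every nonzero term on the right hand side has second tensor factor $(a\hs^c,a\hs^c)$.

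For the inductive step, assume $|[c,d]|\ge 3$, so that $d$ does not cover $c$. Since every $p\in[c,d]$ is comparable to~$c$, Corollary~\ref{son iso de ordenes} yields $a\hs^p=a\hs^c$ for all such $p$; write $u$ for this common value. Specializing equation~\eqref{eq1} to $b=a$ (so that $[a,b]=\{a\}$) gives
$$
\lambda_{a|a|c|d}^{e|f|g|h}=\sum_{p\in[c,d]}\lambda_{a|a|c|p}^{e|y|g|z}\,\lambda_{a|a|p|d}^{y|f|z|h}
$$
for every $y\in[e,f]$ and $z\in[g,h]$. If $g\ne u$, choose $y=e$ and $z=g$: for $p\ne d$ the interval $[c,p]$ is a proper subinterval of $[c,d]$, hence strictly smaller, and the inductive hypothesis forces $\lambda_{a|a|c|p}^{e|e|g|g}=0$ (the upper indices fail the condition $g=u=a\hs^c$); for $p=d$ the second factor $\lambda_{a|a|d|d}^{e|f|g|h}$ vanishes by Remark~\ref{ordenes: r sobre elementos de tipo grupo}, since $g\ne u=a\hs^d$. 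Hence every term of the sum vanishes. If instead $h\ne u$, choose $y=f$ and $z=h$ and argue symmetrically: for $p\ne c$ the interval $[p,d]$ is strictly smaller than $[c,d]$ and induction kills $\lambda_{a|a|p|d}^{f|f|h|h}$, while for $p=c$ the first factor $\lambda_{a|a|c|c}^{e|f|g|h}$ vanishes by Remark~\ref{ordenes: r sobre elementos de tipo grupo}.

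The argument is largely mechanical once the two natural specializations $(y,z)=(e,g)$ and $(y,z)=(f,h)$ of equation~\eqref{eq1} are identified, so no serious obstacle is expected. The only conceptual input beyond the recursion is Corollary~\ref{son iso de ordenes}, which guarantees that $a\hs^p$ is constant as $p$ ranges over $[c,d]$, so that a single value $u$ appears uniformly on both sides of the induction.
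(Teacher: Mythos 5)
Your proof is correct and follows essentially the same route as the paper's: both induct on the size of $[c,d]$ (you use cardinality, the paper uses height $\mathfrak{h}[c,d]$, which is equivalent here), anchor the base cases in Remark~\ref{ordenes: r sobre elementos de tipo grupo} and Proposition~\ref{iguales tensor cover}, and run the inductive step by specializing~\eqref{eq1} at $b=a$ with suitable $(y,z)$, using Corollary~\ref{son iso de ordenes} to keep $a\hs^p$ constant on $[c,d]$. The only cosmetic difference is that in the case $h\ne a\hs^c$ the paper takes $(y,z)=(e,h)$ where you take $(f,h)$; both choices work.
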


\begin{proof}
By Remark~\ref{ordenes: r sobre elementos de tipo grupo} and Proposition~\ref{iguales tensor cover} we know that the statement is true when $\mathfrak{h}[c,d]\le 1$. Assume that it is true when $\mathfrak{h}[c,d]\le n\ge 1$ and that $\mathfrak{h}[c,d]= n+1$. If $g\ne a\hs^c$, then
$$
\lambda_{a|a|c|d}^{e|f|g|h} = \sum_{q \in [c,d]} \lambda_{a|a|c|q}^{e|e|g|g} \lambda_{a|a|q|d}^{e|f|g|h}=0,
$$
because $\lambda_{a|a|d|d}^{y|f|g|h}=0$ since $g\ne a\hs^d=a\hs^c$ and  $\lambda_{a|a|c|q}^{e|e|g|g}=0$ when $q<d$, by the inductive hypothesis;
while if $h\ne a\hs^c$, then
$$
\lambda_{a|a|c|d}^{e|f|g|h} = \sum_{q \in [c,d]} \lambda_{a|a|c|q}^{e|e|g|h} \lambda_{a|a|q|d}^{e|f|h|h}=0,
$$
because $\lambda_{a|a|c|c}^{e|e|g|h}=0$ since $h\ne a\hs^c$ and $\lambda_{a|a|q|d}^{e|f|h|h}=0$ when $q>c$, by the inductive hypothesis, since $a\hs^q = a\hs^c \ne h$.
\end{proof}

\begin{proposition}\label{donde puede no anularse}
Let $a\le b$, $c\le d$, $e\le f$ and $g\le h$. If $\lambda_{a|b|c|d}^{e|f|g|h}\ne 0$, then it is true that $a\hs^c\le g\le h\le b\hs^c$ and ${}^a\!c\le e \le f \le {}^a\!d$.
\end{proposition}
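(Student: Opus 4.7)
The plan is to prove all four inequalities simultaneously by strong induction on $n \coloneqq \mathfrak{h}[a,b] + \mathfrak{h}[c,d]$. The base case $n = 0$ forces $a = b$ and $c = d$, and Remark~\ref{ordenes: r sobre elementos de tipo grupo} tells us that $\lambda_{a|a|c|c}^{e|f|g|h}$ can be nonzero only when $e = f = {}^a\!c = {}^a\!d$ and $g = h = a\hs^c = b\hs^c$, in which case the four inequalities hold trivially.

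For the inductive step, assume $\lambda_{a|b|c|d}^{e|f|g|h} \ne 0$ and suppose, for contradiction, that ${}^a\!c \not\le e$. Taking $y = e$ and $z = g$ in~\eqref{eq1} yields
$$
\lambda_{a|b|c|d}^{e|f|g|h} = \sum_{(p,q)\in[a,b]\times [c,d]} \lambda_{a|p|c|q}^{e|e|g|g}\,\lambda_{p|b|q|d}^{e|f|g|h}.
$$
The crucial estimate is that $\mathfrak{h}[a,p] + \mathfrak{h}[c,q] < n$ strictly for every $(p,q) \ne (b,d)$: indeed $p < b$ concatenates a longest chain in $[a,p]$ with a chain of positive length in $[p,b]$ to force $\mathfrak{h}[a,p] \le \mathfrak{h}[a,b] - 1$, and symmetrically when $q < d$. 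The inductive hypothesis applied to the first factor thus forces ${}^a\!c \le e$, contradicting the assumption, so every such term vanishes. The single remaining term at $(p,q) = (b,d)$ is $\lambda_{a|b|c|d}^{e|e|g|g}\,\lambda_{b|b|d|d}^{e|f|g|h}$; its second factor requires $e = f = {}^b\!d$ and $g = h = b\hs^d$ by Remark~\ref{ordenes: r sobre elementos de tipo grupo}, but then Corollary~\ref{son iso de ordenes} identifies ${}^b\!d = {}^a\!d$ (as $a, b$ are comparable) and the order-preservation of ${}^a(-)$ combined with $c \le d$ gives $e = {}^a\!d \ge {}^a\!c$, again a contradiction. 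Thus $\lambda_{a|b|c|d}^{e|f|g|h} = 0$, the desired contradiction. The inequality $a\hs^c \le g$ follows identically, the boundary term now yielding $g = b\hs^d = b\hs^c \ge a\hs^c$ via Corollary~\ref{son iso de ordenes} and order-preservation of $(-)\hs^c$ applied to $a \le b$.

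The upper bounds $f \le {}^a\!d$ and $h \le b\hs^c$ are handled dually by taking $y = f$ and $z = h$ in~\eqref{eq1}, which swaps the roles of the two corners of $[a,b]\times[c,d]$: now it is $(p,q) = (a,c)$ at which the sum of heights $\mathfrak{h}[p,b] + \mathfrak{h}[q,d]$ fails to drop below $n$. At this corner the factor $\lambda_{a|a|c|c}^{e|f|g|h}$ forces $e = f = {}^a\!c$ and $g = h = a\hs^c$, whence $f = {}^a\!c \le {}^a\!d$ and $h = a\hs^c \le b\hs^c$ again by Corollary~\ref{son iso de ordenes}. For every other $(p,q)$ the inductive hypothesis applied to the factor $\lambda_{p|b|q|d}^{f|f|h|h}$ delivers the required upper bound. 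The only real obstacle is the bookkeeping verifying that exactly one corner of $[a,b]\times[c,d]$ fails to strictly reduce the sum of heights; this is immediate from the chain-concatenation inequality $\mathfrak{h}[a,p] + \mathfrak{h}[p,b] \le \mathfrak{h}[a,b]$ available in any locally finite poset.
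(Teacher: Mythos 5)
Your proof is correct, and the engine driving it is the same as the paper's: specialize equation~\eqref{eq1} at the degenerate choices $y=e,z=g$ (resp.\ $y=f,z=h$), kill every summand except one corner of $[a,b]\times[c,d]$ by induction, and kill the surviving corner term using the group-like behaviour from Remark~\ref{ordenes: r sobre elementos de tipo grupo} together with Corollary~\ref{son iso de ordenes}. The differences are organizational. You run a single strong induction on $\mathfrak{h}[a,b]+\mathfrak{h}[c,d]$ and prove all four inequalities directly; the paper instead first isolates the case $a=b$ as Lemma~\ref{caso a=b} (proved by induction on $\mathfrak{h}[c,d]$), then inducts on $\mathfrak{h}[a,b]$ alone, splitting the sum by $p<b$ versus $p=b$ and invoking the Lemma for the entire slice $p=b$ rather than just the single corner $(b,d)$; finally, it only proves the bounds on $g,h$ and obtains those on $e,f$ by conjugating with the flip, as in Proposition~\ref{covers tensor iguales}. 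Your version buys a self-contained argument that absorbs Lemma~\ref{caso a=b} into the base and intermediate cases of one induction, at the cost of repeating the estimate four times where the paper pays twice and cashes in a symmetry. One small expository point: in the upper-bound step you should say explicitly that the inductive hypothesis applied to $\lambda_{p|b|q|d}^{f|f|h|h}$ gives $f\le {}^p\!d$ and $h\le b\hs^q$, and that these equal ${}^a\!d$ and $b\hs^c$ by Corollary~\ref{son iso de ordenes}; you use this identification but only state it for the corner term.
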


\begin{proof}
By symmetry it is sufficient to prove that if $a\hs^c\le g\le h\le b\hs^c$ is false, then $\lambda_{a|b|c|d}^{e|f|g|h}= 0$. By Lemma~\ref{caso a=b} this is true when $a=b$. Assume that it is true when $\mathfrak{h}[a,b]=m$ and that $\mathfrak{h}[a,b]=m+1$. On one hand, if $a\hs^c \nleq g$, then
\begin{align*}
\lambda_{a|b|c|d}^{e|f|g|h} & = \sum_{(p,q)\in [a,b]\times [c,d]} \lambda_{a|p|c|q}^{e|e|g|g} \lambda_{p|b|q|d}^{e|f|g|h}\\
&= \sum_{_{(p,q)\in [a,b)\times [c,d]}} \lambda_{a|p|c|q}^{e|e|g|g} \lambda_{p|b|q|d}^{e|f|g|h} + \sum_{q \in [c,d]} \lambda_{a|b|c|q}^{e|e|g|g} \lambda_{b|b|q|d}^{e|f|g|h}\\
&=0
\end{align*}
because

\begin{itemize}[itemsep=1.0ex, topsep=1.0ex]

\item[-] $\lambda_{b|b|q|d}^{e|f|g|h}=0$ by Lemma~\ref{caso a=b}, since by Corollary~\ref{son iso de ordenes}, we have $b\hs^q=b\hs^c \nleq g$,

\item[-]  $\lambda_{a|p|c|q}^{e|e|g|g}=0$ when $p<b$, by the inductive hypothesis.

\end{itemize}
On the other hand, if $h \nleq b\hs^c$, then
\begin{align*}
\lambda_{a|b|c|d}^{e|f|g|h} & = \sum_{(p,q)\in [a,b]\times [c,d]} \lambda_{a|p|c|q}^{e|e|g|h} \lambda_{p|b|q|d}^{e|f|h|h}\\
&= \sum_{(p,q)\in (a,b]\times [c,d]} \lambda_{a|p|c|q}^{e|e|g|h} \lambda_{p|b|q|d}^{e|f|h|h} + \sum_{q\in [c,d]} \lambda_{a|a|c|q}^{e|e|g|h} \lambda_{a|b|q|d}^{e|f|h|h}\\
&=0
\end{align*}
because

\begin{itemize}[itemsep=1.0ex, topsep=1.0ex]

\item[-] $\lambda_{p|b|q|d}^{e|f|h|h}=0$ when $p>a$, by the inductive hypothesis, since by Corollary~\ref{son iso de ordenes}, we have $h \nleq b\hs^q=b\hs^c$,

\item[-] $\lambda_{a|a|c|q}^{e|e|g|h}=0$ by Lemma~\ref{caso a=b}, since by Corollary~\ref{son iso de ordenes}, we have $h\nleq a\hs^c$.

\end{itemize}
This finishes the proof.
\end{proof}

\begin{corollary}
The following formula holds:
$$
r((a,b)\ot (c,d)) =\sum_{\substack{\{(x,y):a\le x\le y \le b\}\\ \{(w,z):c\le w\le z \le d\}}} \lambda_{a|b|c|d}^{{}^a\!w | {}^a\!z | x\hs^c | y\hs^c} ({}^a\!w, {}^a\!z)\ot (x\hs^c, y\hs^c).
$$
\end{corollary}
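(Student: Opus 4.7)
The plan is to obtain the formula as an immediate reindexing of the defining expansion
$$
r((a,b)\ot (c,d))= \sum_{\substack{e\le f\\ g\le h}} \lambda_{a|b|c|d}^{e|f|g|h} (e,f)\ot (g,h)
$$
using the vanishing result of Proposition~\ref{donde puede no anularse} together with the order-theoretic information from Corollary~\ref{son iso de ordenes}.

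First I would invoke Proposition~\ref{donde puede no anularse} to restrict the summation to those quadruples $(e,f,g,h)$ with $e\le f$ and $g\le h$ satisfying ${}^a\!c\le e\le f\le {}^a\!d$ and $a\hs^c\le g\le h\le b\hs^c$, since all other terms vanish. Then I would use Corollary~\ref{son iso de ordenes}, which says that the maps ${}^a\!(-)$ and $(-)\hs^c$ are order automorphisms, to rewrite these index ranges bijectively. Concretely, setting $w\coloneqq {}^{\bar a}e$, $z\coloneqq {}^{\bar a}f$, $x\coloneqq g^{\bar c}$ and $y\coloneqq h^{\bar c}$ (using the notation from \textbf{Notations}), the conditions ${}^a\!c\le e\le f\le {}^a\!d$ and $a\hs^c\le g\le h\le b\hs^c$ translate into $c\le w\le z\le d$ and $a\le x\le y\le b$, respectively; and conversely every pair $(w,z)$ with $c\le w\le z\le d$ and every pair $(x,y)$ with $a\le x\le y\le b$ arises in this way from a unique $(e,f,g,h)$.

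Substituting $e={}^a\!w$, $f={}^a\!z$, $g=x\hs^c$, $h=y\hs^c$ into the restricted sum produces exactly
$$
r((a,b)\ot (c,d)) =\sum_{\substack{\{(x,y):a\le x\le y \le b\}\\ \{(w,z):c\le w\le z \le d\}}} \lambda_{a|b|c|d}^{{}^a\!w | {}^a\!z | x\hs^c | y\hs^c} ({}^a\!w, {}^a\!z)\ot (x\hs^c, y\hs^c),
$$
as desired. There is no genuine obstacle here: the entire content of the corollary is the combination of the non-vanishing constraint with the bijectivity of the order automorphisms ${}^a\!(-)$ and $(-)\hs^c$; the only point to be careful about is checking that $x\le y$ is equivalent to $x\hs^c\le y\hs^c$ (and similarly for ${}^a\!w\le {}^a\!z$), which is precisely the content of the order-preservation in Corollary~\ref{son iso de ordenes}.
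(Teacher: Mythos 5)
Your proposal is correct and is exactly the paper's argument: the paper proves this corollary by citing Proposition~\ref{donde puede no anularse} (the vanishing constraint) together with Corollary~\ref{son iso de ordenes} (the maps ${}^a\!(-)$ and $(-)\hs^c$ are order automorphisms), which is precisely the reindexing you carry out. No gaps; you have merely written out the "immediate" step the paper leaves implicit.
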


\begin{proof}
It follows immediately from Proposition~\ref{donde puede no anularse} and Corolary~\ref{son iso de ordenes}.
\end{proof}

\begin{proposition}\label{lambdas como productos}
Let $a\le b$, $c\le d$ $e\le f$ and $g\le h$ such that $a\hs^c\le g\le h\le b\hs^c$ and ${}^a\!c\le e \le f \le {}^a\!d$.  For each $y,z\in X$ such that $e\le y \le f$ and $g\le z \le h$, the following equality holds:
\begin{equation}\label{producto}
\lambda^{e|f|g|h}_{a|b|c|d} = \lambda_{a | z\hs^{\bar{c}} | c | {}^{\bar{a}}\!y}^{e|y|g|z} \lambda_{z\hs^{\bar{c}}| b | {}^{\bar{a}}\!y | d}^{y|f|z|h}.
\end{equation}
\end{proposition}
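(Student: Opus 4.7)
The plan is to specialize equation~\eqref{eq1} of Remark~\ref{compatibilidad con epsilon} to the given intermediate indices $y$ and $z$, which by hypothesis satisfy $e\le y\le f$ and $g\le z\le h$. This yields
$$
\lambda^{e|f|g|h}_{a|b|c|d} = \sum_{(p,q)\in [a,b]\times [c,d]} \lambda_{a|p|c|q}^{e|y|g|z}\, \lambda_{p|b|q|d}^{y|f|z|h},
$$
so the proposition will follow once I show that the only index $(p,q)$ for which the summand can be non-zero is $(z\hs^{\bar c},\, {}^{\bar a}\!y)$, and that this pair indeed lies in $[a,b]\times [c,d]$.

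To establish the first point I would fix $(p,q)\in [a,b]\times [c,d]$ and assume that both factors of the corresponding summand are non-zero. Applying Proposition~\ref{donde puede no anularse} to the first factor gives, among other things, $z\le p\hs^c$ and $y\le {}^a\!q$, while applying it to the second gives $p\hs^q\le z$ and ${}^p\!q\le y$. The key observation is that $a\le p$ and $c\le q$ place $a,p$ in a common connected component and $c,q$ in another, so Corollary~\ref{son iso de ordenes} identifies $(-)\hs^q$ with $(-)\hs^c$ and ${}^p\!(-)$ with ${}^a\!(-)$. The four inequalities then collapse to $p\hs^c=z$ and ${}^a\!q=y$, and applying the inverse order-automorphisms $(-)\hs^{\bar c}$ and ${}^{\bar a}\!(-)$ forces $p=z\hs^{\bar c}$ and $q={}^{\bar a}\!y$.

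For the second point I would invoke once more Corollary~\ref{son iso de ordenes}, which asserts that $(-)\hs^c$ and ${}^a\!(-)$ are automorphisms of orders. Applied to the standing hypotheses $a\hs^c\le z\le b\hs^c$ and ${}^a\!c\le y\le {}^a\!d$, this immediately yields $a\le z\hs^{\bar c}\le b$ and $c\le {}^{\bar a}\!y\le d$, so the candidate pair does belong to $[a,b]\times[c,d]$ and contributes an actual summand. Putting both parts together reduces the sum to a single term, which is exactly the right-hand side of~\eqref{producto}. I do not foresee any real obstacle: the whole argument is a short bookkeeping exercise once Proposition~\ref{donde puede no anularse} pins down the support of the coefficients and Corollary~\ref{son iso de ordenes} trivializes the dependence of the maps ${}^a\!(-)$ and $(-)\hs^c$ on arguments lying in a fixed connected component.
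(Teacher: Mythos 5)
Your argument is correct and coincides with the paper's own proof: both specialize equality~\eqref{eq1} to the chosen $y,z$, use Proposition~\ref{donde puede no anularse} on each factor together with Corollary~\ref{son iso de ordenes} to force $p=z\hs^{\bar c}$ and $q={}^{\bar a}\!y$, and thereby collapse the sum to a single term. Your extra check that $(z\hs^{\bar c},{}^{\bar a}\!y)$ actually lies in $[a,b]\times[c,d]$ is a detail the paper leaves implicit, but it follows exactly as you say.
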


\begin{proof}
By Proposition~\ref{donde puede no anularse} and Corollary~\ref{son iso de ordenes}, if $\lambda_{a|p|c|q}^{e|y|g|z} \lambda_{p|b|q|d}^{y|f|z|h}\ne 0$, then
$$
{}^a\!c \le e \le y \le {}^a\!q,\quad {}^a\!q \le y \le f \le   {}^a\!d, \quad a\hs^c \le g \le z \le p\hs^c\quad \text{and}\quad p\hs^c \le z \le h \le b\hs^c,
$$
So, $q={}^{\bar{a}}\!y$, $p=z\hs^{\bar{c}}$, and the result follows from equality~\eqref{eq1}.
\end{proof}

\begin{examples}\label{ejemplos de lambdas como productos} Let $a\le b$ and $c\le d$. From the previous proposition it follows that:

\begin{enumerate}[itemsep=1.0ex, topsep=1.0ex, label=\emph{\arabic*)}]

\item  for each $e,g\in X$ such that $a\hs^c\le g\le b\hs^c$ and ${}^a\!c\le e  \le {}^a\!d$,
$$
\lambda_{a|b|c|d}^{e|e|g|g}= \lambda_{a | g\hs^{\bar{c}} | c | {}^{\bar{a}}\!e}^{e|e|g|g} \lambda_{g\hs^{\bar{c}} | b | {}^{\bar{a}}\!e | d}^{e|e|g|g},
$$

\item for each $e,f,g\in X$ such that $a\hs^c\le g\le b\hs^c$ and ${}^a\!c\le e \prec f \le {}^a\!d$,
$$
\lambda^{e|f|g|g}_{a|b|c|d} = \lambda_{a | g\hs^{\bar{c}} | c | {}^{\bar{a}}\!e}^{e|e|g|g} \lambda_{g\hs^{\bar{c}} | b | {}^{\bar{a}}\!e | d}^{e|f|g|g} = \lambda_{a | g\hs^{\bar{c}} | c | {}^{\bar{a}}\!f}^{e|f|g|g} \lambda_{g\hs^{\bar{c}} | b {}^{\bar{a}}\!f | d}^{f|f|g|g},
$$

\item for each $e,g,h\in X$ such that ${}^a\!c\le e\le {}^a\!d$ and $a\hs^c\le g \prec h\le b\hs^c$,
$$
\lambda^{e|e|g|h}_{a|b|c|d} = \lambda_{a | g\hs^{\bar{c}} | c | {}^{\bar{a}}\!e}^{e|e|g|g} \lambda_{g\hs^{\bar{c}} | b | {}^{\bar{a}}\!e | d}^{e|e|g|h} =  \lambda_{a | h\hs^{\bar{c}} | c | {}^{\bar{a}}\!e}^{e|e|g|h} \lambda_{h\hs^{\bar{c}} | b | {}^{\bar{a}}\!e | d}^{e|e|h|h}.
$$
\end{enumerate}
\end{examples}

\begin{notations}
For $p,q,m,n\in \mathds{N}_0$, we let ${}_r\Lambda_{pq}^{mn}$ denote the restriction of the family
$$
\bigl(\lambda_{a|b|c|d}^{e|f|g|h}\bigr)_{(a,b),(c,d),(e,f),(g,h)\in Y}
$$
to the set of indices $\bigl\{\bigl((a,b),(c,d),(e,f),(g,h)\bigr)\bigr\}$ such that
$$
\text{$\mathfrak{h}[a,b]=p$, $\mathfrak{h}[c,d]=q$, $\mathfrak{h}[e,f]=m$, $\mathfrak{h}[g,h]=n$, $[g,h]\subseteq [a\hs^c,b\hs^c]$ and $[e,f]\subseteq [{}^a\!c,{}^a\!d]$}.
$$
Moreover, we set
$$
{}_r\Lambda_u^v \coloneqq \bigcup_{\substack{p+q=u\\ m+n=v}} {}_r\Lambda_{pq}^{mn},
$$
and we denote by $\Ex({}_r\Lambda_n^0)$ the restriction of ${}_r\Lambda_n^0$ to the set
$$
\bigl\{\bigl((a,b),(c,d),(e,e),(g,g)\bigr)\bigr): \text{$(e,g)= ({}^a\!c, a\hs^{c})$ or $(e,g)= ({}^b\!d, b\hs^d)$} \bigr\}.
$$
Note that $\Ex({}_r\Lambda_0^0)= {}_r\Lambda_0^0$ and $\Ex({}_r\Lambda_1^0)= {}_r\Lambda_1^0$.
\end{notations}

\begin{proposition} Let $\tilde{r}\colon D\ot D \longrightarrow D\ot  D$ be a non-degenerate coalgebra automorphism. Then $\tilde{r}= r$ if and only if
\begin{equation}\label{eq: unicidad}
\Ex({}_{\tilde{r}}\Lambda_n^0)=\Ex({}_r\Lambda_n^0)\quad\text{for all $n$} \qquad\text{and}\qquad {}_{\tilde{r}}\Lambda_1^1 = {}_r\Lambda_1^1.
\end{equation}
\end{proposition}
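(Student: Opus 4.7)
The forward implication is immediate; I focus on the converse. Assuming \eqref{eq: unicidad}, I will show that every coefficient $\lambda^{e|f|g|h}_{a|b|c|d}$ of $r$ and $\tilde r$ coincide, so that $r=\tilde r$. By Proposition~\ref{donde puede no anularse} it suffices to consider indices satisfying ${}^a\!c\le e\le f\le {}^a\!d$ and $a\hs^c\le g\le h\le b\hs^c$. Since $\Lambda_0^0=\Ex(\Lambda_0^0)$ is part of the given data, $r_{|}=\tilde r_{|}$, and hence the maps ${}^a(-)$, $(-)\hs^c$ and their inverses coincide for both automorphisms and may be used freely below.

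The proof is by strong induction on $N\coloneqq\mathfrak{h}[a,b]+\mathfrak{h}[c,d]+\mathfrak{h}[e,f]+\mathfrak{h}[g,h]$. The cases $N\le 1$ are contained in $\Lambda_0^0\cup\Lambda_1^0=\Ex(\Lambda_0^0)\cup\Ex(\Lambda_1^0)$, hence part of the data. For $N\ge 2$ I distinguish three subcases. If $\mathfrak{h}[e,f]=\mathfrak{h}[g,h]=0$, applying Example~\ref{ejemplos de lambdas como productos}(1) with the split $(y,z)=(e,g)$ gives
$$
\lambda_{a|b|c|d}^{e|e|g|g} = \lambda_{a|g\hs^{\bar c}|c|{}^{\bar a}\!e}^{e|e|g|g}\,\lambda_{g\hs^{\bar c}|b|{}^{\bar a}\!e|d}^{e|e|g|g},
$$
and Corollary~\ref{son iso de ordenes} identifies $(e,g)$ with an extremal corner of each factor (the top-type corner of the first and the bottom-type corner of the second), so both factors belong to $\Ex(\Lambda_n^0)$ for some $n$, and thus are given. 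If $(p,q,m,n)\in\Lambda_1^1$, the coefficient is itself among the data. Otherwise $m+n\ge 1$ and $(p,q,m,n)\notin\Lambda_1^1$; I apply Proposition~\ref{lambdas como productos} with a judiciously chosen $(y,z)\in[e,f]\times[g,h]$ producing factors with sizes $N_1,N_2<N$, which are determined by the inductive hypothesis.

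The crux of the argument is the existence of such a \emph{good} split. A direct inspection shows that $N_1=0$ only for $(y,z)=(e,g)$ with $(e,g)=({}^a\!c,a\hs^c)$, and symmetrically $N_2=0$ only for $(y,z)=(f,h)$ with $(f,h)=({}^a\!d,b\hs^c)$; the bad splits thus lie in the two corner points $\{(e,g),(f,h)\}$. If $|[e,f]|\cdot|[g,h]|\ge 3$, one simply picks $(y,z)$ outside these corners. If $|[e,f]|\cdot|[g,h]|=2$, then $(m,n)\in\{(1,0),(0,1)\}$ and the only two available splits are precisely these corners; requiring both to be bad yields, in the case $(m,n)=(1,0)$, the relations $a\hs^c=b\hs^c$ (hence $a=b$, so $p=0$) and $f={}^a\!d$ (hence $q=1$), forcing $(p,q,m,n)=(0,1,1,0)\in\Lambda_1^1$, a contradiction to the subcase assumption; the case $(m,n)=(0,1)$ is symmetric. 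A good split therefore always exists and the induction closes.
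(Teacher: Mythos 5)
Your proof is correct and follows essentially the same route as the paper's: both rest on the factorization of Proposition~\ref{lambdas como productos}, reducing every coefficient to products of elements of $\bigcup_n\Ex({}_r\Lambda_n^0)$ and ${}_r\Lambda_1^1$, with the same corner analysis identifying exactly when a split degenerates. The only difference is organizational --- you run a single strong induction on $\mathfrak{h}[a,b]+\mathfrak{h}[c,d]+\mathfrak{h}[e,f]+\mathfrak{h}[g,h]$ (using, implicitly but correctly, the subadditivity of heights under splitting an interval), whereas the paper stages the argument as ${}_r\Lambda_u^0$ first, then ${}_r\Lambda_u^1$ by induction on $u$, then induction on $v$.
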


\begin{proof} Clearly the conditions are necessary. So, we only need to prove that they are sufficient.
For the sake of brevity we write
$$
\widetilde{\lambda}_{a|b|c|d}^{e|f|g|h}\coloneqq {}_{\tilde{r}}\Lambda \bigl((a,b),(c,d),(e,f),(g,h)\bigr).
$$
For each $e,g\in X$ such that $a\hs^c\le g\le b\hs^c$ and ${}^a\!c\le e  \le {}^a\!d$, we have
$$
\lambda_{a|b|c|d}^{e|e|g|g}= \lambda_{a | g\hs^{\bar{c}} | c | {}^{\bar{a}}\!e}^{e|e|g|g} \lambda_{g\hs^{\bar{c}} | b | {}^{\bar{a}}\!e | d}^{e|e|g|g} = \widetilde{\lambda}_{a | g\hs^{\bar{c}} | c | {}^{\bar{a}}\!e}^{e|e|g|g} \widetilde{\lambda}_{g\hs^{\bar{c}} | b | {}^{\bar{a}}\!e | d}^{e|e|g|g} = \widetilde{\lambda}_{a|b|c|d}^{e|e|g|g},
$$
where the first and the last equality hold by  item~1) of Examples~\ref{ejemplos de lambdas como productos}, and the second equality holds since
$$
\lambda_{a | g\hs^{\bar{c}} | c | {}^{\bar{a}}\!e}^{e|e|g|g}, \lambda_{g\hs^{\bar{c}} | b | {}^{\bar{a}}\!e | d}^{e|e|g|g}\in \bigcup_{u\in \mathds{N}_0} \Ex({}_r\Lambda_u^0).
$$
So, ${}_r\Lambda_u^0 = {}_{\tilde{r}}\Lambda_u^0$ for all~$u$.

Next we prove by induction on $u$ that ${}_r\Lambda_u^1 = {}_{\tilde{r}}\Lambda_u^1$ for all~$u$. For $u=0$ this is true, since
${}_r\Lambda_0^1 = {}_{\tilde{r}}\Lambda_0^1=\emptyset$, and for $u=1$ this is true by hypothesis. Take $u>1$ and assume that
${}_r\Lambda_{u'}^1 = {}_{\tilde{r}}\Lambda_{u'}^1$ for all $u'<u$. Consider $\lambda_{a|b|c|d}^{e|f|g|h}$ with
$$
\text{$\mathfrak{h}[a,b]+\mathfrak{h}[c,d]=u$, $\mathfrak{h}[e,f]+\mathfrak{h}[g,h]=1$, $a\hs^c\le g\le h\le b\hs^c$ and ${}^a\!c\le e \le f \le {}^a\!d$.}
$$
Assume first that $g=h$. If $g\hs^{\bar{c}}>a$ or ${}^{\bar{a}}\! e>c$, then taking $y=e$ in~\eqref{producto}, we obtain
$$
\lambda_{a|b|c|d}^{e|f|g|g}= \lambda_{a | g\hs^{\bar{c}} | c | {}^{\bar{a}}\! e}^{e|e|g|g} \lambda_{g\hs^{\bar{c}} | b | {}^{\bar{a}}\!e | d}^{e|f|g|g} = \widetilde{\lambda}_{a | g\hs^{\bar{c}} | c | {}^{\bar{a}}\! e}^{e|e|g|g} \widetilde{\lambda}_{g\hs^{\bar{c}} | b | {}^{\bar{a}}\!e | d}^{e|f|g|g}= \widetilde{\lambda}_{a|b|c|d}^{e|f|g|g},
$$
since $\mathfrak{h}[g\hs^{\bar{c}}, b] + \mathfrak{h}[{}^{\bar{a}}\!e,d]< \mathfrak{h}[a,b] + \mathfrak{h}[c,d]=u$. Else, taking $y=f$, we obtain
$$
\lambda_{a|b|c|d}^{e|f|g|g}= \lambda_{a | g\hs^{\bar{c}} | c | {}^{\bar{a}}\! f}^{e|f|g|g} \lambda_{g\hs^{\bar{c}} | b | {}^{\bar{a}}\!f | d}^{f|f|g|g} = \widetilde{\lambda}_{a | g\hs^{\bar{c}} | c | {}^{\bar{a}}\! f}^{e|f|g|g} \widetilde{\lambda}_{g\hs^{\bar{c}} | b | {}^{\bar{a}}\!f | d}^{f|f|g|g}= \widetilde{\lambda}_{a|b|c|d}^{e|f|g|g},
$$
because $b> g\hs^{\bar{c}}$ or $d> {}^{\bar{a}}\!f$, since otherwise $u= \mathfrak{h}[a,b]+\mathfrak{h}[c,d]=\mathfrak{h}[a,a]+\mathfrak{h}[e,f]=1$.

A similar argument yields $\lambda_{a|b|c|d}^{e|e|g|h}=\widetilde{\lambda}_{a|b|c|d}^{e|e|g|h}$ for $g \prec h$ and concludes the proof that
${}_r\Lambda_u^1 = {}_{\tilde{r}}\Lambda_u^1$.

Finally we prove using induction on $v$, that ${}_r\Lambda_u^v = {}_{\tilde{r}}\Lambda_u^v$ for all $v>1$ and all $u$. Take $v>1$ and assume that
${}_r\Lambda_{u}^{v'} = {}_{\tilde{r}}\Lambda_{u}^{v'}$ for all $v'<v$. Consider
$$
\lambda_{a|b|c|d}^{e|f|g|h} \quad\text{with $\mathfrak{h}[e,f]+\mathfrak{h}[g,h]=v$, $a\hs^c\le g\le h\le b\hs^c$ and ${}^a\!c\le e \le f \le {}^a\!d$.}
$$
If $e<f$ and $g<h$, then we take $y=f$ and $z=g$ in~\eqref{producto} and we obtain
$$
\lambda_{a|b|c|d}^{e|f|g|h}= \lambda_{a | g\hs^{\bar{c}} | c | {}^{\bar{a}}\! f}^{e|f|g|g} \lambda_{g\hs^{\bar{c}} | b | {}^{\bar{a}}\!f | d}^{f|f|g|h} = \widetilde{\lambda}_{a | g\hs^{\bar{c}} | c | {}^{\bar{a}}\! f}^{e|f|g|g} \widetilde{\lambda}_{g\hs^{\bar{c}} | b | {}^{\bar{a}}\!f | d}^{f|f|g|h}= \widetilde{\lambda}_{a|b|c|d}^{e|f|g|h},
$$
since $\mathfrak{h}[e,f]+\mathfrak{h}[g,g]<v$ and $\mathfrak{h}[f,f]+\mathfrak{h}[g,h]<v$.

Else $g=h$ or $e=f$. In the first case there exists $y$ with $e<y<f$, and so, by~\eqref{producto} we obtain
$$
\lambda_{a|b|c|d}^{e|f|g|g}= \lambda_{a | g\hs^{\bar{c}} | c | {}^{\bar{a}}\! y}^{e|y|g|g} \lambda_{g\hs^{\bar{c}} | b | {}^{\bar{a}}\!y | d}^{y|f|g|g} = \widetilde{\lambda}_{a | g\hs^{\bar{c}} | c | {}^{\bar{a}}\!y}^{e|y|g|g} \widetilde{\lambda}_{g\hs^{\bar{c}} | b | {}^{\bar{a}}\!y | d}^{y|f|g|g}= \widetilde{\lambda}_{a|b|c|d}^{e|f|g|g},
$$
since $\mathfrak{h}[e,y]+\mathfrak{h}[g,g]<v$ and $\mathfrak{h}[y,f]+\mathfrak{h}[g,g]<v$.
A similar argument proves the case $e=f$. By Proposition~\ref{donde puede no anularse} this concludes the proof.
\end{proof}

\begin{definition}
Let $e\le f$ and $g\le h$, and let
$$
e=y_0<\dots < y_j=f\quad\text{and}\quad g=z_0<\dots < z_k=h
$$
be maximal chains. A configuration for the two given chains is a family $(a_i)_{i=0,\dots,j+k}$ with $a_i=(\alpha_i,\beta_i)\in \mathds{N}_0^2$ such that $a_0=(0,0)$, $a_{j+k}=(j,k)$, $\alpha_i\le \alpha_{i+1}$, $\beta_i\le \beta_{i+1}$ and $\alpha_{i+1}- \alpha_{i}+\beta_{i+1}- \beta_{i}=1$ for $i=0,\dots,j+k-1$.
\end{definition}

\begin{proposition}\label{factorizacion}
Let $e\le f$, $g\le h$, $a\le b$ and $c\le d$, such that $a\hs^c\le g\le h\le b\hs^c$ and ${}^a\!c\le e \le f \le {}^a\!d$. Let $e=y_0<\dots < y_j=f$ and $g=z_0<z_1<\dots < z_k=h$ be maximal chains and let $(a_i)_{i=0,\dots,j+k}$ be a configuration for the two given chains. Then
$$
\lambda_{a|b|c|d}^{e|f|g|h}=\lambda_{a | g\hs^{\bar{c}} | c | {}^{\bar{a}}\! e}^{e|e|g|g} \lambda_{h\hs^{\bar{c}} | b | {}^{\bar{a}}\!f | d}^{f|f|h|h}\prod_{i=1}^{k+j}\lambda_i,\qquad \text{where}\quad \lambda_i= \lambda_{z_{\beta_{i-1}}\hs^{\bar{c}} | z_{\beta_{i}}\hs^{\bar{c}} | {}^{\bar{a}}\!y_{\alpha_{i-1}} | {}^{\bar{a}}\!y_{\alpha_{i}}}^{y_{\alpha_{i-1}}|y_{\alpha_{i}}|z_{\beta_{i-1}}|z_{\beta_{i}}}.
$$
\end{proposition}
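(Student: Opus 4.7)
I would proceed by induction on $n\coloneqq j+k$. The base case $n=0$ forces $e=f$ and $g=h$, the configuration reduces to $a_0=(0,0)$, the product is empty, and the formula collapses verbatim to item~1) of Examples~\ref{ejemplos de lambdas como productos}.

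For the inductive step I would split on whether the first increment $a_1-a_0$ equals $(1,0)$ or $(0,1)$; by symmetry, only the first case needs to be written out, replacing item~2) by item~3) of Examples~\ref{ejemplos de lambdas como productos} in the other case. Assuming $a_1=(1,0)$, I would first apply Proposition~\ref{lambdas como productos} at $y=y_1$, $z=g$, yielding
$$
\lambda_{a|b|c|d}^{e|f|g|h}=\lambda_{a|g\hs^{\bar c}|c|{}^{\bar a}\!y_1}^{e|y_1|g|g}\,\lambda_{g\hs^{\bar c}|b|{}^{\bar a}\!y_1|d}^{y_1|f|g|h}.
$$
Next, item~2) of Examples~\ref{ejemplos de lambdas como productos} refactors the left piece as
$$
\lambda_{a|g\hs^{\bar c}|c|{}^{\bar a}\!y_1}^{e|y_1|g|g}=\lambda_{a|g\hs^{\bar c}|c|{}^{\bar a}\!e}^{e|e|g|g}\,\lambda_{g\hs^{\bar c}|g\hs^{\bar c}|{}^{\bar a}\!e|{}^{\bar a}\!y_1}^{e|y_1|g|g},
$$
producing the prescribed leftmost factor of the statement and exactly $\lambda_1$ (since $(\alpha_0,\beta_0)=(0,0)$ and $(\alpha_1,\beta_1)=(1,0)$). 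Finally, I would apply the inductive hypothesis to the right piece with the shortened chain $y_1<\cdots<y_j$, the unchanged chain $g=z_0<\cdots<z_k=h$, and the shifted configuration $\bigl(a_{i+1}-(1,0)\bigr)_{i=0,\dots,j+k-1}$. This should produce the prescribed rightmost factor $\lambda_{h\hs^{\bar c}|b|{}^{\bar a}\!f|d}^{f|f|h|h}$, the remaining factors $\lambda_2,\dots,\lambda_{j+k}$ after a direct reindexing, and one extra leading coefficient $\lambda_{g\hs^{\bar c}|g\hs^{\bar c}|{}^{\bar a}\!y_1|{}^{\bar a}\!y_1}^{y_1|y_1|g|g}$, which I would discard as $1$ via Remark~\ref{ordenes: r sobre elementos de tipo grupo}, since $r_{|}(g\hs^{\bar c},{}^{\bar a}\!y_1)=(y_1,g)$ by the very definitions of $(-)\hs^{\bar c}$ and ${}^{\bar a}\!(-)$.

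The main technical hurdle will be justifying the inductive call: its outer parameters are $a'\coloneqq g\hs^{\bar c}$ and $c'\coloneqq {}^{\bar a}\!y_1$, and one must check both that the hypotheses $a'\hs^{c'}\le g\le h\le b'\hs^{c'}$ and ${}^{a'}c'\le y_1\le f\le {}^{a'}d'$ persist, and that the ``bar'' symbols appearing inside the sub-lambda agree with those induced by the new outer parameters. The key is Corollary~\ref{son iso de ordenes}: applying the order automorphisms $(-)\hs^{\bar c}$ and ${}^{\bar a}\!(-)$ to the original hypotheses $a\hs^c\le g$ and ${}^a c\le y_1$ gives $a\le g\hs^{\bar c}$ and $c\le {}^{\bar a}\!y_1$, so the pairs $(a,g\hs^{\bar c})$ and $(c,{}^{\bar a}\!y_1)$ each lie in a common connected component of $X$; the corollary then identifies $(-)\hs^{\bar{c'}}$ with $(-)\hs^{\bar c}$ and ${}^{\bar{a'}}\!(-)$ with ${}^{\bar a}\!(-)$ on those components. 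Once this bookkeeping is in place, the identity $\lambda'_i=\lambda_{i+1}$ between the configuration used in the inductive call and the tail of the original configuration is immediate.
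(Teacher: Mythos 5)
Your proposal is correct and follows essentially the same strategy as the paper's proof: induction on $j+k$, with Proposition~\ref{lambdas como productos} supplying the two-factor splitting, Examples~\ref{ejemplos de lambdas como productos} refactoring the boundary piece, and the group-like case (Remark~\ref{ordenes: r sobre elementos de tipo grupo}) disposing of the spurious factor equal to $1$. The only difference is cosmetic: you peel off the \emph{first} increment of the configuration while the paper peels off the \emph{last} one, and your explicit verification via Corollary~\ref{son iso de ordenes} that the new outer parameters $a'=g\hs^{\bar c}$, $c'={}^{\bar a}\!y_1$ induce the same bar-maps is a point the paper leaves implicit.
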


\begin{proof}
We proceed by induction on $k+j$. If $k+j=0$, then, by Example~\ref{ejemplos de lambdas como productos}(1),
$$
\lambda_{a|b|c|d}^{e|e|g|g}= \lambda_{a | g\hs^{\bar{c}} | c | {}^{\bar{a}}\!e}^{e|e|g|g} \lambda_{g\hs^{\bar{c}} | b | {}^{\bar{a}}\!e |
d}^{e|e|g|g}.
$$
Assume $k+j>0$ and that the proposition holds for all pair of chains with the sum of the lengths smaller than $k+j$. Necessarily $z_{k+j-1}< h$ and $y_{k+j-1}=f$ or $z_{k+j-1}= h$ and $y_{k+j-1}<f$. In the first case $(a_i)_{i=0,\dots,k+j-1}$ is a configuration for $e=y_0<\dots < y_j=f$ and $g=z_0<z_1<\dots < z_{k-1}$ and so by inductive hypothesis
$$
\lambda_{a|z_{k-1}\hs^{\bar{c}}|c|{}^{\bar{a}}\!f }^{e|f|g|z_{k-1}}=\lambda_{a | g\hs^{\bar{c}} | c | {}^{\bar{a}}\! e}^{e|e|g|g} \lambda_{z_{k-1}\hs^{\bar{c}} | z_{k-1}\hs^{\bar{c}} | {}^{\bar{a}}\!f | {}^{\bar{a}}\!f }^{f|f|z_{k-1}|z_{k-1}} \prod_{i=1}^{k+j-1} \lambda_{z_{\beta_{i-1}}\hs^{\bar{c}} | z_{\beta_{i}}\hs^{\bar{c}} | {}^{\bar{a}}\!y_{\alpha_{i-1}} | {}^{\bar{a}}\!y_{\alpha_{i}}}^{y_{\alpha_{i-1}}|y_{\alpha_{i}}|z_{\beta_{i-1}}|z_{\beta_{i}}}.
$$
Since $\lambda_{z_{k-1}\hs^{\bar{c}} | z_{k-1}\hs^{\bar{c}} | {}^{\bar{a}}\!f | {}^{\bar{a}}\!f }^{f|f|z_{k-1}|z_{k-1}}=1$,
$$
\lambda_{a|b|c|d}^{e|f|g|h}= \lambda_{a | z_{k-1}\hs^{\bar{c}} | c | {}^{\bar{a}}\! f}^{e|f|g|z_{k-1}} \lambda_{z_{k-1}\hs^{\bar{c}} | b | {}^{\bar{a}}\!f | d}^{f|f|z_{k-1}|h}\quad\text{and}\quad\lambda_{z_{k-1}\hs^{\bar{c}} | b | {}^{\bar{a}}\!f | d}^{f|f|z_{k-1}|h}=\lambda_{z_{k-1}\hs^{\bar{c}} | h\hs^{\bar{c}} | {}^{\bar{a}}\!f | {}^{\bar{a}}\!f}^{f|f|z_{k-1}|h} \lambda_{h\hs^{\bar{c}} | b | {}^{\bar{a}}\!f | d}^{f|f|h|h},
$$
the result is true in this case. If $z_{k+j-1}= h$ and $y_{k+j-1}<f$, a similar argument proves the formula and concludes the proof.
\end{proof}

\begin{corollary}\label{unicidad}
Let $e\le f$, $g\le h$, $a\le b$ and $c\le d$, such that $a\hs^c\le g\le h\le b\hs^c$ and ${}^a\!c\le e \le f \le {}^a\!d$. The product
$$
\lambda_{a | g\hs^{\bar{c}} | c | {}^{\bar{a}}\! e}^{e|e|g|g}\lambda_{h\hs^{\bar{c}} | b | {}^{\bar{a}}\!f | d}^{f|f|h|h}\prod_{i=1}^{k+j}\lambda_i
$$
in Proposition~\ref{factorizacion} does not depend neither on the maximal chains nor on the chosen configuration.
\end{corollary}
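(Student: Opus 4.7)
The proof is essentially immediate from Proposition~\ref{factorizacion}, so the plan is very short. The observation to make is that the left-hand side of the factorization formula in Proposition~\ref{factorizacion}, namely $\lambda_{a|b|c|d}^{e|f|g|h}$, depends only on the eight data $a,b,c,d,e,f,g,h$, and not on any auxiliary choice of maximal chains in $[e,f]$ and $[g,h]$ or of a lattice-path configuration for those chains. Hence the right-hand side, being equal to this fixed scalar, is also independent of such choices.

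More concretely, I would argue as follows. Fix $a\le b$, $c\le d$, $e\le f$ and $g\le h$ with $a\hs^c\le g\le h\le b\hs^c$ and ${}^a\!c\le e\le f\le{}^a\!d$. Let $(y_\bullet,z_\bullet,(a_i))$ and $(y'_\bullet,z'_\bullet,(a'_i))$ be two different choices of maximal chains together with configurations. Applying Proposition~\ref{factorizacion} twice, once to each choice, yields
\[
\lambda_{a | g\hs^{\bar{c}} | c | {}^{\bar{a}}\! e}^{e|e|g|g}\lambda_{h\hs^{\bar{c}} | b | {}^{\bar{a}}\!f | d}^{f|f|h|h}\prod_{i=1}^{k+j}\lambda_i
\;=\;\lambda_{a|b|c|d}^{e|f|g|h}\;=\;
\lambda_{a | g\hs^{\bar{c}} | c | {}^{\bar{a}}\! e}^{e|e|g|g}\lambda_{h\hs^{\bar{c}} | b | {}^{\bar{a}}\!f | d}^{f|f|h|h}\prod_{i=1}^{k'+j'}\lambda'_i,
\]
since the common value $\lambda_{a|b|c|d}^{e|f|g|h}$ on the middle is an intrinsic coefficient of $r$ as in~\eqref{def de los lambdas} and is manifestly independent of the auxiliary data. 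This gives the claimed independence.

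There is really no obstacle here; the statement is a direct corollary and is being recorded so that later in the paper one can speak of the common value of the product without specifying chains or configurations. The only thing worth double-checking is that Proposition~\ref{factorizacion} applies to both choices under the same hypotheses on $(a,b,c,d,e,f,g,h)$, which is immediate since those hypotheses do not involve the chains or the configuration at all.
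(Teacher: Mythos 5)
Your proposal is correct and coincides with the paper's own treatment: the paper gives no separate proof of Corollary~\ref{unicidad}, regarding it as immediate from Proposition~\ref{factorizacion} for exactly the reason you state, namely that every choice of maximal chains and configuration produces a product equal to the same intrinsic coefficient $\lambda_{a|b|c|d}^{e|f|g|h}$.
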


\section{Construction of non-degenerate automorphisms}\label{seccion construccion de automorfismos no degenerados}
In Section~\ref{section: Properties} we proved that each non-degenerate coalgebra automorphism $r$ of $D\ot D$ induces by restriction a non-degenerate bijection
$$
r_{|} \colon X\times X \longrightarrow X\times X
$$
and fulfills  condition~\eqref{ordenes: r preserva la counidad} and the statements of Corollary~\ref{son iso de ordenes} and Propositions~\ref{donde puede no anularse} and~\ref{lambdas como productos}. In other words $r$ satisfies~\eqref{ordenes: r preserva la counidad} and, for all $a,b,c,d,e,f,g,h\in X$,

\begin{enumerate}[itemsep=1.0ex, topsep=1.0ex, label=\arabic*)]
\item the maps ${}^a\!(-)$ and $(-)\hs^b$, defined by $({}^a\!b,a\hs^b)\coloneqq r_{|}(a,b)$, are automorphisms of orders;

\item if $a$ and $b$ belong to the same component of $X$, then ${}^a\!(-)={}^b\!(-)$ and $(-)\hs^a=(-)\hs^b$;

\item if $a\le b$, $c\le d$, $e\le f$, $g\le h$ and $\lambda_{a|b|c|d}^{e|f|g|h}\ne 0$, then $a\hs^c\le g\le h\le b\hs^c$ and ${}^a\!c\le e \le f \le {}^a\!d$;

\item if $a\le b$, $c\le d$ $e\le f$, $g\le h$, $a\hs^c\le g\le h\le b\hs^c$ and ${}^a\!c\le e \le f \le {}^a\!d$, then
\begin{equation*}
\quad\qquad \lambda^{e|f|g|h}_{a|b|c|d} = \lambda_{a | z\hs^{\bar{c}} | c | {}^{\bar{a}}\!y}^{e|y|g|z} \lambda_{z\hs^{\bar{c}}| b | {}^{\bar{a}}\!y | d}^{y|f|z|h}
\end{equation*}
for each $y,z\in X$ such that $e\le y \le f$ and $g\le z \le h$;

\end{enumerate}

\begin{remark}\label{condicion para iso} Let $F_0\subseteq F_1\subseteq F_2\subseteq \dots$ be the filtration of $D\ot D$ defined setting $F_i$ as the $K$-subspace of $D\ot D$ generated by the tensors $(a,b)\ot (c,d)$ with $\mathfrak{h}[a,b]+\mathfrak{h}[c,d]\le i$. It is clear that a linear map $r\colon D\ot D\longrightarrow D\ot D$ that satisfies item~3) preserve this filtration. Assume that $r$ induces a non-degenerate permutation $r_{|}$ on $X\times X$. We claim that $r$ is bijective if and only if $\lambda_{a|b|c|d}^{{}^a\!c|{}^a\!d| a\hs^c|b\hs^c}\in K^{\times}$ for all~$(a,b),(c,d)\in Y$. In fact, in order to prove this it is sufficient to show that the last condition holds if and only if the graded morphism $r_l$ induced by $r$ is bijective. Now using again item~3) we obtain that
$$
r_l((a,b)\ot (c,d)) = \lambda_{a|b|c|d}^{{}^a\!c|{}^a\!d|a\hs^c|b\hs^c} ({}^a\!c,{}^a\!d)\ot (a\hs^c,b\hs^c).
$$
Consequently the condition is clearly necessary. The converse follows easily using that if $(e,g) = r_{|}(a,c) = ({}^a\!c,a\hs^c)$, then $((a,h\hs^{\bar{c}}),(c,{}^{\bar{a}}\! f))$ is the unique element of $Y\times Y$ such that $r_l((a,h\hs^{\bar{c}})\ot (c,{}^{\bar{a}}\! f))$ is a multiple of $(e,f)\ot (g,h)$ by a nonzero scalar.
\end{remark}

In this section we fix a linear automorphism  $r$ of $D\ot D$, and we prove that, conversely, if $r$ induces by restriction a non-degenerate bijection
$$
r_{|} \colon X\times X \longrightarrow X\times X
$$
and satisfies condition~\eqref{ordenes: r preserva la counidad} and items~1) -- 4), then $r$ is a non-degenerate coalgebra automorphism.

\begin{lemma}\label{suma nula}
Let $a,c,f,h\in X$. If $f\ge e\coloneqq {}^a\!c$, $h\ge g\coloneqq a^c$ and $\mathfrak{h}[e,f]+\mathfrak{h}[g,h]>0$, then
$$
\sum_{(p,q)\in [g\hs^{\bar{c}},h\hs^{\bar{c}}]\times [{}^{\bar{a}}\!e, {}^{\bar{a}}\! f]} \lambda_{g\hs^{\bar{c}}|p|{}^{\bar{a}}\!e|q}^{e|e|g|g}
\lambda_{p|h\hs^{\bar{c}}|q|{}^{\bar{a}}\! f}^{f|f|h|h} = 0.
$$
\end{lemma}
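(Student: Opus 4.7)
I would proceed by induction on $n\coloneqq \mathfrak{h}[e,f]+\mathfrak{h}[g,h]\ge 1$. To streamline notation, set $b\coloneqq h\hs^{\bar c}$ and $d\coloneqq {}^{\bar a}f$: the indexing set of the sum, which I denote by $S$, is then $[a,b]\times[c,d]$, and our assumptions read $a\hs^c=g$, $b\hs^c=h$, ${}^a c=e$, ${}^a d=f$.

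For the base case $n=1$, the obvious symmetry between the two factors lets me assume $e=f$ (so $c=d$) and $g\prec h$. Then $[a,b]=\{a,b\}$ and $S$ collapses to $\lambda_{a|b|c|c}^{e|e|g|g}+\lambda_{a|b|c|c}^{e|e|h|h}$. Item~3) confines the support of $(e',g')\mapsto\lambda_{a|b|c|c}^{e'|e'|g'|g'}$ to $\{e\}\times\{g,h\}$, so~\eqref{ordenes: r preserva la counidad} applied to $\lambda_{a|b|c|c}$ yields $S=\delta_{ab}\delta_{cc}=0$ since $a\ne b$.

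For the inductive step I would introduce, for each pair $(e'',g'')\in[e,f]\times[g,h]$,
\[
T(e'',g'')\coloneqq\sum_{(p,q)\in[a,b]\times[c,d]}\lambda_{a|p|c|q}^{e''|e''|g''|g''}\lambda_{p|b|q|d}^{f|f|h|h},
\]
so that $T(e,g)=S$. Setting $a_{*}\coloneqq g''\hs^{\bar c}$ and $c_{*}\coloneqq {}^{\bar a}e''$, item~4) applied with $(y,z)=(e'',g'')$ (which is the only valid choice, since the upper interval is a point) factors the first kernel as $\lambda_{a|p|c|q}^{e''|e''|g''|g''}=\lambda_{a|a_{*}|c|c_{*}}^{e''|e''|g''|g''}\lambda_{a_{*}|p|c_{*}|q}^{e''|e''|g''|g''}$, whence $T(e'',g'')=\lambda_{a|a_{*}|c|c_{*}}^{e''|e''|g''|g''}\cdot\widetilde S(e'',g'')$, with $\widetilde S(e'',g'')$ precisely the sum of Lemma~\ref{suma nula} for the reduced parameters $(a_{*},c_{*},f,h)$; the required compatibility identities $e''={}^{a_{*}}c_{*}$, $g''=a_{*}\hs^{c_{*}}$, $b=h\hs^{\overline{c_{*}}}$ and $d={}^{\overline{a_{*}}}f$ follow from Corollary~\ref{son iso de ordenes}. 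If $(e'',g'')\notin\{(e,g),(f,h)\}$ then $0<\mathfrak{h}[e'',f]+\mathfrak{h}[g'',h]<n$, so the inductive hypothesis forces $\widetilde S(e'',g'')=0$ and hence $T(e'',g'')=0$; the two extremal cases are direct, yielding $T(e,g)=S$ and $T(f,h)=\lambda_{a|b|c|d}^{f|f|h|h}$ (the latter because the residual sum reduces to the single term $(p,q)=(b,d)$ with value $1$).

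To conclude, I would evaluate $\Sigma\coloneqq\sum_{(e'',g'')\in[e,f]\times[g,h]}T(e'',g'')$ a second way by interchanging summation: using item~3) to confine the support of $(e'',g'')\mapsto\lambda_{a|p|c|q}^{e''|e''|g''|g''}$ inside $[e,f]\times[g,h]$ and~\eqref{ordenes: r preserva la counidad} to evaluate the inner sum as $\delta_{ap}\delta_{cq}$, this yields $\Sigma=\lambda_{a|b|c|d}^{f|f|h|h}$. Comparison with the first decomposition $\Sigma=S+\lambda_{a|b|c|d}^{f|f|h|h}$ then forces $S=0$. I expect the main technical hurdle to be precisely this bookkeeping: checking that the supports coming from item~3) match exactly the intervals produced by the item~4) factorization, so that both evaluations of $\Sigma$ are unambiguous and the counit identity applies on the nose.
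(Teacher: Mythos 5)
Your proof is correct, and it rests on the same three ingredients as the paper's own argument: induction on $\mathfrak{h}[e,f]+\mathfrak{h}[g,h]$, the counit relation~\eqref{ordenes: r preserva la counidad} combined with the support restriction of item~3), and the multiplicativity of item~4) used to recognize the inner sums as smaller instances of the lemma. The organization, however, is genuinely different and somewhat cleaner. The paper substitutes the counit relation into \emph{both} factors of every non-extremal summand, obtaining the double sum~\eqref{0}, and then needs the separate cancellation identity~\eqref{1} to absorb the extremal terms and the diagonal products before induction applies; the resulting smaller instances are indexed by pairs $(i,j)\ne(e,g)$ and $(k,l)\ne(f,h)$ varying on both sides. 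You instead perturb only the superscript of the first factor and evaluate the single auxiliary sum $\Sigma=\sum_{(e'',g'')}T(e'',g'')$ twice --- once by interchanging the summations and invoking the counit relation (giving $\lambda_{a|b|c|d}^{f|f|h|h}$), once term by term (giving $S+\lambda_{a|b|c|d}^{f|f|h|h}$ plus inductively vanishing contributions) --- which dispenses with the auxiliary identity~\eqref{1} altogether. The bookkeeping you flag as the main hurdle does close up exactly as you expect: item~3) together with the fact that ${}^{a}\hf(-)$ and $(-)\hs^{c}$ are order automorphisms (items~1)--2) of Section~\ref{seccion construccion de automorfismos no degenerados}, which is what stands in for Corollary~\ref{son iso de ordenes} in this setting) confines the nonzero terms of $T(e'',g'')$ to $(p,q)\in[a_{*},b]\times[c_{*},d]$, which is precisely the index set of $\widetilde S(e'',g'')$, and on that set item~4) applies verbatim.
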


\begin{proof} For the sake of brevity we let $S$ denote the sum at the left hand of the above equality. We will proceed by induction on $N\coloneqq \mathfrak{h}[e,f]+\mathfrak{h}[g,h]$. So assume that the assertion holds when $\mathfrak{h}[e,f]+\mathfrak{h}[g,h]<N$. By Remark~\ref{ordenes: r sobre elementos de tipo grupo}
$$
S = \lambda_{g\hs^{\bar{c}}|h\hs^{ \bar{c}}|{}^{\bar{a}}\! e|{}^{\bar{a}}\! f}^{e|e|g|g} \, + \,\lambda_{g\hs^{\bar c}|h\hs^{\bar c}|{}^{\bar a}\! e|{}^{\bar a}\! f}^{f|f|h|h}\, + \sum_{\substack{(p,q)\in [g\hs^{\bar{c}},h\hs^{\bar{c}}]\times [{}^{\bar{a}}\!e, {}^{\bar{a}}\! f]\\ (p,q)\notin\{ (g\hs^{\bar c},{}^{\bar a}\! e),(h\hs^{\bar c},{}^{\bar a}\! f)\}}} \lambda_{g\hs^{\bar c}|p|{}^{\bar a}\! e|q}^{e|e|g|g} \lambda_{p|h\hs^{\bar c}|q|{}^{\bar a}\! f}^{f|f|h|h}.
$$
But, by condition~\eqref{ordenes: r preserva la counidad} and Statement~3), we have
$$
\lambda_{g\hs^{\bar{c}}|p|{}^{\bar{a}}\! e|q}^{e|e|g|g}=-\!\sum_{\substack{(i,j)\in [e,{}^a\!q] \times [g,p\hs^c]\\ (i,j)\ne(e,g)}}
\!\!\lambda_{g\hs^{\bar{c}}|p|{}^{\bar{a}}\!\! e|q}^{i|i|j|j}\quad\text{ and }\quad
\lambda_{p|h\hs^{\bar{c}}|q|{}^{\bar{a}}\! f}^{f|f|h|h}=-\!\!\sum_{\substack{(k,l)\in [{}^a\!q,f] \times [p^c,h]\\ (k,l)\ne(f,h)}}
\!\!\lambda_{p|h\hs^{\bar{c}}|q|{}^{\bar{a}}\!f}^{k|k|l|l},
$$
and so
\begin{equation}\label{0}
S=\lambda_{g\hs^{\bar c}|h\hs^{\bar c}|{}^{\bar a}\! e|{}^{\bar a}\! f}^{e|e|g|g}\, + \, \lambda_{g\hs^{\bar c}|h\hs^{\bar c}|{}^{\bar a}\! e|{}^{\bar a}\! f}^{f|f|h|h}\, + \sum_{\substack{i,j,p,q,k,l \in A \\ (i,j)\ne (e,g) , (k,l)\ne(f,h)}}  \lambda_{g\hs^{\bar{c}}|p|{}^{\bar a}\! e|q}^{i|i|j|j} \lambda_{p|h\hs^{\bar c}|q|{}^{\bar a}\! f}^{k|k|l|l},
\end{equation}
where $A\coloneqq \{i,j,p,q,k,l: g\le j \le p\hs^c \le l \le h \text{ and } e\le i \le {}^a\!q \le k \le f\}$. On the other hand
\begin{equation}\label{1}
\lambda_{g\hs^{\bar{c}}|h\hs^{ \bar{c}}|{}^{\bar{a}}\! e|{}^{\bar{a}}\! f}^{e|e|g|g} \, + \,\lambda_{g\hs^{\bar c}|h\hs^{\bar c}|{}^{\bar a}\! e|{}^{\bar a}\! f}^{f|f|h|h} \, + \, \sum_{\substack{(p,q)\in [g\hs^{\bar{c}},h\hs^{\bar{c}}]\times [{}^{\bar{a}}\!e, {}^{\bar{a}}\! f]\\ (p,q)\notin\{  (g\hs^{\bar c},{}^{\bar a}\! e),(h\hs^{\bar c},{}^{\bar a}\! f)\}}} \!\!\lambda_{g\hs^{\bar{c}}|p|{}^{\bar a}\!e|q}^{{}^a\!q|{}^a\!q|p\hs^c|p\hs^c} \lambda_{p|h\hs^{\bar c}|q|{}^{\bar a}\! f}^{{}^a\!q|{}^a\!q|p\hs^c|p\hs^c} = 0,
\end{equation}
since by condition~\eqref{ordenes: r preserva la counidad} and Statements~3) and 4),
$$
\lambda_{g\hs^{\bar{c}}|p|{}^{\bar{a}}\! e|q}^{{}^a\!q|{}^a\!q|p\hs^c|p\hs^c} \lambda_{p|h\hs^{\bar{c}}|q|{}^{\bar{a}}\!f}^{{}^a\!q|{}^a\!q|p\hs^c|p\hs^c} =\lambda_{g\hs^{\bar{c}}|h\hs^{\bar{c}}|{}^{\bar{a}}\!e|{}^{\bar{a}}\! f}^{{}^a\!q|{}^a\!q|p\hs^c|p\hs^c} \quad\text{and}\quad \sum_{\substack{\{p:g\hs^{\bar c}  \le p \le   h\hs^{\bar c}\}\\ \{q:{}^{\bar a}\! e\le  q \le {}^{\bar a}\! f\}}} \lambda_{g\hs^{\bar c}|h\hs^{\bar c}|{}^{\bar a}\!e|{}^{\bar a}\! f}^{^aq|^aq|p^c|p^c}=0.
$$
Combining equalities~\eqref{0} and~\eqref{1}, we obtain
\begin{align*}
S &=  \sum_{\substack{i,j,p,q,k,l \in A \\ (i,j)\ne (e,g) , (k,l)\ne(f,h) \\ \mathfrak{h}[i,k]+\mathfrak{h}[j,l]>0}}\!\! \lambda_{g\hs^{\bar{c}}|p|{}^{\bar{a}}\! e|q}^{i|i|j|j} \lambda_{p|h\hs^{\bar{c}}|q|{}^{\bar a}\! f}^{k|k|l|l}\\[3pt]
& = \sum_{\substack{\{j,l:g \le j  \le l\le h\}\\ \{i,k: e\le i\le k \le f\}\\ (i,j)\ne (e,g) , (k,l)\ne(f,h)\\ \mathfrak{h}[i,k]+\mathfrak{h}[j,l]>0}} \sum_{\substack{p\in [j\hs^{\bar c}, l\hs^{\bar c}]\\ q\in [{}^{\bar a}\!i, {}^{\bar a}\!k]}}
\lambda_{g\hs^{\bar{c}}|p|{}^{\bar{a}}\!e|q}^{i|i|j|j} \lambda_{p|h\hs^{\bar{c}}|q|{}^{\bar{a}}\!f}^{k|k|l|l}.
\end{align*}
But, since $(i,j)\ne ( e,g)$ and $(k,l)\ne( f, h)$ imply
$$
\mathfrak{h}[i,k]+\mathfrak{h}[j,l]< \mathfrak{h}[e,f]+\mathfrak{h}[g,h]=N,
$$
by the inductive hypothesis $S=0$, as desired.
\end{proof}

\begin{lemma}\label{suma nula mas general}
The map $r$ satisfies condition~\eqref{eq2}.
\end{lemma}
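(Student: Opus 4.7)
The plan is to reduce the general identity~\eqref{eq2} to the ``diagonal-top'' special case already established in Lemma~\ref{suma nula}. The bridge will be Proposition~\ref{lambdas como productos}, used to factor each of the two lambdas into a piece whose upper index has the diagonal form $y|y|z|z$ (or $y'|y'|z'|z'$) and a piece independent of the summation variables $(p,q)$.

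First I would trim the summation range. By Proposition~\ref{donde puede no anularse} together with Corollary~\ref{son iso de ordenes} (which lets me replace ${}^p$ by ${}^a$ and $\hs^q$ by $\hs^c$ whenever $p\in[a,b]$ and $q\in[c,d]$), each nonzero term forces $y\le {}^a\! q\le y'$ and $z\le p\hs^c\le z'$. In particular, if $y>y'$ or $z>z'$ the identity is trivial; otherwise the sum effectively runs only over $(p,q)\in [z\hs^{\bar c},z'\hs^{\bar c}]\times [{}^{\bar a}\!y,{}^{\bar a}\!y']$, which is a subset of $[a,b]\times[c,d]$ by the remaining constraints of Proposition~\ref{donde puede no anularse}.

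Next I would apply Proposition~\ref{lambdas como productos} to each surviving term, splitting the first lambda at $(y,z)$ and the second at $(y',z')$:
$$
\lambda_{a|p|c|q}^{e|y|g|z} = \lambda_{a|z\hs^{\bar c}|c|{}^{\bar a}\!y}^{e|y|g|z}\,\lambda_{z\hs^{\bar c}|p|{}^{\bar a}\!y|q}^{y|y|z|z},\qquad \lambda_{p|b|q|d}^{y'|f|z'|h} = \lambda_{p|z'\hs^{\bar c}|q|{}^{\bar a}\!y'}^{y'|y'|z'|z'}\,\lambda_{z'\hs^{\bar c}|b|{}^{\bar a}\!y'|d}^{y'|f|z'|h},
$$
where I have once again used Corollary~\ref{son iso de ordenes} to rewrite ${}^{\bar p}$, $\hs^{\bar q}$ as ${}^{\bar a}$, $\hs^{\bar c}$. (For $(p,q)$ violating the hypotheses of Proposition~\ref{lambdas como productos}, both sides vanish by Proposition~\ref{donde puede no anularse}, so the factorisation still holds.) The first and last factors do not involve $(p,q)$, so they pull out of the sum and the claim reduces to
$$
S\coloneqq \!\!\!\sum_{(p,q)\in [z\hs^{\bar c},z'\hs^{\bar c}]\times [{}^{\bar a}\!y,{}^{\bar a}\!y']}\!\!\! \lambda_{z\hs^{\bar c}|p|{}^{\bar a}\!y|q}^{y|y|z|z}\,\lambda_{p|z'\hs^{\bar c}|q|{}^{\bar a}\!y'}^{y'|y'|z'|z'} \;=\; 0.
$$

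Finally I would recognise $S$ as an instance of Lemma~\ref{suma nula}: setting its parameters to $a\leftarrow z\hs^{\bar c}$, $c\leftarrow {}^{\bar a}\!y$, $f\leftarrow y'$, $h\leftarrow z'$, Corollary~\ref{son iso de ordenes} gives ${}^{z\hs^{\bar c}}({}^{\bar a}\!y)=y$ and $(z\hs^{\bar c})\hs^{{}^{\bar a}\!y}=z$, so the diagonal tops $y|y|z|z$ and $y'|y'|z'|z'$ have exactly the shape demanded there, the summation ranges match, and the height hypothesis $\mathfrak{h}[y,y']+\mathfrak{h}[z,z']>0$ coincides with the assumption $(y,z)\ne(y',z')$. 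Hence Lemma~\ref{suma nula} gives $S=0$, proving~\eqref{eq2}. The one mildly delicate point in the whole argument is the bookkeeping with the inverse maps ${}^{\bar{(-)}}$ and $\hs^{\bar{(-)}}$ required to line $S$ up against Lemma~\ref{suma nula}; once that identification is in place everything else is a direct substitution.
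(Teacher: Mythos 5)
Your proposal is correct and follows essentially the same route as the paper's own proof: reduce the summation range via Proposition~\ref{donde puede no anularse} and Corollary~\ref{son iso de ordenes}, factor each coefficient with Proposition~\ref{lambdas como productos} so that the $(p,q)$-independent pieces pull out, and apply Lemma~\ref{suma nula} to the remaining sum. The bookkeeping with ${}^{\bar a}$ and $\hs^{\bar c}$ that you flag is handled in the paper exactly as you describe.
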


\begin{proof}
By conditions~2) and~3) we know that if $\lambda_{a|p|c|q}^{e|y|g|z} \lambda_{p|b|q|d}^{y'|f|z'|g} \ne 0$, then
$$
{}^a\!c \le e \le y \le {}^a\!q \le y' \le f \le  {}^a\!d \quad \text{and}\quad  a\hs^c \le g \le z \le p\hs^c \le z'\le g \le b\hs^c.
$$
So, we are reduced to prove that for each $a,b,c,d,e,f,g,h,y,y',z,z'\in X$ such that ${}^a\!c \le e \le y \le y' \le f \le  {}^a\!d$, $a\hs^c \le g \le z \le z'\le g \le b\hs^c$ and $(y,z) \ne (y',z')$.
$$
\sum_{(p,q)\in [z\hs^{\bar{c}},z'\hs^{\bar{c}}]\times [{}^{\bar{a}}\!y,{}^{\bar{a}}\!y']} \lambda_{a|p|c|q}^{e|y|g|z} \lambda_{p|b|q|d}^{y'|f|z'|h}=0.
$$
But by condition~4), we have
$$
\lambda_{a|p|c|q}^{e|y|g|z} = \lambda_{a | z\hs^{\bar{c}} | c | {}^{\bar{a}}\!y}^{e|y|g|z} \lambda_{z\hs^{\bar{c}}| p | {}^{\bar{a}}\!y | q}^{y|y|z|z}
\qquad\text{and}\qquad \lambda_{p|b|q|d}^{y'|f|z'|h} = \lambda_{p | z'\hs^{\bar{c}} | q | {}^{\bar{a}}\!y'}^{y'|y'|z'|z'} \lambda_{z'\hs^{\bar{c}}| b | {}^{\bar{a}}\!y' | d}^{y'|f|z'|h},
$$
and therefore it suffices to check that
$$
\sum_{(p,q)\in [z\hs^{\bar{c}},z'\hs^{\bar{c}}]\times [{}^{\bar{a}}\!y,{}^{\bar{a}}\!y']} \lambda_{z\hs^{\bar{c}}| p | {}^{\bar{a}}\!y | q}^{y|y|z|z}   \lambda_{p | z'\hs^{\bar{c}} | q | {}^{\bar{a}}\!y'}^{y'|y'|z'|z'}=0,
$$
which is true by Lemma~\ref{suma nula}.
\end{proof}

\begin{theorem}\label{principal para no degenerados}
Let $r \colon D\ot D \longrightarrow D\ot D$ be a linear map that induces by restriction a non-degenerate bijection $r_{|} \colon X\times X \longrightarrow X\times X$. Let
$$
\bigl(\lambda_{a|b|c|d}^{e|f|g|h}\bigr)_{(a,b),(c,d),(e,f),(g,h)\in Y}
$$
be as in the discussion above Remark~\ref{compatibilidad con epsilon}, and for each $a,c\in X$ let ${}^a \hf (-)$ and $(-)\hs^c$ be the maps introduced in Notation~\ref{coordenadas conjuntistas}. If $r$ satisfies items~1) -- 4) at the beginning of the section, condition~\eqref{ordenes: r preserva la counidad} and $\lambda_{a|b|c|d}^{{}^a\!c|{}^a\!d| a\hs^c|b\hs^c}\in K^{\times}$ for all~$(a,b),(c,d)\in Y$, then $r$ is a non-degenerate coalgebra automorphism.
\end{theorem}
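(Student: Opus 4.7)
The overall plan is to verify, in order, that $r$ is bijective, a coalgebra morphism, and non-degenerate. Bijectivity is immediate from Remark~\ref{condicion para iso} and the assumption $\lambda_{a|b|c|d}^{{}^a\!c|{}^a\!d|a\hs^c|b\hs^c}\in K^{\times}$.

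For the coalgebra morphism part, Remark~\ref{compatibilidad con epsilon} reduces the task to verifying~\eqref{ordenes: r preserva la counidad},~\eqref{eq1} and~\eqref{eq2}. Condition~\eqref{ordenes: r preserva la counidad} is part of the hypothesis, while Lemma~\ref{suma nula mas general} supplies~\eqref{eq2}, so the work concentrates on~\eqref{eq1}. My plan is to show that the sum on its left hand side collapses to a single term. Indeed, if the first factor $\lambda_{a|p|c|q}^{e|y|g|z}$ is non-zero, then item~3) forces $z\le p\hs^c$ and $y\le {}^a\!q$; for the second factor $\lambda_{p|b|q|d}^{y|f|z|h}$, after using item~2) to rewrite $p\hs^q=p\hs^c$ and ${}^p\!q={}^a\!q$, item~3) forces the reverse inequalities $p\hs^c\le z$ and ${}^a\!q\le y$. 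The bijectivity of ${}^a\!(-)$ and $(-)\hs^c$ given by item~1) then pins down $p=z\hs^{\bar c}$ and $q={}^{\bar a}\!y$, so the sum equals the single product supplied by item~4), which is exactly $\lambda_{a|b|c|d}^{e|f|g|h}$. If one of the inclusions $a\hs^c\le g\le h\le b\hs^c$ or ${}^a\!c\le e\le f\le {}^a\!d$ fails, both sides vanish by item~3).

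The main obstacle is non-degeneracy, which I plan to handle via a filtration argument. Let $F_\bullet$ denote the filtration of $D\ot D$ from Remark~\ref{condicion para iso}. Since ${}^y\!(-)$ and $(-)\hs^y$ are order automorphisms (item~1)) and hence preserve the height of intervals, item~3) implies that both
\[
\Phi\coloneqq (D\ot \sigma)\circ (\Delta\ot D)\quad\text{and}\quad \Psi\coloneqq (\tau\ot D)\circ (D\ot \Delta)
\]
respect $F_\bullet$. Expanding $\Phi((a,b)\ot (c,d))=\sum_{y\in [a,b]}(a,y)\ot \sigma((y,b)\ot (c,d))$ via Remark~\ref{formulas para sigma y tau} and item~3), the only summand attaining the maximal index $i\coloneqq \mathfrak{h}[a,b]+\mathfrak{h}[c,d]$ comes from $y=b$ together with the extremal choice $e={}^b\!c$, $f={}^b\!d$, producing
\[
\Phi((a,b)\ot (c,d))\equiv \lambda_{b|b|c|d}^{{}^b\!c|{}^b\!d|b\hs^c|b\hs^c}\,(a,b)\ot ({}^b\!c,{}^b\!d) \pmod{F_{i-1}}.
\]
The leading coefficient is non-zero by the hypothesis evaluated at $a=b$, and the induced map on basis elements $(a,b)\ot(c,d)\mapsto (a,b)\ot({}^b\!c,{}^b\!d)$ is a bijection by item~1), so the associated graded of $\Phi$ is an isomorphism; a routine induction on $i$ then shows that $\Phi$ itself is an isomorphism. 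A symmetric analysis of $\Psi$, isolating the top contribution at $z=c$, $g=a\hs^c$, $h=b\hs^c$ with leading coefficient $\lambda_{a|b|c|c}^{{}^a\!c|{}^a\!c|a\hs^c|b\hs^c}$ (the hypothesis at $d=c$), shows that $\Psi$ is likewise an isomorphism. The characterisation of non-degeneracy recalled immediately after the definition then yields the claim.
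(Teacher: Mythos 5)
Your proposal is correct and follows essentially the same route as the paper: bijectivity via Remark~\ref{condicion para iso}, condition~\eqref{eq1} by collapsing the sum to the single term $(p,q)=(z\hs^{\bar c},{}^{\bar a}\!y)$ as in Proposition~\ref{lambdas como productos}, condition~\eqref{eq2} from Lemma~\ref{suma nula mas general}, and non-degeneracy by showing the graded maps associated with $(D\ot\sigma)\circ(\Delta\ot D)$ and $(\tau\ot D)\circ(D\ot\Delta)$ are isomorphisms with the hypothesized leading coefficients. You merely spell out details the paper leaves implicit.
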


\begin{proof} By Remark~\ref{condicion para iso} we know that the map $r$ is bijective. By hypothesis $r$ satisfies condition~\eqref{ordenes: r preserva la counidad}, and using items~2), 3) and 4), and arguing as in the proof of Proposition~\ref{lambdas como productos}, we obtain that $r$ satisfies condition~\eqref{eq1}. Moreover by Lemma~\ref{suma nula mas general} we know that $r$ also satisfies condition~\eqref{eq2}. Hence $r$ is a coalgebra automorphism and it only remains to check that it is non-degenerate. Let $G_l$ be the graded map induced by $(D \ot \sigma) \circ (\Delta \ot D)$. In order to prove that $(D \ot \sigma) \circ (\Delta \ot D)$ is invertible it suffices to show that so is $G_l$.  Let $a,b,c,d\in x$ with $a\le b$ and $c\le d$. A direct computation (using item~3)) shows that
$$
G_l((a,b) \ot (c,d)) = \lambda_{b|b|c|d}^{{}^a\!c | {}^a\!d | b\hs^c | b\hs^c} (a,b) \ot ({}^a\!c, {}^a\!d).
$$
So $G_l$ is invertible, because $\lambda_{b|b|c|d}^{{}^a\!c | {}^a\!d | b\hs^c | b\hs^c}\ne 0$ by hypothesis. A similar computation shows that the map $(\tau \ot D) \circ (D \ot \Delta)$ is also bijective and finishes the proof.
\end{proof}


\section{Non-degenerate solutions on incidence coalgebras}\label{section Non-degenerate solutions of the braid equation on coalgebras of orders}
In this section we assume that $r\colon D\ot D \longrightarrow D\ot D$ is a non-degenerate coalgebra automorphism that induces a non-degenerate  solution $r_{|}\colon X\times X \longrightarrow X \times X$ of the braid equation, and we determine necessary and sufficient conditions for $r$ to be a solution of the braid equation.

\begin{notations}
For all $a,b,c\in X$ in this section we set  $a\hs^b\hs^c \coloneqq (a\hs^b)\hs^c $, ${}^a\hf {}^b\hf c \coloneqq {}^a\hf ({}^b\hf c)$, ${}^{a\hs^b}\!c \coloneqq {}^{(a\hs^b)}\hf c$, $ a\hs^{{}^b\hf c}\coloneqq  a\hs^{({}^b\hf c)}$, ${}^a\hf b\hs^{{}^{a\hs^b}\hf c} \coloneqq ({}^a\hf b)\hs^{({}^{(a\hs^b)}\hf c)}$ and ${}^{c\hs^{{}^b\hf a}}\hf {b\hs^a} \coloneqq {}^{(c\hs^{({}^b\hf a)})}\hf (b\hs^a)$.
\end{notations}

\begin{remark}\label{sin mencion}
It is well known and easy to check that a permutation $(a,b) \mapsto ({}^a\hf b, a\hs^b)$ of $X\times X$
is a solution of the braid equation if and only if
$$
{}^a\hf {}^b\hf c = {}^{{}^a\hf b}\hspace{0.5pt}{}^{a\hs^b}\hf c,\quad a\hs^b\hs^c = a\hs^{{}^b\hf c} \hspace{0.5pt} {}^{b\hs^c}\quad\text{and}\quad  {}^{a\hs^{{}^b\hf c}}\hf {b\hs^c} = {}^a\hf b\hs^{{}^{a\hs^b}\hf c}\quad\text{for all $a,b,c\in X$.}
$$
\end{remark}

\begin{proposition}\label{condicion para braided}
The map $r$ is a solution of the braid equation if and only if for each family of six closed intervals $[a,b]$, $[c,d]$, $[e,f]$, $[g,h]$, $[i,j]$ and $[k,l]$, with
$$
[g,h] \subseteq [a,b],\quad [i,j] \subseteq [c,d]\quad\text{and}\quad [k,l] \subseteq [e,f],
$$
the following equality holds:
\begin{equation}
\begin{split}\label{eq braided}
\sum_{\substack{x\in [a,g] \\ y\in [h,b]}} \sum_{\substack{w\in [c,i] \\ z\in [j,d]}} & \sum_{\substack{u\in [e,k] \\ v\in [l,f]}} \lambda_{a|b|c|d}^{{}^a\!w | {}^a\!z | x\hs^c | y\hs^c} \lambda_{x\hs^c|y\hs^c|e|f}^{{}^{x\hs^c}\hf u | {}^{x\hs^c}\hf v  | g\hs^c\hs^e | h\hs^c\hs^e} \lambda_{{}^a\hf w|{}^a\hf z|{}^{x\hs^c}\hf u| {}^{x\hs^c}\hf v}^{{}^a\hf {}^c\hf k | {}^a\hf {}^c\hf l  |
{}^a\hf i\hs^{{}^{a\hs^i}\hf e}|{}^a\hf j\hs^{{}^{a\hs^j}\hf e}}\\
&= \sum_{\substack{x\in [a,g] \\ y\in [h,b]}} \sum_{\substack{w\in [c,i] \\ z\in [j,d]}} \sum_{\substack{u\in [e,k] \\ v\in [l,f]}} \lambda_{c|d|e|f}^{{}^c\!u | {}^c\!v | w\hs^e | z\hs^e}  \lambda_{a|b|{}^c\hf u|{}^c\hf v}^{{}^a\hf {}^c\hf k | {}^a\hf {}^c\hf l |  x\hs^{{}^c\hf u} | y\hs^{{}^c\hf u}} \lambda_{x\hs^{{}^c\hf u} | y\hs^{{}^c\hf u}|w\hs^e | z\hs^e}^{{}^{a\hs^{{}^i\hf e}}\hf {i\hs^e}| {}^{a\hs^{{}^j\hf e}}\hf {j\hs^e} | g\hs^c\hs^e | h\hs^c\hs^e}.
\end{split}
\end{equation}
\end{proposition}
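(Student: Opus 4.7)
The plan is to apply both sides of the braid equation~\eqref{ec de trenzas} to a generic basis element $(a,b)\ot (c,d)\ot (e,f)$ of $D^{\otimes 3}$ and extract the coefficient of each basis tensor on the triple tensor product. Using three times the expansion formula from the corollary immediately following Proposition~\ref{donde puede no anularse}, namely
\[
r((A,B)\ot (C,D)) = \sum_{\substack{[P,Q]\subseteq[A,B]\\ [R,S]\subseteq[C,D]}} \lambda_{A|B|C|D}^{{}^A\! R|{}^A\! S|P\hs^C|Q\hs^C}\,({}^A\! R,{}^A\! S)\ot (P\hs^C,Q\hs^C),
\]
I would expand $r_{12}\circ r_{23}\circ r_{12}((a,b)\ot(c,d)\ot(e,f))$ step by step as a triple sum over the subintervals chosen at each of the three applications of~$r$, and analogously for $r_{23}\circ r_{12}\circ r_{23}$.

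Next I would argue that both expansions are supported on basis tensors whose endpoints, at the set-theoretic level, coincide. Two ingredients enter: Proposition~\ref{donde puede no anularse} controls where the coefficients may be nonzero, and Remark~\ref{sin mencion} (the set-theoretic braid identities for $r_{|}$) ensures that, on the group-like part, the two iterated images produce the same triple. This makes it possible to parametrize the target basis tensor uniformly by three subintervals $[g,h]\subseteq[a,b]$, $[i,j]\subseteq[c,d]$ and $[k,l]\subseteq[e,f]$; concretely, using repeatedly Remark~\ref{sin mencion} and the component-invariance from Corollary~\ref{son iso de ordenes}, the target triple on both sides can be written as
\[
({}^a\hf{}^c\hf k,{}^a\hf{}^c\hf l)\ot ({}^a\hf i\hs^{{}^{a\hs^i}\hf e}, {}^a\hf j\hs^{{}^{a\hs^j}\hf e})\ot (g\hs^c\hs^e, h\hs^c\hs^e).
\]
The six summation variables $x,y,w,z,u,v$ in~\eqref{eq braided} then record precisely the admissible intermediate endpoints arising from the first and second applications of $r$ on each side: the containment conditions $x\in[a,g]$, $y\in[h,b]$, etc., express that each intermediate subinterval must contain the appropriate one of $[g,h]$, $[i,j]$, $[k,l]$ in order to produce the chosen target.

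Equating the coefficients of this basis tensor on the two sides yields exactly~\eqref{eq braided}; conversely, if~\eqref{eq braided} holds for every admissible choice of the six intervals, then all coefficients agree, so $r$ satisfies the braid equation. The main technical obstacle is the index bookkeeping: every composition of the maps ${}^{(-)}\hf(-)$ and $(-)\hs^{(-)}$ appearing along the way must be reduced, via the identities of Remark~\ref{sin mencion} and Corollary~\ref{son iso de ordenes}, to the canonical form displayed in~\eqref{eq braided}, and one has to verify that the ranges of summation match on both sides after this rewriting.
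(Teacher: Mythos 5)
Your proposal is correct and follows essentially the same route as the paper: expand both iterated compositions on a basis tensor $(a,b)\ot(c,d)\ot(e,f)$ via the coefficient formula coming from Proposition~\ref{donde puede no anularse}, use Corollary~\ref{son iso de ordenes} and Remark~\ref{sin mencion} to put the target basis tensors on both sides into the common canonical form, and equate coefficients. The index bookkeeping you flag as the main obstacle is exactly what the paper's displayed chain of identities carries out.
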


\begin{proof}
A direct computation using that $r$ induces a solution $r_{|}$ of the set theoretic braid equation on $X\times X$, Proposition~\ref{donde puede no anularse} and Corollary~\ref{son iso de ordenes}, shows that
\begin{align*}
&(r \ot  D) \circ (D\ot r)\circ (r\ot D) ((a,b)\ot (c,d)\ot (e,f))\\[3pt]
& = \sum_{\substack{[x,y]\subseteq [a,b]\\ [w,z]\subseteq [c,d]}} \lambda_{a|b|c|d}^{{}^a\!w | {}^a\!z | x\hs^c | y\hs^c} (r\ot D) \circ (D\ot r) \bigl(({}^a\!w,{}^a\!z) \ot ( x\hs^c,y\hs^c) \ot (e,f)\bigr)\\[3pt]
& = \sum_{\substack{[x,y]\subseteq [a,b]\\ [w,z]\subseteq [c,d]}} \sum_{\substack{[g,h]\subseteq [x,y]\\ [u,v]\subseteq [e,f]}} \! \lambda_{a|b|c|d}^{{}^a\!w | {}^a\!z | x\hs^c | y\hs^c} \lambda_{x\hs^c|y\hs^c|e|f}^{{}^{x\hs^c}\!\hf u | {}^{x\hs^c}\!\hf v  | g\hs^c\hs^e | h\hs^c\hs^e} (r\ot D)\bigl(({}^a\!w,{}^a\!z) \ot ({}^{x\hs^c}\!\hf u , {}^{x\hs^c}\!\hf v) \ot (g\hs^c\hs^e , h\hs^c\hs^e)\bigr) \\[3pt]
& =  \sum_{\substack{[x,y]\subseteq [a,b]\\ [w,z]\subseteq [c,d]}} \sum_{\substack{[g,h]\subseteq [x,y]\\ [u,v]\subseteq [e,f]}} \sum_{\substack{[i,j]\subseteq [w,z]\\ [k,l]\subseteq [u,v]}} \lambda_{a|b|c|d}^{{}^a\!w | {}^a\!z | x\hs^c | y\hs^c} \lambda_{x\hs^c|y\hs^c|e|f}^{{}^{x\hs^c}\hf u | {}^{x\hs^c}\hf v  | g\hs^c\hs^e | h\hs^c\hs^e} \lambda_{{}^a\hf w|{}^a\hf z|{}^{x\hs^c}\hf u| {}^{x\hs^c}\hf v}^{{}^{{}^a\hf w}\hspace{0.5pt}{}^{x\hs^c}\hf k | {}^{{}^a\hf w}\hspace{0.5pt}{}^{x\hs^c}\hf l | {}^a\hf i\hs^{{}^{x\hs^c}\hf u} | {}^a\hf j \hs^{{}^{x\hs^c}\hf u}}\\[-5pt]
&\phantom{\sum_{\substack{[x,y]\subseteq [a,b]\\ [w,z]\subseteq [c,d]}} \sum_{\substack{[k,l]\subseteq [x,y]\\ [u,v]\subseteq [e,f]}} \sum_{\substack{[i,j]\subseteq [w,z]\\ [k,l]\subseteq [u,v]}}\lambda_{a|b|c|d}^{{}^a\!w | {}^a\!z | x\hs^c | y\hs^c}}\quad
\times ({}^{{}^a\hf w}\hspace{0.5pt}{}^{x\hs^c}\hf k , {}^{{}^a\hf w}\hspace{0.5pt}{}^{x\hs^c}\hf l)
\ot ({}^a\hf i\hs^{{}^{x\hs^c}\hf u} , {}^a\hf j \hs^{{}^{x\hs^c}\hf u}) \ot (g\hs^c\hs^e ,h\hs^c\hs^e)
\shortintertext{and}
& (D \ot r) \circ (r\ot D)\circ (D\ot r) ((a,b)\ot (c,d)\ot (e,f))\\[3pt]
&  = \sum_{\substack{[w,z]\subseteq [c,d]\\ [u,v]\subseteq [e,f]}} \lambda_{c|d|e|f}^{{}^c\!u | {}^c\!v | w\hs^e | z\hs^e} (D\ot r)\circ (r\ot D) \bigl((a,b)\ot ({}^c\!u , {}^c\!v)\ot (w\hs^e , z\hs^e)\bigr)\\[3pt]
&  = \sum_{\substack{[w,z]\subseteq [c,d]\\ [u,v]\subseteq [e,f]}} \sum_{\substack{[x,y]\subseteq [a,b]\\ [k,l]\subseteq [u,v]}} \! \lambda_{c|d|e|f}^{{}^c\!u | {}^c\!v | w\hs^e | z\hs^e}  \lambda_{a|b|{}^c\hf u|{}^c\hf v}^{{}^a\hf {}^c\hf k | {}^a\hf {}^c\hf l |  x\hs^{{}^c\hf u} | y\hs^{{}^c\hf u}}\! (D\ot r)\bigl(({}^a\hf {}^c\hf k , {}^a\hf {}^c\hf l) \ot (x\hs \hs^{{}^c\hf u} , y\hs \hs^{{}^c\hf u}) \ot (w\hs^e , z\hs^e) \hspace{-0.5pt}\bigr)\\[3pt]
& = \sum_{\substack{[w,z]\subseteq [c,d]\\ [u,v]\subseteq [e,f]}} \sum_{\substack{[x,y]\subseteq [a,b]\\ [k,l]\subseteq [u,v]}}  \sum_{\substack{[g,h]\subseteq [x,y]\\ [i,j]\subseteq [w,z]}} \lambda_{c|d|e|f}^{{}^c\!u | {}^c\!v | w\hs^e | z\hs^e}  \lambda_{a|b|{}^c\hf u|{}^c\hf v}^{{}^a\hf {}^c\hf k | {}^a\hf {}^c\hf l |  x\hs^{{}^c\hf u} | y\hs^{{}^c\hf u}} \lambda_{x\hs^{{}^c\hf u} | y\hs^{{}^c\hf u}|w\hs^e | z\hs^e}^{{}^{x\hs^{{}^c\hf u}}\hf {i\hs^e}| {}^{x\hs^{{}^c\hf u}}\hf {j\hs^e} | g\hs^{{}^c\hf u} \hspace{0.5pt} {}^{w\hs^e} | h\hs^{{}^c\hf u} \hspace{0.5pt} {}^{w\hs^e}} \\[-5pt]
&\phantom{\sum_{\substack{[x,y]\subseteq [a,b]\\ [w,z]\subseteq [c,d]}} \sum_{\substack{[g,h]\subseteq [x,y]\\ [u,v]\subseteq [e,f]}} \sum_{\substack{[i,j]\subseteq [w,z]\\ [k,l]\subseteq [u,v]}}\lambda_{a|b|c|d}^{{}^a\!w | {}^a\!z | x\hs^c | y\hs^c}}\quad\times ({}^a\hf {}^c\hf k , {}^a\hf {}^c\hf l) \ot ( {}^{x\hs^{{}^c\hf u}}\hf {i\hs^e}, {}^{x\hs^{{}^c\hf u}}\hf {j\hs^e}) \ot (g\hs^{{}^c\hf u} \hspace{0.5pt} {}^{w\hs^e} , h\hs^{{}^c\hf u} \hspace{0.5pt} {}^{w\hs^e}).
\end{align*}
Since, by Corollary~\ref{son iso de ordenes} and Remark~\ref{sin mencion}
\begin{align*}
& {}^{{}^a\hf w}\hspace{0.5pt}{}^{x\hs^c}\hf k  = {}^{{}^a\hf c}\hspace{0.5pt}{}^{a\hs^c}\hf k  = {}^a\hf {}^c\hf k,\quad {}^a\hf i\hs^{{}^{x\hs^c}\hf u} = {}^a\hf i\hs^{{}^{a\hs^i}\hf e}  =  {}^{a\hs^{{}^i\hf e}}\hf {i\hs^e} = {}^{x\hs^{{}^c\hf u}}\hf {i\hs^e} ,\quad  g\hs^c\hs^e  = g\hs^{{}^c\hf e} \hspace{0.5pt} {}^{c\hs^e}  = g\hs^{{}^c\hf u} \hspace{0.5pt} {}^{w\hs^e}, \\
& {}^{{}^a\hf w}\hspace{0.5pt}{}^{x\hs^c}\hf l  = {}^{{}^a\hf c}\hspace{0.5pt}{}^{a\hs^c}\hf l  = {}^a\hf {}^c\hf l,\quad {}^a\hf j\hs^{{}^{x\hs^c}\hf u} = {}^a\hf j\hs^{{}^{a\hs^j}\hf e}  =  {}^{a\hs^{{}^j\hf e}}\hf {j\hs^e} = {}^{x\hs^{{}^c\hf u}}\hf {j\hs^e} ,\quad  h\hs^c\hs^e  = h\hs^{{}^c\hf e} \hspace{0.5pt} {}^{c\hs^e}  = h\hs^{{}^c\hf u} \hspace{0.5pt} {}^{w\hs^e},
\end{align*}
the result follows immediately from the above equalities.
\end{proof}

\subsection{Small intervals}\label{small intervals}

Next we analyze exhaustively the meaning of equations~\eqref{eq braided} when the sum of the lengths of the intervals $[a,b]$, $[c,d]$ and $[e,f]$ is
smaller than or equal to 1:

\begin{enumerate}[itemsep=1.0ex, topsep=1.0ex, label=\arabic*)]

\item When $a=b$, $c=d$ and $e=f$ this equation reduces to
\begin{multline*}
\quad\qquad\lambda_{a|a|c|c}^{{}^a\!c | {}^a\!c | a\hs^c | a\hs^c} \lambda_{a\hs^c|a\hs^c|e|e}^{{}^{a\hs^c}\hf e | {}^{a\hs^c}\hf e  | a\hs^c\hs^e | a\hs^c\hs^e} \lambda_{{}^a\hf c|{}^a\hf c|{}^{a\hs^c}\hf e| {}^{a\hs^c}\hf e}^{{}^a\hf {}^c\hf e | {}^a\hf {}^c\hf e  | {}^{a\hs^{{}^c\hf e}}\hf {c\hs^e} | {}^{a\hs^{{}^c\hf e}}\hf {c\hs^e}} \\
=\lambda_{c|c|e|e}^{{}^c\!e | {}^c\!e | c\hs^e | c\hs^e}  \lambda_{a|a|{}^c\hf e|{}^c\hf e}^{{}^a\hf {}^c\hf e | {}^a\hf {}^c\hf e |  a\hs^{{}^c\hf e} | a\hs^{{}^c\hf e}} \lambda_{a\hs^{{}^c\hf e} | a\hs^{{}^c\hf e}|c\hs^e | c\hs^e}^{{}^{a\hs^{{}^c\hf e}}\hf {c\hs^e}| {}^{a\hs^{{}^c\hf e}}\hf {c\hs^e} | a\hs^c\hs^e | a\hs^c\hs^e}.
\end{multline*}
This is true since the expressions at the both sides of the equal sign are~$1$.

\item When $a=g=h \prec b$, $c=d$ and $e=f$, it reduces to
\begin{multline*}
\quad\qquad \lambda_{a|b|c|c}^{{}^a\!c | {}^a\!c | a\hs^c | a\hs^c} + \lambda_{a|b|c|c}^{{}^a\!c | {}^a\!c | a\hs^c | b\hs^c} \lambda_{a\hs^c|b\hs^c|e|e}^{{}^{a\hs^c}\hf e | {}^{a\hs^c}\hf e  | a\hs^c\hs^e | a\hs^c\hs^e} \\
= \lambda_{a|b|{}^c\hf e|{}^c\hf e}^{{}^a\hf {}^c\hf e | {}^a\hf {}^c\hf e |  a\hs^{{}^c\hf e} | a\hs^{{}^c\hf e}} + \lambda_{a|b|{}^c\hf e|{}^c\hf e}^{{}^a\hf {}^c\hf e | {}^a\hf {}^c\hf e |  a\hs^{{}^c\hf e} | b\hs^{{}^c\hf e}} \lambda_{a\hs^{{}^c\hf e} | b\hs^{{}^c\hf e}|c\hs^e | c\hs^e}^{{}^{a\hs^{{}^c\hf e}}\hf {c\hs^e}| {}^{a\hs^{{}^c\hf e}}\hf {c\hs^e} | a\hs^c\hs^e | a\hs^c\hs^e}.
\end{multline*}

\item When $a=g \prec h = b$, $c=d$ and $e=f$, it reduces to
$$
\lambda_{a|b|c|c}^{{}^a\!c | {}^a\!c | a\hs^c | b\hs^c}  \lambda_{a\hs^c|b\hs^c|e|e}^{{}^{a\hs^c}\hf e | {}^{a\hs^c}\hf e  | a\hs^c\hs^e | b\hs^c\hs^e} = \lambda_{a|b|{}^c\hf e|{}^c\hf e}^{{}^a\hf {}^c\hf e | {}^a\hf {}^c\hf e |  a\hs^{{}^c\hf e} | b\hs^{{}^c\hf e}} \lambda_{a\hs^{{}^c\hf e} | b\hs^{{}^c\hf e}|c\hs^e | c\hs^e}^{{}^{a\hs^{{}^c\hf e}}\hf {c\hs^e}| {}^{a\hs^{{}^c\hf e}}\hf {c\hs^e} | a\hs^c\hs^e | b\hs^c\hs^e}.
$$

\item When $a \prec g = h = b$, $c=d$ and $e=f$, it reduces to
\begin{multline*}
\quad\qquad \lambda_{a|b|c|c}^{{}^a\!c | {}^a\!c | b\hs^c | b\hs^c} + \lambda_{a|b|c|c}^{{}^a\!c | {}^a\!c | a\hs^c | b\hs^c} \lambda_{a\hs^c|b\hs^c|e|e}^{{}^{a\hs^c}\hf e | {}^{a\hs^c}\hf e  | b\hs^c\hs^e | b\hs^c\hs^e} \\
=  \lambda_{a|b|{}^c\hf e|{}^c\hf e}^{{}^a\hf {}^c\hf e | {}^a\hf {}^c\hf e |  b\hs^{{}^c\hf e} | b\hs^{{}^c\hf e}} + \lambda_{a|b|{}^c\hf e|{}^c\hf e}^{{}^a\hf {}^c\hf e | {}^a\hf {}^c\hf e |  a\hs^{{}^c\hf e} | b\hs^{{}^c\hf e}} \lambda_{a\hs^{{}^c\hf e} | b\hs^{{}^c\hf e}|c\hs^e | c\hs^e}^{{}^{a\hs^{{}^c\hf e}}\hf {c\hs^e}| {}^{a\hs^{{}^c\hf e}}\hf {c\hs^e} | b\hs^c\hs^e | b\hs^c\hs^e},
\end{multline*}
which is equivalent to the condition obtained in item~2), because $r$ satisfies condition~\eqref{ordenes: r preserva la counidad} and the condition required in item~3) at the beginning of Section~\ref{seccion construccion de automorfismos no degenerados} is fulfilled.

\item When $a= b$, $c=i=j \prec d$ and $e=f$, it reduces to
\begin{multline*}
\quad\qquad \lambda_{a|a|c|d}^{{}^a\!c | {}^a\!c | a\hs^c | a\hs^c}  + \lambda_{a|a|c|d}^{{}^a\!c | {}^a\!d | a\hs^c | a\hs^c} \lambda_{{}^a\hf c|{}^a\hf d|{}^{a\hs^c}\hf e| {}^{a\hs^c}\hf e}^{{}^a\hf {}^c\hf e | {}^a\hf {}^c\hf e  |  {}^a\hf c\hs^{{}^{a\hs^c}\hf e}| {}^a\hf c\hs^{{}^{a\hs^c}\hf e}}\\
=  \lambda_{c|d|e|e}^{{}^c\!e | {}^c\!e | c\hs^e | c\hs^e}  + \lambda_{c|d|e|e}^{{}^c\!e | {}^c\!e | c\hs^e | d\hs^e} \lambda_{a\hs^{{}^c\hf e} | a\hs^{{}^c\hf e}|c\hs^e | d\hs^e}^{{}^{a\hs^{{}^c\hf e}}\hf {c\hs^e}| {}^{a\hs^{{}^c\hf e}}\hf {c\hs^e} | a\hs^c\hs^e | a\hs^c\hs^e}.
\end{multline*}

\item When $a= b$, $c=i \prec j = d$ and $e=f$, it reduces to
$$
\quad\qquad \lambda_{a|a|c|d}^{{}^a\!c | {}^a\!d | a\hs^c | a\hs^c} \lambda_{{}^a\hf c|{}^a\hf d|{}^{a\hs^c}\hf e| {}^{a\hs^c}\hf e}^{{}^a\hf {}^c\hf e | {}^a\hf {}^c\hf e  | {}^a\hf c\hs^{{}^{a\hs^c}\hf e}| {}^a\hf d\hs^{{}^{a\hs^d}\hf e}}
= \lambda_{c|d|e|e}^{{}^c\!e | {}^c\!e | c\hs^e | d\hs^e}  \lambda_{a\hs^{{}^c\hf e} | a\hs^{{}^c\hf e}|c\hs^e | d\hs^e}^{{}^{a\hs^{{}^c\hf e}}\hf {c\hs^e}| {}^{a\hs^{{}^d\hf e}}\hf {d\hs^e} | a\hs^c\hs^e | a\hs^c\hs^e}.
$$

\item When $a= b$, $c\prec i = j = d$ and $e=f$, it reduces to
\begin{multline*}
\quad \qquad \lambda_{a|a|c|d}^{{}^a\!d | {}^a\!d | a\hs^c | a\hs^c} + \lambda_{a|a|c|d}^{{}^a\!c | {}^a\!d | a\hs^c | a\hs^c} \lambda_{{}^a\hf c|{}^a\hf d|{}^{a\hs^c}\hf e| {}^{a\hs^c}\hf e}^{{}^a\hf {}^c\hf e | {}^a\hf {}^c\hf e  | {}^a\hf d\hs^{{}^{a\hs^d}\hf e}| {}^a\hf d\hs^{{}^{a\hs^d}\hf e}}\\
%
=  \lambda_{c|d|e|e}^{{}^c\!e | {}^c\!e | d\hs^e | d\hs^e} + \lambda_{c|d|e|e}^{{}^c\!e | {}^c\!e | c\hs^e | d\hs^e}   \lambda_{a\hs^{{}^c\hf e} | a\hs^{{}^c\hf e}|c\hs^e | d\hs^e}^{{}^{a\hs^{{}^d\hf e}}\!\hf {d\hs^e}| {}^{a\hs^{{}^d\hf e}}\!\hf {d\hs^e} | a\hs^c\hs^e | a\hs^c\hs^e} ,
\end{multline*}
which is equivalent to the condition obtained in item~5), by the same argument as in item~4).

\item When $a= b$, $c= d$ and $e=k=l \prec f$, it reduces to
\begin{multline*}
\quad \qquad \lambda_{a\hs^c|a\hs^c|e|f}^{{}^{a\hs^c}\hf e | {}^{a\hs^c}\hf e  | a\hs^c\hs^e | a\hs^c\hs^e} +  \lambda_{a\hs^c|a\hs^c|e|f}^{{}^{a\hs^c}\hf e | {}^{a\hs^c}\hf f  | a\hs^c\hs^e | a\hs^c\hs^e} \lambda_{{}^a\hf c|{}^a\hf c|{}^{a\hs^c}\hf e| {}^{a\hs^c}\hf f}^{{}^a\hf {}^c\hf e | {}^a\hf {}^c\hf e  | {}^{a\hs^{{}^c\hf e}}\!\hf {c\hs^e} | {}^{a\hs^{{}^c\hf e}}\!\hf {c\hs^e}}\\
= \lambda_{c|c|e|f}^{{}^c\!e | {}^c\!e | c\hs^e | c\hs^e}  + \lambda_{c|c|e|f}^{{}^c\!e | {}^c\!f | c\hs^e | c\hs^e}  \lambda_{a|a|{}^c\hf e|{}^c\hf f}^{{}^a\hf {}^c\hf e | {}^a\hf {}^c\hf e |  a\hs^{{}^c\hf e} | a\hs^{{}^c\hf e}}.
\end{multline*}

\item When $a= b$, $c= d$ and $e=k\prec l=f$, it reduces to
$$
\quad\qquad  \lambda_{a\hs^c|a\hs^c|e|f}^{{}^{a\hs^c}\hf e | {}^{a\hs^c}\hf f  | a\hs^c\hs^e | a\hs^c\hs^e} \lambda_{{}^a\hf c|{}^a\hf c|{}^{a\hs^c}\hf e| {}^{a\hs^c}\hf f}^{{}^a\hf {}^c\hf e | {}^a\hf {}^c\hf f  | {}^{a\hs^{{}^c\hf e}}\!\hf {c\hs^e} | {}^{a\hs^{{}^c\hf e}}\!\hf {c\hs^e}}= \lambda_{c|c|e|f}^{{}^c\!e | {}^c\!f | c\hs^e |c\hs^e}  \lambda_{a|a|{}^c\hf e|{}^c\hf f}^{{}^a\hf {}^c\hf e | {}^a\hf {}^c\hf f |  a\hs^{{}^c\hf e} | a\hs^{{}^c\hf e}}.
$$

\item When $a= b$, $c= d$ and $e \prec k = l = f$, it reduces to
\begin{multline*}
\quad\qquad  \lambda_{a\hs^c|a\hs^c|e|f}^{{}^{a\hs^c}\hf f | {}^{a\hs^c}\hf f  | a\hs^c\hs^e | a\hs^c\hs^e} + \lambda_{a\hs^c|a\hs^c|e|f}^{{}^{a\hs^c}\hf e | {}^{a\hs^c}\hf f  | a\hs^c\hs^e | a\hs^c\hs^e} \lambda_{{}^a\hf c|{}^a\hf c|{}^{a\hs^c}\hf e| {}^{a\hs^c}\hf f}^{{}^a\hf {}^c\hf f | {}^a\hf {}^c\hf f  | {}^{a\hs^{{}^c\hf e}}\hf {c\hs^e} | {}^{a\hs^{{}^c\hf e}}\hf {c\hs^e}}\\
= \lambda_{c|c|e|f}^{{}^c\hf f | {}^c\hf f | c\hs^e |c\hs^e} + \lambda_{c|c|e|f}^{{}^c\!e | {}^c\!f | c\hs^e |c\hs^e}  \lambda_{a|a|{}^c\hf e|{}^c\hf f}^{{}^a\hf {}^c\hf f | {}^a\hf {}^c\hf f |  a\hs^{{}^c\hf e} | a\hs^{{}^c\hf e}},
\end{multline*}
which is equivalent to the condition obtained in item~8), by the same argument as in item~4).
\end{enumerate}

In the sequel for $v=(v_1,v_2)$ and $w=(w_1,w_2)$ in $K^2$ we say that $v$ and $w$ are aligned and we write $v\sim w$ if $\det\begin{psmallmatrix} v_1&v_2\\ w_1&w_2 \end{psmallmatrix}=0$.

\begin{example}
Let $X_{a,h}\subseteq X$ be the set $\{a,b,c,d,e,f,g,h\}$. Assume that $r_{|}$ is the flip on $X_{a,h}\times X_{a,h}$. Then items~3), 6) and 9) are automatically fulfilled; whereas items~2), 5) and~8) say that when $a \prec b$,  $c=d$ and $e=f$,
\begin{equation}\label{alineados 1}
\lambda_{a|b|c|c}^{c|c|a|a} + \lambda_{a|b|c|c}^{c|c|a|b}\lambda_{a|b|e|e}^{e|e|a|a} = \lambda_{a|b|e|e}^{e|e|a|a} + \lambda_{a|b|e|e}^{e|e|a|b}\lambda_{a|b|c|c}^{c|c|a|a},
\end{equation}
that when $a=b$, $c \prec d$ and $e=f$,
\begin{equation}\label{alineados 2}
\lambda_{a|a|c|d}^{c|c|a|a} + \lambda_{a|a|c|d}^{c|d|a|a}\lambda_{c|d|e|e}^{e|e|c|c} = \lambda_{c|d|e|e}^{e|e|c|c} + \lambda_{c|d|e|e}^{e|e|c|d}\lambda_{a|a|c|d}^{c|c|a|a},
\end{equation}
and that when $a=b$, $c=d$ and $e\prec f$,
\begin{equation}\label{alineados 3}
\lambda_{a|a|e|f}^{e|e|a|a} + \lambda_{a|a|e|f}^{e|f|a|a}\lambda_{c|c|e|f}^{e|e|c|c} = \lambda_{c|c|e|f}^{e|e|c|c} + \lambda_{c|c|e|f}^{e|f|c|c}\lambda_{a|a|e|f}^{e|e|a|a}.
\end{equation}
Equality~\eqref{alineados 1} says that
$$
(\lambda_{a|b|e|e}^{e|e|a|a}, \lambda_{a|b|e|e}^{e|e|a|b} - 1) \sim (\lambda_{a|b|c|c}^{c|c|a|a}, \lambda_{a|b|c|c}^{c|c|a|b} - 1),
$$
equality~\eqref{alineados 2} says that
$$
(\lambda_{c|d|e|e}^{e|e|c|c}, \lambda_{c|d|e|e}^{e|e|c|d} - 1)\sim (\lambda_{a|a|c|d}^{c|c|a|a}, \lambda_{a|a|c|d}^{c|d|a|a} - 1)
$$
and equality~\eqref{alineados 3} says that
$$
(\lambda_{c|c|e|f}^{e|e|c|c}, \lambda_{c|c|e|f}^{e|f|c|c} - 1)\sim(\lambda_{a|a|e|f}^{e|e|a|a}, \lambda_{a|a|e|f}^{e|f|a|a} - 1).
$$
\end{example}

Let $r\colon D\ot D \longrightarrow D\ot D$ be a non-degenerate coalgebra automorphism that induces a non-degenerate  solution $r_{|}\colon X\times X \longrightarrow X \times X$ of the set theoretic braid equation. Assume that there exist two commuting order automorphisms $\phi_r,\phi_l\colon X\to X$ such that
$\phi_r={}^x\hf(-)$ and $\phi_l= (-)\hs^y$ for all $x,y\in X$. For all $s,a,b\in X$ with $a\prec b$ and $i\in \mathds{Z}$, we will write
\begin{alignat}{2}
&s^{(i)}\coloneqq \phi_r^{i}(s),&&\qquad {}^{(i)}\hf s\coloneqq \phi_l^i(s), \label{potencia de phi}\\
&\alpha_r(s)(a,b)\coloneqq\lambda_{a|b|s|s}^{{}^{(1)}\hf s | {}^{(1)}\hf s | a^{(1)} | b^{(1)}},&&\qquad
\beta_r(s)(a,b)\coloneqq\lambda_{a|b|s|s}^{ {}^{(1)}\hf s | {}^{(1)}\hf s | a^{(1)} | a^{(1)}},\label{definicion de a sub r}\\
&\alpha_l(s)(a,b)\coloneqq\lambda_{s|s|a|b}^{ {}^{(1)}\hf a | {}^{(1)}\hf b | s^{(1)} | s^{(1)}},&&\qquad
\beta_l(s)(a,b)\coloneqq\lambda_{s|s|a|b}^{{}^{(1)}\hf a | {}^{(1)}\hf a | s^{(1)} |s^{(1)}}.\label{definicion de a sub l}
\end{alignat}
For the sake of brevity in the following result we write
\begin{equation}\label{a sub l a la i y a sub r a la i}
\alpha_r^{(i)}(s)\coloneqq \alpha_r(s^{(i)})(a^{(i)},b^{(i)}) \quad\text{and}\quad \alpha_l^{(i)}(s)\coloneqq \alpha_l({}^{(i)}s)({}^{(i)}a,{}^{(i)}b),
\end{equation}
and we define $\beta_r^{(i)}(s)$ and $\beta^{(i)}_l(s)$ in a similar way. The following proposition generalizes the result obtained in the previous example.

\begin{proposition}\label{parte lineal}
Let $n\in \mathds{N}$. Assume that $\phi_r^{n}=\phi_l^{n}=\ide$ and that each element of~$K^{\times}$ has $n$ distinct $n$th roots, and fix a primitive $n$th root of unity $w$. The following facts hold:

\begin{enumerate}[itemsep=1.0ex, topsep=1.0ex, label=\emph{\arabic*)}]

\item Item~3) of Subsection~\ref{small intervals} is satisfied if and only if for all $a\prec b$ in $X$ there exists a constant $C_r(a,b)\in K^{\times}$ such that
\begin{equation*}\label{cocientes alpha mas condicion}
\qquad\quad\frac{\alpha_r^{(1)}\!(s)}{\alpha_r(s)}=C_r(a,b) \quad \text{and}\quad \alpha_r(s)=\alpha_r({}^{(1)}\hf s \hs^{(1)})\,\text{ for all $s\in X$.}
\end{equation*}

\item Item~9) of Subsection~\ref{small intervals} is satisfied if and only if for all $a\prec b$ in $X$ there exists a constant $C_l(a,b)\in K^{\times}$ such that
\begin{equation*}\label{cocientes alpha mas condicion nueve}
\qquad\quad \frac{\alpha_l^{(1)}\!(s)}{\alpha_l(s)}=C_l(a,b) \quad \text{and}\quad \alpha_l(s)=\alpha_l({}^{(1)}\hf s \hs^{(1)})\,\text{ for all $s\in X$.}
\end{equation*}

\item Assume that the conditions in item~1) are fulfilled. Then item~2) of Subsection~\ref{small intervals} is satisfied if and only if for all $s\in X$
and $a\prec b$ in $X$
\begin{align*}
&\beta_r(s)=\beta_r({}^{(1)}\hf s \hs^{(1)}),
\shortintertext{and for all $a,b,s,t\in X$ with $a\prec b$ and each $0\le i<n$}
\qquad\quad& \biggl(\gamma_r\alpha_r(s)-w^i,\sum_{j=0}^{n-1}\wp_j w^{ij}\beta_r^{(j)}(s)\biggr)\sim \biggl(\gamma_r\alpha_r(t)-w^i,\sum_{j=0}^{n-1}\wp_j w^{ij}\beta_r^{(j)}(t)\biggr),
\end{align*}
where

\begin{itemize}[itemsep=1.0ex, topsep=1.0ex]

\item[-] $\gamma_r$ is a fixed $n$th root of $\displaystyle\prod_{u=0}^{n-2} C_r(a^{(u)},b^{(u)})^{n-u-1}$,

\item[-] $\wp_j\coloneqq \displaystyle\frac{1}{\gamma_r^{j+1}} \prod_{u=0}^{n-2}C_r(a^{(u)},b^{(u)}) \prod_{u=0}^{j-2} C_r(a^{(u)},b^{(u)})^{j-u-1}$.
\end{itemize}

\item Assume that the conditions in item~2) are fulfilled. Then item~8) of Subsection~\ref{small intervals} is satisfied if and only if for all $s\in X$
and $a\prec b$ in $X$
\begin{align*}
&\beta_l(s)=\beta_l({}^{(1)}\hf s \hs^{(1)}),
\shortintertext{and for all $a,b,s,t\in X$ with $a\prec b$ and each $0\le i<n$}
\qquad\quad& \biggl(\gamma_l\alpha_l(s)-w^i,\sum_{j=0}^{n-1}\ell_j w^{ij}\beta_l^{(j)}(s)\biggr)\sim \biggl(\gamma_l\alpha_l(t)-w^i,\sum_{j=0}^{n-1} \ell_j w^{ij}\beta_l^{(j)}(t)\biggr),
\end{align*}
where

\begin{itemize}[itemsep=1.0ex, topsep=1.0ex]

\item[-] $\gamma_l$ is a fixed $n$th root of $\displaystyle\prod_{u=0}^{n-2} C_l({}\hs^{(u)}a,{}\hs^{(u)}b)^{n-u-1}$,

\item[-] $\ell_j\coloneqq \displaystyle\frac{1}{\gamma_l^{j+1}} \prod_{u=0}^{n-2}C_l({}\hs^{(u)}a,{}\hs^{(u)}b) \prod_{u=0}^{j-2} C_l({}\hs^{(u)}a,{} \hs^{(u)}b)^{j-u-1}$.
\end{itemize}

\end{enumerate}
\end{proposition}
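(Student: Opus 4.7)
The plan is to translate items 3), 9), 2), 8) of Subsection~\ref{small intervals} into the shorthand $\alpha_r,\beta_r,\alpha_l,\beta_l$, and then to exploit the periodicity $\phi_r^n = \phi_l^n = \id$ together with the existence of primitive $n$th roots of unity in $K$ via discrete Fourier analysis on $\mathds{Z}/n\mathds{Z}$.

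For items~1) and~2) of the proposition I will rewrite items~3) and~9) of Subsection~\ref{small intervals}. By Proposition~\ref{covers tensor iguales} and the defining formulas (\ref{definicion de a sub r}) and (\ref{definicion de a sub l}), each $\lambda$-factor appearing in those items with $a \prec b$ and $c = d$ (or with $c\prec d$ and $e=f$, respectively) is exactly an $\alpha_r$ or $\alpha_l$ value evaluated at a suitably shifted argument. Item~3) then becomes a multiplicative cocycle identity in $s$ and $e$, and a standard separation-of-variables argument shows that it is equivalent to $\alpha_r^{(1)}(s)/\alpha_r(s)$ depending only on $(a,b)$ (which defines $C_r(a,b)$), together with the diagonal invariance $\alpha_r(s) = \alpha_r({}^{(1)}\hf s\hs^{(1)})$. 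The argument for item~9) is strictly symmetric using $\phi_l$ in place of $\phi_r$, and yields item~2) of the proposition.

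For items~3) and~4) of the proposition I will rewrite items~2) and~8) of Subsection~\ref{small intervals}. Each side of these equations has a lone $\beta$-term plus a product of an $\alpha$-term with a $\beta$-term. After applying~(\ref{definicion de a sub r}) one arrives at an identity of the form
\[
\beta_r(s)(a,b) + \alpha_r(s)(a,b)\,\beta_r(e)({}^{(1)}a,{}^{(1)}b) \;=\; \beta_r({}^{(1)}e)(a,b) + \alpha_r({}^{(1)}e)(a,b)\,\beta_r^{(1)}(s),
\]
together with the invariance $\beta_r(s) = \beta_r({}^{(1)}\hf s\hs^{(1)})$, which is obtained by the same argument as in the $\alpha$-case combined with the constraints coming from condition~\eqref{ordenes: r preserva la counidad}. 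Substituting the multiplicative cocycle from item~1) and iterating $n$ times along the $\phi_r$-orbit, using $\phi_r^n = \id$, produces a cyclic linear system of $n$ equations relating $\beta_r^{(0)}(s),\ldots,\beta_r^{(n-1)}(s)$ to the corresponding values at $t$.

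The final and most delicate step is to diagonalise this cyclic system via the discrete Fourier transform over $\mathds{Z}/n\mathds{Z}$. Projecting onto the Fourier mode $w^i$ decouples the $n$ equations into precisely the alignment condition stated in item~3); the $n$th root $\gamma_r$ emerges as the Fourier-compatible root of the telescoping product $\prod_{u=0}^{n-2} C_r(a^{(u)},b^{(u)})^{n-u-1}$ that measures the failure of $\alpha_r$ to be constant on the orbit, and the weights $\wp_j$ are the partial products generated when the cocycle is substituted recursively into each $\beta_r^{(j)}$ before passing through $w^{ij}$. The proof of item~4) is entirely parallel with $\phi_l$ replacing $\phi_r$. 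The main obstacle I expect is the bookkeeping here: matching the iterated substitutions against the exact exponent pattern $\prod_{u=0}^{j-2} C_r(a^{(u)},b^{(u)})^{j-u-1}$ in the definition of $\wp_j$, and verifying that the chosen $n$th root $\gamma_r$ is exactly the normalising factor that makes Fourier alignment at each mode equivalent to the original cyclic identity.
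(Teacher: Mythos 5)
Your proposal is correct and follows essentially the same route as the paper's proof: the paper likewise derives the constancy of $\alpha_r^{(1)}(s)/\alpha_r(s)$ by specializing $e$ in item~3) of Subsection~\ref{small intervals}, reduces item~2) to the cyclic system~\eqref{igualdades} along the $\phi_r$-orbit, and decouples it by forming the weighted sums $S_r^{(i)}(x)=\sum_j w^{ij}\wp_j\beta_r^{(j)}(x)$ (your discrete Fourier transform), with the converse resting on the invertibility of the Vandermonde matrix in the powers of $w$ and the nonvanishing of the $\wp_j$. The bookkeeping you flag as the remaining obstacle is resolved in the paper exactly as you anticipate, via the telescoping identities $\alpha_r^{(j)}(s)=\alpha_r(s)C(j)$ and $\wp_jC(j)=\gamma_r\wp_{j+1}$ with $C(j)=\prod_{u=0}^{j-1}C_r(a^{(u)},b^{(u)})$.
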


\begin{proof} Assume that the equality in item~3) of Subsection~\ref{small intervals} holds. By Remark~\ref{condicion para iso} all terms in that equality are non zero. Replacing $e$ by $s^{(1)}$ in it, we obtain
$$
\frac{\alpha_r^{(1)}(c)}{\alpha_r(c)} = \frac{\alpha_r^{(1)}(s)}{\alpha_r({}^{(1)}s^{(1)})} = \frac{\alpha_r^{(1)}(s)}{\alpha_r(s)}\qquad\text{for all $c,s\in X$,}
$$
where the last equality follows from the first one taking $c=s$. From this it follows immediately that there exists $C_r(a,b)\in K^{\times}$ such that the equalities in item~1) are true. The converse is straightforward. A similarly argument proves item~2).

\smallskip

Assume now that the conditions in item~1) are fulfilled and that the equality in item~2) of Subsection~\ref{small intervals} holds. By Remark~\ref{condicion para iso}, setting $(a,b)\coloneqq~(a\hs^{(-1)},b\hs^{(-1)})$ and $c\coloneqq{}^{(1)}\hf e$ the equality yields $\beta_r(e)=\beta_r({}^{(1)}\hf e \hs^{(1)})$ for all $e\in X$. Using the same equality with $c\coloneqq s^{(i)}$, $(a,b)\coloneqq(a\hs^{(i)},b\hs^{(i)})$ and $e\coloneqq t^{(i+1)}$, where $i\in \{0,\dots,n-1\}$, we obtain
\begin{equation}\label{igualdades}
\begin{aligned}
\beta_r^{(0)}(s)+\alpha_r^{(0)}(s)\beta_r^{(1)}(t) & =\beta_r^{(0)}(t)+\alpha_r^{(0)}(t)\beta_r^{(1)}(s)  \\
\beta_r^{(1)}(s)+\alpha_r^{(1)}(s)\beta_r^{(2)}(t) &=\beta_r^{(1)}(t)+\alpha_r^{(1)}(t)\beta_r^{(2)}(s)  \\
&\vdotswithin{=}\\
\beta_r^{(n-1)}(s)+\alpha_r^{(n-1)}(s)\beta_r^{(0)}(t) &=\beta_r^{(n-1)}(t)+\alpha_r^{(n-1)}(t)\beta_r^{(0)}(s),
\end{aligned}
\end{equation}
where in the last equation we have used that $\beta_r^{(n)}=\beta_r^{(0)}$. For each $j\in \mathds{N}_0$, let
$$
C(j)\coloneqq \prod_{u=0}^{j-1} C_r(a^{(u)},b^{(u)}).
$$
Using that $\wp_j C(j)=\gamma_r \wp_{j+1}$ and $\wp_n=\wp_0$ we obtain that
\begin{equation}\label{sumas}
\sum_{j=0}^{n-1}w^{ij} \wp_j C(j) \beta_r^{(j+1)}(x)=\sum_{j=0}^{n-1}w^{ij} \gamma_r\wp_{j+1} \beta_r^{(j+1)}(x) =\frac{\gamma_r}{w^i} \sum_{j=0}^{n-1}w^{ij} \wp_j \beta_r^{(j)}(x),
\end{equation}
for all $x\in X$ and $i\in\{0,\dots,n-1\}$.
Adding the first equality in~\eqref{igualdades} multiplied by $\wp_0$ to the second one multiplied by $w^i\wp_1$, and so on until we add the last equality multiplied by $w^{i(n-1)}\wp_{n-1}$, and using that $\alpha_r^{(j)}(s)=\alpha_r(s)C(j)$, we obtain
\begin{equation}\label{igualdad de sumas}
\begin{split}
\sum_{j=0}^{n-1}w^{ij} \wp_j\beta_r^{(j)}(s)&+\alpha_r(s)\sum_{j=0}^{n-1}w^{ij} \wp_j C(j) \beta_r^{(j+1)}(t)\\
&=  \sum_{j=0}^{n-1}w^{ij} \wp_j\beta_r^{(j)}(t)+\alpha_r(t)\sum_{j=0}^{n-1}w^{ij} \wp_j C(j) \beta_r^{(j+1)}(s),
\end{split}
\end{equation}
for $i\in\{0,\dots,n-1\}$. Hence, by~\eqref{sumas},
\begin{equation}\label{igualdad con S}
S_r^{(i)}(s)+\alpha_r(s)\frac{\gamma_r}{w^i} S_r^{(i)}(t)=S_r^{(i)}(t)+\alpha_r(t)\frac{\gamma_r}{w^i} S_r^{(i)}(s),
\end{equation}
where $S_r^{(i)}(x)\coloneqq \sum_{j=0}^{n-1}w^{ij} \wp_j\beta_r^{(j)}(x)$ for $x\in X$, and so, for $0\le i< n$, we have
\begin{equation}\label{equivalencia a derecha}
(\gamma_r \alpha_r(s)-w^i,S_r^{(i)}(s))\sim (\gamma_r \alpha_r(t)-w^i,S_r^{(i)}(t))\quad\text{for all $s,t\in X$,}
\end{equation}
as desired.

Conversely assume that $\beta_r(e)=\beta_r({}^{(1)}\hf e \hs^{(1)})$ for all $e\in X$, and that~\eqref{equivalencia a derecha}
holds, which means that~\eqref{igualdad con S} holds. By~\eqref{sumas} the systems~\eqref{igualdad con S} and~\eqref{igualdad de sumas} are equivalent. We claim that the systems~\eqref{igualdad de sumas} and~\eqref{igualdades} are also equivalent. Indeed, this follows easily from the fact that all the $\wp_j$'s are non zero and that the matrix
$$
\begin{pmatrix}1&1&\dots&1\\
1&w&\dots&w^{n-1}\\
\vdots&\vdots&\ddots&\vdots\\
1&w^{n-1}&\dots&w^{(n-1)^2}
\end{pmatrix}
$$
is invertible, because it is the Vandermonde matrix associated with the elements $1,w,w^2,\dots,w^{n-1}$, which are all dif\-fe\-rent.
Item~2) of Subsection~\ref{small intervals} follows immediately from the first equality in~\eqref{igualdades} with $s$ replaced by $c$ and $t$ replaced by
${}^{(1)} e$, using
that $\beta_r({}^{(1)}\hf e \hs^{(1)})(a^{(1)},b^{(1)}) = \beta_r(e)(a^{(1)},b^{(1)})$. A similar argument proves item~(4).
\end{proof}

Let $\phi_r,\phi_l,\alpha_r^{(i)},\alpha_l^{(i)},\beta_r^{(i)}$ and $\beta_l^{(i)}$ be as in the discussion above Proposition~\ref{parte lineal}.

\begin{proposition}\label{corolario parte lineal}
Let $n\in \mathds{N}$. Assume that $\phi_r=\phi_l$, that $\phi_r^{n}=\ide$ and that each element of~$K^{\times}$ has $n$ distinct $n$-roots and fix a primitive $n$-root of unity $w$. Then equality~\eqref{eq braided} is satisfied for all the intervals $[a,b]$, $[c,d]$, $[e,f]$, $[g,h]$, $[i,j]$ and $[k,l]$ such that
$$
[g,h] \subseteq [a,b],\quad [i,j] \subseteq [c,d]\quad\text{and}\quad [k,l] \subseteq [e,f],
$$
and $\mathfrak{h}[a,b]+\mathfrak{h}[c,d]+\mathfrak{h}[e,f]=1$, if and only if the following facts hold:

\begin{enumerate}[itemsep=1.0ex, topsep=1.0ex, label=\emph{\arabic*)}]
\item For all $a\prec b$ in $X$ there exists a constant $C(a,b)\in K^{\times}$, such that
\begin{equation*}
\frac{\alpha_l(s)}{\alpha_l^{(1)}(s)}=\frac{\alpha_r(s)}{\alpha_r^{(1)}(s)}=C(a,b) ,\quad\text{for all $s\in X$.}
\end{equation*}

\item For all $a\prec b$ and $s$ in $X$, it is true that
$$
\qquad\qquad\! \alpha_r(s)=\alpha_r(s^{(2)}),\!\!\quad\alpha_r(s)=\alpha_r(s^{(2)}),\!\!\quad\beta_r(s)=\beta_r(s^{(2)}),\!\!\quad \beta_l(s)=\beta_l(s^{(2)}).
$$

\item For all $a\prec b$ in $X$ and each $0\le i< n$, there exists a one dimensional vector subspace of $K\times K$, which contains all the vectors
$$
\qquad\quad\biggl(\gamma\alpha_r(s)-w^i,\sum_{j=0}^{n-1}\wp_j w^{ij}\beta_r^{(j)}(s)\biggr)\quad\text{and}\quad
\biggl(\gamma\alpha_l(s)-w^i,\sum_{j=0}^{n-1}\wp_j w^{ij}\beta_l^{(j)}(s)\biggr),
$$
where $\gamma\coloneqq\gamma_r=\gamma_l$ and $\wp_j=\ell_j$ are as in Proposition~\ref{parte lineal}.
\end{enumerate}
\end{proposition}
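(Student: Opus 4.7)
The plan is to go case by case through the ten equations listed in Subsection~\ref{small intervals}, which between them cover all instances of~\eqref{eq braided} with $\mathfrak{h}[a,b]+\mathfrak{h}[c,d]+\mathfrak{h}[e,f]=1$. Case~1) is trivial and cases~4), 7), 10) are already reduced there to cases~2), 5), 8) respectively. Four of the remaining six, namely cases~2), 3), 8), 9), are exactly what Proposition~\ref{parte lineal} dispatches: items~1) and~2) of that proposition translate cases~3) and 9) into the existence of constants $C_r(a,b)$ and $C_l(a,b)$ controlling the one-step ratios of $\alpha_r$ and $\alpha_l$ (and two-step periodicity), while items~3) and 4) translate cases~2) and 8) into the alignments for $(\gamma_r\alpha_r(s)-w^i,S_r^{(i)}(s))$ and $(\gamma_l\alpha_l(s)-w^i,S_l^{(i)}(s))$ (plus two-step periodicity of $\beta_r$ and $\beta_l$), where $S_r^{(i)}$ and $S_l^{(i)}$ are the Fourier sums introduced in the proof of Proposition~\ref{parte lineal}.

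Next I would dispose of case~6). Under the hypothesis $\phi_r=\phi_l=\phi$, a direct expansion of the notation turns the case~6) equality into
\[
\alpha_l(a)(c,d)\,\alpha_r^{(1)}(e)(c,d)=\alpha_r(e)(c,d)\,\alpha_l^{(1)}(a)(c,d).
\]
Dividing by the nonvanishing product $\alpha_l(a)(c,d)\alpha_r(e)(c,d)$, both sides must equal a constant depending only on the covering pair $(c,d)$; by items~1) and~2) of Proposition~\ref{parte lineal} the left-hand ratio is $1/C_r(c,d)$ and the right-hand ratio is $1/C_l(c,d)$, so case~6) is equivalent to $C_r(c,d)=C_l(c,d)$ for every cover, i.e.\ to the collapse of the two constants into a single $C(a,b)$. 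This is item~1) of the corollary, and it forces $\gamma_r=\gamma_l=:\gamma$ and $\wp_j=\ell_j$, so items~3) and~4) of Proposition~\ref{parte lineal} take place with the \emph{same} $\gamma$ and the \emph{same} weights.

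The remaining case~5) couples the right- and left-type coefficients. After substitution it becomes
\[
\beta_l(a)(c,d)+\alpha_l(a)(c,d)\,\beta_r^{(1)}(e)(c,d)=\beta_r(e)(c,d)+\alpha_r(e)(c,d)\,\beta_l^{(1)}(a)(c,d).
\]
I iterate this under the simultaneous shift $(a,c,d,e)\mapsto(a^{(i)},c^{(i)},d^{(i)},e^{(i)})$ for $i=0,\dots,n-1$, producing $n$ coupled identities, and run the same discrete Fourier argument as in the proof of Proposition~\ref{parte lineal}: multiply the $i$-th iterate by $\wp_i w^{ki}$, sum over $i$, and invoke the normalising identities $\wp_j C(j)=\gamma\wp_{j+1}$, $\wp_n=\wp_0$ together with the accumulated ratios $\alpha_r^{(k)}=\alpha_r/\prod_{u<k}C$ and $\alpha_l^{(k)}=\alpha_l/\prod_{u<k}C$ (available by Step~1 and Step~2). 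Invertibility of the Vandermonde matrix makes the transformed system equivalent to the alignment, for each $0\le k<n$, of the vectors $(\gamma\alpha_l(a)-w^k,S_l^{(k)}(a))$ and $(\gamma\alpha_r(e)-w^k,S_r^{(k)}(e))$. Combined with the two intra-side alignments already furnished by Proposition~\ref{parte lineal}, all four families $(\gamma\alpha_r(s)-w^k,S_r^{(k)}(s))$ and $(\gamma\alpha_l(s)-w^k,S_l^{(k)}(s))$ lie in one common one-dimensional subspace, which is item~3) of the corollary; the two-step periodicity $\beta_r(s)=\beta_r(s^{(2)})$ and $\beta_l(s)=\beta_l(s^{(2)})$ in item~2) is then extracted by specialisations mirroring those used in Proposition~\ref{parte lineal}.

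The main obstacle I foresee is the index bookkeeping in the Fourier step for case~5): verifying that, despite the asymmetric appearance of $\alpha_r,\beta_r$ on one side and $\alpha_l,\beta_l$ on the other, both halves transform under precisely the same weights $\wp_j w^{ij}$ and accumulate the same product $\prod_u C$. It is exactly the equality $C_r=C_l$ extracted in case~6) that makes this matching possible, and it is the cyclicity $\phi^n=\ide$ together with $\wp_n=\wp_0$ that keeps the iterated system closed.
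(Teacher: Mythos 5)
Your proposal is correct and follows essentially the same route as the paper's proof: reduction to items~2), 3), 5), 6), 8), 9) of Subsection~\ref{small intervals}, invocation of Proposition~\ref{parte lineal} for the pure left/right cases, extraction of the single constant $C(a,b)$ (hence $\gamma_r=\gamma_l$ and $\wp_j=\ell_j$) from case~6), and the same iterated discrete-Fourier/Vandermonde argument applied to the mixed system arising from case~5) to merge the two one-dimensional subspaces into one.
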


\begin{proof}
We know that if $\mathfrak{h}[a,b]+\mathfrak{h}[c,d]+\mathfrak{h}[e,f]=1$, then equality~\eqref{eq braided} is equivalent to items~2), 3), 5), 6), 8) and~9) of Subsection~\ref{small intervals}. Moreover item~6) of Subsection~\ref{small intervals} is satisfied if and only if for all $a\prec b$ in $X$ there exists $C(a,b)\in K^{\times}$ such that
\begin{equation*}\label{cocientes alpha mas condicion nueve}
\frac{\alpha_l(s^{(1)})(a^{(1)},b^{(1)})}{\alpha_l(s)(a,b)}=\frac{\alpha_r({}^{(1)}s)({}^{(1)}\!a,{}^{(1)}\!b)}{\alpha_r(s)(a,b)} = C(a,b) \quad \text{for all $s\in X$.}
\end{equation*}
On the other hand Proposition~\ref{parte lineal} gives necessary and sufficient conditions in order that items~2), 3), 8) and~9) of Subsection~\ref{small intervals} are satisfied. Since $\phi_r = \phi_l$, we have $s^{(i)} = {}^{(i)} s$ for all $s\in X$ and all $i\in \mathds{Z}$, and $C_r(a,b) = C(a,b) = C_l(a,b)$ for all $a\prec b$ in~$X$. Consequently $\gamma_r$ and $\gamma_l$ are $n$th roots of the same element, and so we can choose $\gamma_r = \gamma_l$. It follows that $\wp_j=\ell_j$ for $0\le j<n$ and that the conditions in Proposition~\ref{parte lineal} are equivalent to items~1) and~2) together with the fact that there exist two one dimensional vector subspaces of $K\times K$ that contain all the vectors
$$
\biggl(\gamma\alpha_r(s)-w^i,\sum_{j=0}^{n-1}\wp_j w^{ij}\beta_r^{(j)}(s)\biggr)\quad\text{and}\quad \biggl(\gamma\alpha_l(s)-w^i,\sum_{j=0}^{n-1}\wp_j w^{ij}\beta_l^{(j)}(s)\biggr),
$$
respectively. Assume now that item~5) of Subsection~\ref{small intervals} is satisfied. Since $\phi_r = \phi_l$, using the equality in that item with $c\coloneqq s^{(i)}$, $(a,b)\coloneqq(a\hs^{(i)},b\hs^{(i)})$ and $e\coloneqq t^{(i)}$, where $i$ runs on $\{0,\dots,n-1\}$, we obtain
\begin{equation}\label{igualdades mixtas}
\begin{aligned}
\beta_l^{(0)}(s)+\alpha_l^{(0)}(s)\beta_r^{(1)}(t) & =\beta_r^{(0)}(t)+\alpha_r^{(0)}(t)\beta_l^{(1)}(s)  \\
\beta_l^{(1)}(s)+\alpha_l^{(1)}(s)\beta_r^{(2)}(t) &=\beta_r^{(1)}(t)+\alpha_r^{(1)}(t)\beta_l^{(2)}(s)  \\
&\vdotswithin{=}\\
\beta_l^{(n-1)}(s)+\alpha_l^{(n-1)}(s)\beta_r^{(0)}(t) &=\beta_r^{(n-1)}(t)+\alpha_r^{(n-1)}(t)\beta_l^{(0)}(s),
\end{aligned}
\end{equation}
where in the last equation we have used that $\beta_r^{(n)}=\beta_r^{(0)}$ and $\beta_l^{(n)}=\beta_l^{(0)}$. Mimicking the proof of item~3) of Proposition~\ref{parte lineal} we obtain that the equalities in~\eqref{igualdades mixtas} hold if and only if for $0\le i<n$
\begin{equation}\label{equivalencia mixta}
(\gamma \alpha_l(s)-w^i,S_l^{(i)}(s))\sim (\gamma \alpha_r(t)-w^i,S_r^{(i)}(t))\quad\text{for all $s,t\in X$},
\end{equation}
where $S_l^{(i)}(x)=\sum_{j=0}^{n-1}w^{ij} \wp_j\beta_l^{(j)}(x)$ and $S_r^{(i)}(x)$ is as in the proof of Proposition~\ref{parte lineal}. So item~3) is true. We leave the proof of the converse to the reader.
\end{proof}

\section[The configuration $x \prec y$ when $r_{|}$ is the flip]{The configuration $\bm{x \prec y}$ when $\bm{r_{|}}$ is the flip}\label{section la configuracion x menor que y}
Let $(X,\le)$ and $D$ be as in Section~2, let $(D,r)$ be a non-degenerate braided set and let $x, y\in X$ such that $x \prec y$. In this section we determine all the possibilities for the coefficients $\lambda_{a_1|b_1|a_2|b_2}^{a_3|b_3|a_4|b_4}$ with $a_i,b_i\in\{x,y\}$ and $a_i\le b_i$, under the assumption that
$$
r_{|}(x,x)=(x,x),\quad r_{|}(x,y)=(y,x),\quad r_{|}(y,x)=(x,y)\quad\text{and}\quad r_{|}(y,y)=(y,y).
$$

Let $f(x,x)\coloneqq 0$, $f(x,y)\coloneqq 1$ and $f(y,y)\coloneqq 2$. We can codify the $81$ coefficients $\lambda_{a_1|b_1|a_2|b_2}^{a_3|b_3|a_4|b_4}$ in a $9\times9$ matrix $M$, setting
\begin{equation}\label{matriz M}
M_{i,j}=\lambda_{a_1|b_1|a_2|b_2}^{a_3|b_3|a_4|b_4}\qquad \parbox[c]{5cm}{if $i=3f(a_1,b_1)+f(a_2,b_2)+1$ and $j=3f(a_3,b_3)+f(a_4,b_4)+1$.}
\end{equation}

\begin{remark}\label{ceros en M} By Proposition~\ref{donde puede no anularse},
$$
M=
\begin{pmatrix*}[c]
 1 & \phantom{{}_1}\beta_1 & 0 & \phantom{{}_1}\beta_2 & \Gamma_1 & 0 & 0 & 0 & 0 \\
 0 & 0 & 0 & \phantom{{}_1}\alpha_2 & B_1 & 0 & 0 & 0 & 0 \\
 0 & 0 & 0 & \!-\beta_2 & \Gamma_2 & 0 & 1 &  \phantom{{}_1}\beta_4 & 0 \\
 0 & \phantom{{}_1}\alpha_1 & 0 & 0 & B_2 & 0 & 0 & 0 & 0 \\
 0 & 0 & 0 & 0 & \!\! A & 0 & 0 & 0 & 0 \\
 0 & 0 & 0 & 0 & B_3 & 0 & 0 &  \phantom{{}_1}\alpha_4 & 0 \\
 0 & \!-\beta_1 & 1 & 0 & \Gamma_3 &  \phantom{{}_1}\beta_3 & 0 & 0 & 0 \\
 0 & 0 & 0 & 0 & B_4 &  \phantom{{}_1}\alpha_3 & 0 & 0 & 0 \\
 0 & 0 & 0 & 0 & \Gamma_4 & \!-\beta_3 & 0 & \!-\beta_4 & 1
\end{pmatrix*},
$$
where
\begin{alignat*}{4}
& \alpha_1:=\lambda_{x|x|x|y}^{x|y|x|x}, &&\quad \alpha_2:=\lambda_{x|y|x|x}^{x|x|x|y}, &&\quad \alpha_3:=\lambda_{x|y|y|y}^{y|y|x|y},&&\quad  \alpha_4:=\lambda_{y|y|x|y}^{x|y|y|y},\\[1.5pt]
&\beta_1:=\lambda_{x|x|x|y}^{x|x|x|x},&&\quad  \beta_2:=\lambda_{x|y|x|x}^{x|x|x|x},&&\quad \beta_3:=\lambda_{y|y|x|y}^{x|x|y|y},&&\quad \beta_4:=\lambda_{y|y|x|y}^{x|x|y|y},\\[1.5pt]
& A\coloneqq  \lambda_{x|y|x|y}^{x|y|x|y}, &&&&&&\\[1.5pt]
&B_1 \coloneqq \lambda_{x|y|x|y}^{x|x|x|y},&&\quad B_2 \coloneqq \lambda_{x|y|x|y}^{x|y|x|x},&&\quad B_3 \coloneqq \lambda_{x|y|x|y}^{x|y|y|y},&&\quad B_4 \coloneqq  \lambda_{x|y|x|y}^{y|y|x|y},\\[1.5pt]
&\Gamma_1:=\lambda_{x|y|x|y}^{x|x|x|x}, &&\quad \Gamma_2 \coloneqq \lambda_{x|y|x|y}^{x|x|y|y}, &&\quad \Gamma_3 \coloneqq  \lambda_{x|y|x|y}^{y|y|x|x}, &&\quad \Gamma_4 \coloneqq \lambda_{x|y|x|y}^{y|y|y|y}.
\end{alignat*}
\end{remark}

\begin{remark}
By Proposition~\ref{lambdas como productos} we know that
\begin{align}
& A=\alpha_1 \alpha_3=\alpha_2\alpha_4,\label{relaciones para A}\\[1.5pt]
& B_1=\alpha_2 \beta_4, \quad B_2=\alpha_1 \beta_3,\quad B_3=-\alpha_4\beta_2,\quad B_4 =  -\alpha_3\beta_1,\label{relaciones para Bs}\\[1.5pt]
&\Gamma_2 = -\beta_2\beta_4\quad\text{and}\quad \Gamma_3= -\beta_1\beta_3,\label{relaciones para Gamas}
\end{align}
and by Remark~\ref{compatibilidad con epsilon} and Proposition~\ref{donde puede no anularse}, we know that
\begin{equation}\label{relacion entre los gammas}
\Gamma_4 = -(\Gamma_1 + \Gamma_2 + \Gamma_3).
\end{equation}
We will use these equalities (which in particular show that $\Gamma_1$, $\alpha_1$, $\alpha_2$, $\alpha_3$, $\alpha_4$, $\beta_1$, $\beta_2$, $\beta_3$ and $\beta_4$ determine completely $M$) without explicit mention, in order to determine the fifth column of $M$ in all the cases.
Moreover, by Remark~\ref{condicion para iso}, if $r$ is an isomorphism, then $\alpha_i\ne 0$ for all~$i$.
\end{remark}

\begin{remark} If some $\beta_i\ne0$, then by equalities~\eqref{alineados 1}, \eqref{alineados 2} and~\eqref{alineados 3} there exists an element $C\in K$ such that
\begin{equation} \label{alpha de beta}
\alpha_i-1=C \beta_i,\quad\text{for $i=1,2,3,4$.}
\end{equation}
\end{remark}

\begin{theorem} \label{Lambdas caso flip x prec y}
The following facts hold:

\begin{enumerate}[itemsep=1.1ex, topsep=1.1ex, label=\emph{\arabic*)}]

\item If $\beta_i=0$ for $i=1,2,3,4$, and $\Gamma_1=0$, then $M$ belongs to the family
$$
\qquad\qquad \begin{pmatrix}[1]
 1 & 0 & 0 & 0 & 0 & 0 & 0 & 0 & 0 \\
 0 & 0 & 0 & \phantom{{}_1}\alpha_2 & 0 & 0 & 0 & 0 & 0 \\
 0 & 0 & 0 & 0 & 0 & 0 & 1 &  0 & 0 \\
 0 & \phantom{{}_1}\alpha_1 & 0 & 0 & 0 & 0 & 0 & 0 & 0 \\
 0 & 0 & 0 & 0 & \alpha_1\alpha_3 & 0 & 0 & 0 & 0 \\
 0 & 0 & 0 & 0 & 0 & 0 & 0 &  \phantom{{}_1}\alpha_4 & 0 \\
 0 & 0 & 1 & 0 & 0 &  0 & 0 & 0 & 0 \\
 0 & 0 & 0 & 0 & 0 &  \phantom{{}_1}\alpha_3 & 0 & 0 & 0 \\
 0 & 0 & 0 & 0 & 0& 0 & 0 & 0 & 1
\end{pmatrix},
$$
where $\alpha_4 = \frac{\alpha_1\alpha_3}{\alpha_2}$, parameterized by $\alpha_1,\alpha_2,\alpha_3\in K^{\times}$.

\item If $\beta_i=0$ for $i=1,2,3,4$ and $\Gamma_1\ne 0$, then $M$ belongs to the family
$$
\qquad\qquad
\begin{pmatrix}[1]
 1 & 0 & 0 & 0 & \phantom{{}_1}\Gamma_1 & 0 & 0 & 0 & 0 \\
 0 & 0 & 0 & \phantom{{}_1}\alpha_1 & 0 & 0 & 0 & 0 & 0 \\
 0 & 0 & 0 & 0 & 0 & 0 & 1 & 0 & 0 \\
 0 & \phantom{{}_1}\alpha_1 & 0 & 0 & 0 & 0 & 0 & 0 & 0 \\
 0 & 0 & 0 & 0 & \alpha_1\alpha_3 & 0 & 0 & 0 & 0 \\
 0 & 0 & 0 & 0 & 0 & 0 & 0 & \phantom{{}_1}\alpha_3 & 0 \\
 0 & 0 & 1 & 0 & 0 & 0 & 0 & 0 & 0 \\
 0 & 0 & 0 & 0 & 0 & \phantom{{}_1}\alpha_3 & 0 & 0 & 0 \\
 0 & 0 & 0 & 0 & -\Gamma_1 & 0 & 0 & 0 & 1
\end{pmatrix},
$$
parameterized by $\alpha_1=\pm 1$, $\alpha_2=\pm 1$ and $\Gamma_1\in K^{\times}$.

\item If there exists $i$ such that $\beta_i\ne 0$ and $C = 0$ (see equality~\eqref{alpha de beta}), then either
$$
\qquad\qquad \beta_3=\beta_2\!\!\quad\text{and}\!\!\quad \beta_4=\beta_1, \!\qquad\text{or}\qquad\! \beta_3+\beta_4=\beta_1+\beta_2=0\!\!\quad\text{and}\!\!\quad \beta_3\ne -\beta_1.
$$

\smallskip

\noindent In the first case $M$ belongs to the family
$$
\qquad\qquad\begin{pmatrix}[0.96]
 1 &\phantom{{}_1} \beta_1 & 0 & \phantom{{}_1}\beta_2 & \phantom{{}_1}\Gamma_1 & 0 & 0 & 0 & 0 \\
 0 & 0 & 0 & 1 & \phantom{{}_1}\beta_1 & 0 & 0 & 0 & 0 \\
 0 & 0 & 0 & \!-\beta_2 & \!-\beta_1 \beta_2 & 0 & 1 & \phantom{{}_1}\beta_1 & 0 \\
 0 & 1 & 0 & 0 & \phantom{{}_1}\beta_2 & 0 & 0 & 0 & 0 \\
 0 & 0 & 0 & 0 & 1 & 0 & 0 & 0 & 0 \\
 0 & 0 & 0 & 0 & \!-\beta_2 & 0 & 0 & 1 & 0 \\
 0 & \!-\beta_1 & 1 & 0 & \!-\beta_1 \beta_2 & \phantom{{}_1}\beta_2 & 0 & 0 & 0 \\
 0 & 0 & 0 & 0 & \!-\beta_1 & 1 & 0 & 0 & 0 \\
 0 & 0 & 0 & 0 & 2 \beta_1 \beta_2-\Gamma_1 & \!-\beta_2 & 0 & \!-\beta_1 & 1 \\
\end{pmatrix},
$$
parameterized by $\Gamma_1,\beta_1,\beta_2\in K$ with $(\beta_1,\beta_2)\ne (0,0)$.

\smallskip

\noindent In the second case $M$ belongs to the family
$$
\qquad \qquad\begin{pmatrix}[1]
 1 & \phantom{{}_1}\beta_1 & 0 & -\beta_1 & \phantom{{}_1}\beta_1 \beta_3 & 0 & 0 & 0 & 0 \\
 0 & 0 & 0 & 1 & -\beta_3 & 0 & 0 & 0 & 0 \\
 0 & 0 & 0 & \phantom{{}_1}\beta_1 & -\beta_1 \beta_3 & 0 & 1 & \! -\beta_3 & 0 \\
 0 & 1 & 0 & 0 & \phantom{{}_1}\beta_3 & 0 & 0 & 0 & 0 \\
 0 & 0 & 0 & 0 & 1 & 0 & 0 & 0 & 0 \\
 0 & 0 & 0 & 0 & \phantom{{}_1}\beta_1 & 0 & 0 & 1 & 0 \\
 0 & \!-\beta_1 & 1 & 0 & -\beta_1 \beta_3 & \phantom{{}_1}\beta_3 & 0 & 0 & 0 \\
 0 & 0 & 0 & 0 &\! -\beta_1 & 1 & 0 & 0 & 0 \\
 0 & 0 & 0 & 0 & \phantom{{}_1}\beta_1 \beta_3 &\! -\beta_3 & 0 & \phantom{{}_1} \beta_3 & 1 \\
\end{pmatrix},
$$
parameterized by $\beta_1,\beta_3\in K$ with $\beta_1+\beta_3\ne 0$.

\item If $C\ne 0$ and some $\beta_i\ne 0$, then
\begin{equation}\label{ultimos casos}
\qquad\quad\beta_1(\alpha_1+1)(\beta_3\alpha_1-\beta_2-\Gamma_1C)=0.
\end{equation}

\begin{enumerate}[itemsep=1.4ex, topsep=1.4ex, label=\emph{\alph*)}]

\item[\emph{4a)}] If $C\ne 0$, some $\beta_i\ne 0$ and $\beta_1= 0$, then either
$$
\qquad\qquad \Gamma_1= \alpha_2\beta_4/C, \!\qquad\text{or}\qquad\! \beta_2=0\!\!\quad\text{and}\!\!\quad \Gamma_1\ne \beta_4/C.
$$

\smallskip

\noindent In the first case $M$ belongs to the family
$$
\qquad\qquad\begin{pmatrix}[1]
 1 & 0 & 0 & \phantom{{}_1}\beta_2 & \phantom{{}_1}\frac{\alpha_2 \beta_4}{C} & 0 & 0 & 0 & 0 \\
 0 & 0 & 0 & \phantom{{}_1}\alpha_2 &\phantom{{}_1} \alpha_2 \beta_4 & 0 & 0 & 0 & 0 \\
 0 & 0 & 0 & \!-\beta_2 & \!-\beta_2 \beta_4 & 0 & 1 & \phantom{{}_1}\beta_4 & 0 \\
 0 & 1 & 0 & 0 & \phantom{{}_1}\beta_3 & 0 & 0 & 0 & 0 \\
 0 & 0 & 0 & 0 &\phantom{{}_1} \alpha_2 \alpha_4 & 0 & 0 & 0 & 0 \\
 0 & 0 & 0 & 0 &\! -\alpha_4 \beta_2 & 0 & 0 & \phantom{{}_1}\alpha_4 & 0 \\
 0 & 0 & 1 & 0 & 0 & \phantom{{}_1}\beta_3 & 0 & 0 & 0 \\
 0 & 0 & 0 & 0 & 0 & \phantom{{}_1}\alpha_3 & 0 & 0 & 0 \\
 0 & 0 & 0 & 0 & \beta_2 \beta_4-\frac{\alpha_2 \beta_4}{C} &\! -\beta_3 & 0 &\! -\beta_4 & 1 \\
\end{pmatrix},
$$
where $\alpha_i=1+C\beta_i$ and $\beta_3=\beta_2+\alpha_2\beta_4$, parameterized by $C\in K^{\times}$ and $\beta_2,\beta_4\in K$ with $(\beta_2,\beta_4)\ne (0,0)$ such that $C\beta_i\ne -1$ for all~$i$.

\smallskip

\noindent In the second case $M$ belongs to the family
$$
\qquad\qquad\begin{pmatrix}[1]
 1 & 0 & 0 & 0 & \phantom{{}_1}\Gamma_1 & 0 & 0 & 0 & 0 \\
 0 & 0 & 0 & 1 & \phantom{{}_1}\beta_4 & 0 & 0 & 0 & 0 \\
 0 & 0 & 0 & 0 & 0 & 0 & 1 & \phantom{{}_1}\beta_4 & 0 \\
 0 & 1 & 0 & 0 & \phantom{{}_1}\beta_4 & 0 & 0 & 0 & 0 \\
 0 & 0 & 0 & 0 &\!\!\!\! -1 & 0 & 0 & 0 & 0 \\
 0 & 0 & 0 & 0 & 0 & 0 & 0 & \!\!\!\! -1 & 0 \\
 0 & 0 & 1 & 0 & 0 & \phantom{{}_1}\beta_4 & 0 & 0 & 0 \\
 0 & 0 & 0 & 0 & 0 & \!\!\!\!-1 & 0 & 0 & 0 \\
 0 & 0 & 0 & 0 &\! -\Gamma_1 & \!-\beta_4 & 0 & \! -\beta_4 & 1 \\
\end{pmatrix},
$$
parameterized by $\Gamma_1\in K\setminus \{-\beta_4^2/2\}$ and $\beta_4\in K^{\times}$.

\item[\emph{4b)}] If $C\ne 0$, $\beta_1\ne 0$ and $\alpha_1+1=0$, then either
 $$
\qquad\qquad \alpha_2=-1, \!\qquad\text{or}\qquad\! \alpha_2\beta_4 C-C^2 \Gamma_1=-2\!\!\quad\text{and}\!\!\quad \alpha_2 \ne -1.
$$

\smallskip

\noindent In the first case $M$ belongs to the family
$$
\qquad\qquad\begin{pmatrix}[0.96]
 1 & \phantom{{}_1}\beta_1 & 0 & \phantom{{}_1} \beta_1 &\phantom{{}_1} \Gamma_1 & 0 & 0 & 0 & 0 \\
 0 & 0 & 0 &\!\!\!\! -1 & \!-\beta_4 & 0 & 0 & 0 & 0 \\
 0 & 0 & 0 &\! -\beta_1 &\! -\beta_1 \beta_4 & 0 & 1 & \phantom{{}_1}\beta_4 & 0 \\
 0 & \!\!\!\! -1 & 0 & 0 & \!-\beta_3 & 0 & 0 & 0 & 0 \\
 0 & 0 & 0 & 0 & \!-\alpha_4 & 0 & 0 & 0 & 0 \\
 0 & 0 & 0 & 0 & 2 \!-\alpha_4\beta_1 & 0 & 0 & \phantom{{}_1}\alpha_4 & 0 \\
 0 & \!-\beta_1 & 1 & 0 & \!-\beta_1 \beta_3 &\phantom{{}_1} \beta_3 & 0 & 0 & 0 \\
 0 & 0 & 0 & 0 & \!-\alpha_3\beta_1 & \phantom{{}_1}\alpha_3 & 0 & 0 & 0 \\
 0 & 0 & 0 & 0 & \beta_1 (\beta_3+\beta_4)-\Gamma_1 & \!-\beta_3 & 0 & \!-\beta_4 & 1 \\
\end{pmatrix},
$$
where $\alpha_i= 1+C\beta_i=1-\frac{2 \beta_i}{\beta_1}$, parameterized by $\beta_1\in K^{\times}$, $\Gamma_1\in K$ and $\beta_3,\beta_4\in K\setminus\{\beta_1/2\}$.

\smallskip

\noindent In the second case $M$ belongs to the family
$$
\qquad\qquad\begin{pmatrix}[0.96]
 1 & \phantom{{}_1}\beta_1 & 0 & \phantom{{}_1}\beta_2 & \phantom{{}_1}\Gamma_1 & 0 & 0 & 0 & 0 \\
 0 & 0 & 0 & \phantom{{}_1}\alpha_2 & \phantom{{}_1}\alpha_2\beta_4 & 0 & 0 & 0 & 0 \\
 0 & 0 & 0 & \!-\beta_2 & \!-\beta_2 \beta_4 & 0 & 1 & \phantom{{}_1}\beta_4 & 0 \\
 0 & \!\!\!\!-1 & 0 & 0 & \!-\beta_3 & 0 & 0 & 0 & 0 \\
 0 & 0 & 0 & 0 & \phantom{{}_1}\alpha_2\alpha_4 & 0 & 0 & 0 & 0 \\
 0 & 0 & 0 & 0 & \!-\alpha_4\beta_2 & 0 & 0 & \phantom{{}_1}\alpha_4 & 0 \\
 0 & \!-\beta_1 & 1 & 0 & \!-\beta_1 \beta_3 & \phantom{{}_1}\beta_3 & 0 & 0 & 0 \\
 0 & 0 & 0 & 0 & 2 \beta_3-\beta_1 & \phantom{{}_1}\alpha_3 & 0 & 0 & 0 \\
 0 & 0 & 0 & 0 & \frac{1}{2} \beta_1 (-\beta_1+2 \beta_3+\beta_4) & \!-\beta_3 & 0 & \!-\beta_4 & 1 \\
\end{pmatrix},
$$
where $\Gamma_1=\frac{1}{2}(\beta_1^2-\beta_4 \beta_1+2 \beta_2 \beta_4)$, $\beta_3=\frac{2 \beta_2 \beta_4}{\beta_1}+\beta_1-\beta_2-\beta_4$ and  $\alpha_i=1-\frac{2 \beta_i}{\beta_1}$, parameterized by $\beta_1\in K^{\times}$, $\beta_2\in K\setminus \{\beta_1\}$ and $\beta_4\in K$ such that $\beta_i\ne \beta_1/2$ for all~$i$.

\item[\emph{4c)}] If $C\ne 0$, $\beta_1\ne 0$ and $\alpha_1+1\ne 0$, then $M$ belongs to the family
$$
\qquad\qquad\begin{pmatrix}[0.96]
 1 & \phantom{{}_1}\beta_1 & 0 & \phantom{{}_1}\beta_2 &\phantom{{}_1}\Gamma_1 & 0 & 0 & 0 & 0 \\
 0 & 0 & 0 & \phantom{{}_1}\alpha_2 & \phantom{{}_1}\alpha_2\beta_4 & 0 & 0 & 0 & 0 \\
 0 & 0 & 0 & \!-\beta_2 & \!-\beta_2 \beta_4 & 0 & 1 & \phantom{{}_1}\beta_4 & 0 \\
 0 & \phantom{{}_1}\alpha_1 & 0 & 0 & \phantom{{}_1}\alpha_1\beta_3 & 0 & 0 & 0 & 0 \\
 0 & 0 & 0 & 0 & \phantom{{}_1}\alpha_2\alpha_4 & 0 & 0 & 0 & 0 \\
 0 & 0 & 0 & 0 & \!-\alpha_4\beta_2  & 0 & 0 & \phantom{{}_1}\alpha_4 & 0 \\
 0 & \!-\beta_1 & 1 & 0 & \!-\beta_1 \beta_3 & \phantom{{}_1}\beta_3 & 0 & 0 & 0 \\
 0 & 0 & 0 & 0 & \!-\alpha_3\beta_1 & \phantom{{}_1}\alpha_3 & 0 & 0 & 0 \\
 0 & 0 & 0 & 0 & \frac{\beta_4 C \beta_2+\beta_2-\beta_3}{C} & \!-\beta_3 & 0 &\! -\beta_4 & 1 \\
\end{pmatrix},
$$
where $\alpha_i=\beta_iC+1$ for all~$i$,  $\beta_4=(\beta_1 - \beta_2 + \beta_3 + \beta_1 \beta_3 C)/(1 + \beta_2 C)$ and $\Gamma_1= \frac{\beta_3\alpha_1-\beta_2}{C}$, parameterized by $C\in K^{\times}$, $\beta_1\in K^{\times}\setminus \{-1/C\}$ and $\beta_2,\beta_3\in K \setminus \{-1/C\}$.
\end{enumerate}
\end{enumerate}
\end{theorem}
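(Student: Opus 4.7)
The plan is to apply Proposition~\ref{condicion para braided} to every configuration of six subintervals of $[x,y]$, translate the resulting identities into polynomial relations among the nine unknowns $\alpha_1,\alpha_2,\alpha_3,\alpha_4,\beta_1,\beta_2,\beta_3,\beta_4,\Gamma_1$ (the remaining entries of $M$ are fixed by~\eqref{relaciones para A}-\eqref{relacion entre los gammas}), and then solve the resulting finite system by a structured case analysis. Since every subinterval of $[x,y]$ has length $0$ or $1$, the total length $\mathfrak{h}[a,b]+\mathfrak{h}[c,d]+\mathfrak{h}[e,f]$ in~\eqref{eq braided} takes only the values $0,1,2,3$, so the braid system reduces to finitely many polynomial equations.

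First I would observe that the flip hypothesis makes items~3),~6),~9) of Subsection~\ref{small intervals} automatic and items~4),~7),~10) equivalent to~2),~5),~8), so the full length-$1$ content of the braid equation is captured by~\eqref{alineados 1}-\eqref{alineados 3}. Combined with $\alpha_i\ne 0$, these three identities yield the dichotomy ``all $\beta_i$ vanish'' or ``there exists a unique $C\in K$ with $\alpha_i-1=C\beta_i$ for every $i$'', which drives the split between cases~1)-2) and~3)-4). Next I would write out the length-$2$ and length-$3$ braid equations, substitute~\eqref{relaciones para A}-\eqref{relacion entre los gammas} and~\eqref{alpha de beta}, and perform a systematic elimination. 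One distinguished length-$3$ identity factors as
$$
\beta_1(\alpha_1+1)(\beta_3\alpha_1-\beta_2-\Gamma_1 C)=0,
$$
which is precisely~\eqref{ultimos casos} and dictates the trichotomy 4a), 4b), 4c); entirely parallel factorizations, obtained by exploiting the symmetries of the flip configuration, produce the binary splits within items~3),~4a) and~4b).

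I would then treat the nine families in the order stated. For items~1) and~2), once every $\beta_i=0$ the only surviving constraints are $\alpha_1\alpha_3=\alpha_2\alpha_4$ together with a single quadratic relation tying $\Gamma_1$ to the $\alpha_i$; the latter is trivial when $\Gamma_1=0$ and forces $\alpha_1^2=\alpha_2^2=1$ and $\alpha_1\alpha_3=1$ when $\Gamma_1\ne 0$. For item~3), with $C=0$ so that $\alpha_i=1$ for all $i$, the length-$2$ conditions collapse to a single bilinear relation whose solution set is exactly the two stated alternatives. For item~4), the factorization~\eqref{ultimos casos} produces the three subcases; in each one, substitution of $\alpha_i=1+C\beta_i$ into the remaining length-$3$ equations determines the fifth column of $M$ by direct computation and isolates the further binary splits inside 4a) and 4b). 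The converse direction in each family is then obtained by substituting the displayed matrices into~\eqref{eq braided} and verifying the identity for every subinterval configuration of $[x,y]$.

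The main obstacle is the derivation of~\eqref{ultimos casos} from the length-$3$ system in case~4): once this factorization is in hand, each subcase becomes essentially linear, but without it the classification is opaque. The bulk of the work lies in carrying out this elimination transparently and in simultaneously tracking the parallel factorizations that distinguish ``first'' from ``second'' families in items~3),~4a) and~4b).
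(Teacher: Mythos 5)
Your plan follows the paper's proof essentially verbatim: the length-one conditions give the dichotomy between ``all $\beta_i=0$'' and ``$\alpha_i-1=C\beta_i$ for a common $C$'', two specializations of~\eqref{eq braided} (the paper's~\eqref{identidad con gamma 1} and~\eqref{identidad con gamma 2}) yield the factorization~\eqref{ultimos casos}, and the case analysis then splits exactly as you describe, with the converse checked by direct substitution. Two small corrections are in order. First, the identity that factors as~\eqref{ultimos casos} comes from a configuration with $\mathfrak{h}[a,b]+\mathfrak{h}[c,d]+\mathfrak{h}[e,f]=2$ (namely $a=b=c=e=x$, $d=f=y$), not $3$, and the factorization appears only after substituting $\alpha_i=1+C\beta_i$ and $\alpha_1\alpha_3=\alpha_2\alpha_4$. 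Second, in item~2) the constraints forced by $\Gamma_1\ne 0$ are $\alpha_1=\alpha_2\in\{\pm1\}$ and $\alpha_3=\alpha_4\in\{\pm1\}$ \emph{independently}; your claim that $\alpha_1\alpha_3=1$ is forced would wrongly shrink that family, whose central entry $\alpha_1\alpha_3$ can equal $-1$.
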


\begin{proof}
1) \enspace Assume that $\beta_i=0$ for $i=1,2,3,4$, and that $\Gamma_1=0$. Then by the above discussion $\Gamma_2=\Gamma_3=\Gamma_4=0$. So, $M$ depends on $\alpha_i\in K^{\times}$, that satisfy the condition $\alpha_1\alpha_3=\alpha_2\alpha_4$. So, we obtain the family in item~1).

\medskip

Before considering the other cases we derive the two equalities~\eqref{identidad con gamma 1} and~\eqref{identidad con gamma 2}. For $a=b=c=e=x$, $g=h=i=j=k=l=x$ and $d=f=y$, equal\-i\-ty~\eqref{eq braided} yields
\begin{equation*}\label{gamma1}
\sum_{ z,v\in [x,y] } \lambda_{x|x|x|y}^{x|z|x|x} \lambda_{x|x|x|y}^{x|v|x|x} \lambda_{x|z|x|v}^{x|x|x|x}
=\sum_{ z,v\in [x,y] } \lambda_{x|y|x|y}^{x|v|x|z} \lambda_{x|x|x|v}^{x|x|x|x} \lambda_{x|x|x|z}^{x|x|x|x}.
\end{equation*}
Expanding this equality and using that $A=\alpha_1\alpha_3$, $B_1=\alpha_2\beta_4$ and $B_2=\alpha_1\beta_3$, we obtain
\begin{equation*}
\beta_1^2+\beta_1\alpha_1\beta_1+\alpha_1 \beta_1\beta_2+\alpha_1\alpha_1\Gamma_1=\Gamma_1+\alpha_1\beta_3\beta_1+\alpha_2\beta_4\beta_1+
\alpha_1\alpha_3\beta_1\beta_1,
\end{equation*}
which we write as
\begin{equation} \label{identidad con gamma 1}
\Gamma_1(\alpha_1^2-1)=\beta_1(\alpha_1\beta_3+\alpha_2\beta_4+\alpha_1\alpha_3\beta_1-\beta_1-\beta_1\alpha_1-\alpha_1\beta_2).
\end{equation}
For $a=c=d=e=x$, $g=h=i=j=k=l=x$ and $b=f=y$, equality~\eqref{eq braided} yields
\begin{equation*}
\sum_{ y,v\in [x,y] } \lambda_{x|y|x|x}^{x|x|x|y} \lambda_{x|y|x|y}^{x|v|x|x} \lambda_{x|x|x|v}^{x|x|x|x}
=\sum_{ y,v\in [x,y] } \lambda_{x|x|x|y}^{x|v|x|x} \lambda_{x|y|x|v}^{x|x|x|y} \lambda_{x|y|x|x}^{x|x|x|x}.
\end{equation*}
Expanding this equality and using that $B_1=\alpha_2\beta_4$ and $B_2=\alpha_1\beta_3$, we obtain
$$
\beta_2\beta_1+\beta_2\alpha_1\beta_1+\alpha_2\Gamma_1+\alpha_2(\alpha_1\beta_3)\beta_1=
\beta_1\beta_2+\alpha_1\Gamma_1+\beta_1\alpha_2\beta_2+\alpha_1(\alpha_2\beta_4)\beta_2,
$$
which simplifies to
\begin{equation}\label{identidad con gamma 2}
\alpha_1(\beta_1\beta_2-\Gamma_1)=\alpha_2(\beta_1\beta_2-\Gamma_1+\alpha_1(\beta_2\beta_4-\beta_1\beta_3)).
\end{equation}

\smallskip

\noindent 2)\enspace Assume that $\beta_i=0$ for $i=1,2,3,4$ and $\Gamma_1\ne 0$. Then equality~\eqref{identidad con gamma 1} implies that $\alpha_1^2=1$ and equality~\eqref{identidad con gamma 2} implies that $\alpha_1=\alpha_2$. Furthermore, similar calculations as above, using equality~\eqref{eq braided} with $b=c=d=f=y$, $g=h=i=j=k=l=y$ and $a=e=x$, give $\alpha_3=\alpha_4$, and using equality~\eqref{eq braided} with $a=b=d=f=y$, $c=e=x$ and $g=h=i=j=k=l=y$, give $\alpha_4^2=1$. Moreover, by~equalities~\eqref{relaciones para Bs} and~\eqref{relaciones para Gamas} we have
$$
B_1=B_2=B_3=B_4=\Gamma_2=\Gamma_3=0.
$$
So, in this case we obtain for $M$ the family in item 2).

\smallskip

\noindent 3)\enspace  Assume that some $\beta_i\ne0$ and $C=0$. Then $\alpha_j=1$ for all $j$ and equalities~\eqref{identidad con gamma 1} and~\eqref{identidad con gamma 2} yield
$$
\beta_1(\beta_3+\beta_4-\beta_1-\beta_2)=0\quad\text{and}\quad \beta_2\beta_4-\beta_1\beta_3=0.
$$
Moreover, a computation using equality~\eqref{eq braided} with $a=c=e=f=x$,  $b=d=y$ and $g=h=i=j=k=l=x$, shows that
\begin{equation*}\label{multiplicacion por beta 2}
\beta_2(\beta_1+\beta_2-\beta_3-\beta_4)=0;
\end{equation*}
a computation using equality~\eqref{eq braided} with $a=b=d=f=y$, $c=e=x$ and $g=h=i=j=k=l=y$, gives
\begin{equation*}\label{multiplicacion por beta 4}
\beta_4(\beta_1+\beta_2-\beta_3-\beta_4)=0;
\end{equation*}
and a computation using equality~\eqref{eq braided} with $b=d=e=f=y$, $a=c=x$ and $g=h=i=j=k=l=y$, gives
\begin{equation*}\label{multiplicacion por beta 3}
\beta_3(-\beta_1-\beta_2+\beta_3+\beta_4)=0.
\end{equation*}
Since at least one  $\beta_i$ is non zero, from these facts it follows that
\begin{equation}\label{caso tres}
\beta_3+\beta_4=\beta_1+\beta_2\quad\text{and}\quad \beta_2\beta_4-\beta_1\beta_3=0.
\end{equation}
A straightforward computation using \eqref{caso tres} shows that $(\beta_3-\beta_2)(\beta_1+\beta_2)=0$, and so either
$$
\beta_3=\beta_2\!\!\quad\text{and}\!\!\quad \beta_4=\beta_1, \!\qquad\text{or}\qquad\! \beta_3+\beta_4=\beta_1+\beta_2=0\!\!\quad\text{and}\!\!\quad \beta_3\ne -\beta_1.
$$
In the first case we obtain for M the first family in item~3). In the second case equality~\eqref{eq braided} with $a=c=e=x$, $b=d=f=y$ and $g=h=i=j=k=l=x$, gives
$$
2(\beta_1+\beta_3)(\beta_1\beta_3-\Gamma_1)=0.
$$
Since $\beta_3 \ne -\beta_1$, $\Gamma_1=\beta_1\beta_3$ and we obtain for $M$ the second family in item~3).

\smallskip

\noindent 4) \enspace Assume that $C\ne 0$ and some $\beta_i\ne0$. Hence, by equalities~\eqref{alpha de beta} and the fact that~$\alpha_1 \alpha_3=\alpha_2\alpha_4$, we have
$$
\alpha_2\beta_4 - \beta_1=\alpha_2\frac{\alpha_4-1}{C}-\beta_1=\frac{\alpha_1\alpha_3-\alpha_2}{C}-\beta_1=C\beta_1\beta_3 +\beta_3-\beta_2.
$$
Using this equalities, \eqref{alpha de beta} and~\eqref{identidad con gamma 1}, we obtain that
\begin{align*}
\Gamma_1 C\beta_1(\alpha_1+1) & = \beta_1(\alpha_1\beta_3+\alpha_2\beta_4+\alpha_1\alpha_3\beta_1-\beta_1-\beta_1\alpha_1-\alpha_1\beta_2)\\
&=\beta_1\bigl((1+C\beta_1)\beta_3+C\beta_1\beta_3 +\beta_3-\beta_2 +(1+C\beta_1)(1+C\beta_3)\beta_1\\
&-\beta_1(1+C\beta_1)-(1+C\beta_1)\beta_2\bigr)\\
&= \beta_1(C^2\beta_1^2\beta_3+3C\beta_1\beta_3-C\beta_1\beta_2+2\beta_3-2\beta_2)\\
&= \beta_1(C\beta_1+2)(\beta_3+C\beta_1\beta_3-\beta_2)\\
&=\beta_1(\alpha_1+1)(\beta_3\alpha_1-\beta_2),
\end{align*}
and so
\begin{equation*}
\beta_1(\alpha_1+1)(\beta_3\alpha_1-\beta_2-\Gamma_1C)=0.
\end{equation*}

\smallskip

4a) \enspace Assume that $C\ne 0$, some $\beta_i\ne 0$ and $\beta_1= 0$. Then  $\alpha_1=1$ and equality~\eqref{identidad con gamma 2} reduces to $-\Gamma_1=\alpha_2(-\Gamma_1+\beta_2 \beta_4)$. Using this and~\eqref{alpha de beta}, we obtain that
$$
\beta_2(C\Gamma_1-\alpha_2\beta_4)= \beta_2C\Gamma_1 + \Gamma_1 - \alpha_2\Gamma_1 =0.
$$
So $\Gamma_1= \alpha_2\beta_4/C$ or $\beta_2=0$. If $\Gamma_1= \alpha_2\beta_4/C$, then using that $\alpha_1=1$ and equal\-it\-ies~\eqref{relaciones para A} and~\eqref{alpha de beta}, we conclude that $\beta_3=\beta_2+\alpha_2\beta_4$. Hence, we obtain for~$M$ the first family in item~4a). Otherwise $\beta_2=0$ (which by~\eqref{relaciones para A} and \eqref{alpha de beta} implies that $\alpha_2=1$, $\alpha_3=\alpha_4$ and $\beta_3=\beta_4$) and $\Gamma_1\ne \beta_4/C$. Using now equality~\eqref{eq braided} with~ $c=e=x$, $a=b=d=f=y$ and $g=h=i=j=k=l=y$, we obtain that
$$
-\beta_4 (2 + \beta_4 C) (\beta_4 - C \Gamma_1)=0.
$$
But $\beta_4 - C \Gamma_1 \ne 0$, and  $\beta_4=0$ implies that $\beta_3=0$, which is impossible since at least one of the $\beta_i$'s is non zero. So we are left with $\beta_4\ne 0$ and $C=-2/\beta_4$, which yields for $M$ the second family in~4a).

\smallskip

4b) \enspace Assume that $C\ne 0$, $\beta_1\!\ne \!0$ and $\alpha_1+1\!=\!0$. Then $\beta_1\!=\!-2/C$ by equality~\eqref{alpha de beta}, and equal\-it\-y~\eqref{identidad con gamma 2} reads
$$
0=(\alpha_2+1)(\beta_1\beta_2-\Gamma_1)-\alpha_2(\beta_2\beta_4-\beta_1\beta_3).
$$
But since $\alpha_3=-\alpha_2\alpha_4$ by equality~\eqref{relaciones para A}, we have
$$
\beta_2\beta_4-\beta_1\beta_3=\frac{1}{C^2}((\alpha_2-1)(\alpha_4-1)+2(\alpha_3-1))=-\frac{1}{C^2}(\alpha_2+1)(\alpha_4+1),
$$
where the first equality holds by equality~\eqref{alpha de beta}, and therefore
\begin{equation}\label{eq: auxiliar}
0=(\alpha_2+1)\left(\beta_1\beta_2-\Gamma_1+\frac{1}{C^2}\alpha_2(\alpha_4+1)\right).
\end{equation}
Since
$$
C^2 \beta_1\beta_2+\alpha_2(\alpha_4+1)= -2\alpha_2 + 2 + \alpha_2(C \beta_4+2)= \alpha_2C \beta_4 + 2,
$$
because $C^2 \beta_1\beta_2=-2C\beta_2=-2\alpha_2+2$ and $\alpha_4+1=C \beta_4+2$, from equality~\eqref{eq: auxiliar} it follows that
$$
0=(\alpha_2+1)(2+\alpha_2\beta_4 C-C^2 \Gamma_1).
$$
So either $\alpha_2=-1$ or $\alpha_2\beta_4 C-C^2 \Gamma_1=-2$ and $\alpha_2\ne -1$. If~$\alpha_2=-1$, then we obtain for $M$ the first family in item~4b). Otherwise a direct computation using that $C=-2/\beta_1$ and equality~\eqref{alpha de beta}, shows that
$$
\Gamma_1=\frac{1}{2}(\beta_1^2-\beta_4 \beta_1+2 \beta_2 \beta_4),
$$
and a direct computation using that $C=-2/\beta_1$ and equalities~\eqref{identidad con gamma 1} and~\eqref{alpha de beta}, shows that
$$
\beta_3=\frac{2 \beta_2 \beta_4}{\beta_1}+\beta_1-\beta_2-\beta_4.
$$
So, we obtain for $M$ the second family in item~4b).

\smallskip

4c) \enspace Assume that $C\ne 0$, $\beta_1\ne 0$ and $\alpha_1+1\ne 0$. Then by equality~\eqref{ultimos casos}, we have~$\beta_3\alpha_1-\beta_2-\Gamma_1C=0$, and so $\Gamma_1= \frac{\beta_3\alpha_1-\beta_2}{C}$. Moreover, by equality~\eqref{alpha de beta} we know that $\alpha_i=\beta_iC+1$ for all~$i$, and using equalities~\eqref{relaciones para A} and \eqref{alpha de beta} it is easy to check that $\beta_4=(\beta_1 - \beta_2 + \beta_3 + \beta_1 \beta_3 C)/(1 + \beta_2 C)$. So, we obtain for $M$ the family in item 4c).
\end{proof}

\begin{corollary}\label{el orden x menor que y}
Let $X$ be the poset $(\{x,y\},\le)$, where $x<y$, let $D$ be the incidence coalgebra of $X$ and let $r\colon D \ot D \longrightarrow D\ot D$ be a map. If $(D,r)$ is a non-degenerate braided set, then $r_{|}$ is the flip and the matrix $M$ associated with $r$ via~\eqref{matriz M} belongs to one of the families in the previous theorem. On the other hand each member $M$ of the families yields a solution of the Yang-Baxter equation.
\end{corollary}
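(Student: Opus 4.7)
My plan is to prove the two directions of the corollary separately. For the necessity direction, I would first observe that $X=\{x,y\}$ with $x<y$ is a two-element chain whose only poset automorphism is the identity. By Corollary~\ref{son iso de ordenes}, for every $a\in X$ the maps ${}^a\hf(-)$ and $(-)\hs^a$ are order automorphisms of $X$, hence both equal to $\id$. Therefore $r_{|}(a,c)=({}^a\hf c,a\hs^c)=(c,a)$, which is the flip. Once this is established, the classification of the associated matrix $M$ into one of the listed families follows directly from Theorem~\ref{Lambdas caso flip x prec y}.

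For the sufficiency direction I would fix a matrix $M$ drawn from one of the families and let $r\colon D\ot D\to D\ot D$ be the associated linear map. The first part of the argument is to verify that $r$ is a non-degenerate coalgebra automorphism via Theorem~\ref{principal para no degenerados}. Since $r_{|}$ is the flip and $\{x,y\}$ has trivial order-automorphism group, items~1) and~2) of that theorem hold trivially. Item~3) (the support condition) is visible as the pattern of forced zeros exhibited in Remark~\ref{ceros en M}, which is built into every family. Item~4) (the product rule) reduces in this setting to the identities~\eqref{relaciones para A}--\eqref{relaciones para Gamas}, while the counit compatibility~\eqref{ordenes: r preserva la counidad} reduces to~\eqref{relacion entre los gammas}; both are satisfied by construction. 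The remaining non-vanishing condition $\lambda_{a|b|c|d}^{{}^a\!c|{}^a\!d|a\hs^c|b\hs^c}\in K^{\times}$ translates to $\alpha_1,\alpha_2,\alpha_3,\alpha_4\in K^{\times}$, which is visible in every family.

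The last and main step is to verify that $r$ satisfies the braid equation~\eqref{eq braided} of Proposition~\ref{condicion para braided} for every triple of subintervals of $[x,y]^3$. Since $[x,y]$ has height one, each of $[a,b]$, $[c,d]$, $[e,f]$ has height in $\{0,1\}$, and only finitely many configurations need be checked. The cases with $\mathfrak{h}[a,b]+\mathfrak{h}[c,d]+\mathfrak{h}[e,f]\le 1$ are exactly the ten equations enumerated in Subsection~\ref{small intervals}, which were the defining relations used to produce the families; hence they hold by construction. The main obstacle is the remaining cases, where the height sum equals $2$ or $3$: for each of the six families one has to expand both sides of~\eqref{eq braided}, substitute the explicit parameterization of the family, and check that the resulting polynomial identity in the free parameters holds. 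This is forced by the relations~\eqref{relaciones para A}--\eqref{relacion entre los gammas}, by the auxiliary identities~\eqref{identidad con gamma 1} and~\eqref{identidad con gamma 2} that already appear in the proof of Theorem~\ref{Lambdas caso flip x prec y}, and by the family-specific constraints. Each individual verification is conceptually routine but computationally heavy, and in practice I would delegate the expansions to a symbolic computation system.
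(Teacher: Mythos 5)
Your proposal is correct and follows essentially the same route as the paper: the flip assertion comes from Corollary~\ref{son iso de ordenes} because the two-element chain has only the identity order automorphism, the classification is Theorem~\ref{Lambdas caso flip x prec y}, and the converse is a finite symbolic verification (the paper phrases it as checking $M_1M_2M_1-M_2M_1M_2=0$ with $M_1=\id_D\ot M$, $M_2=M\ot\id_D$, which is the matrix form of the case-by-case check of~\eqref{eq braided} you describe). Your additional verification via Theorem~\ref{principal para no degenerados} that each family member is a non-degenerate coalgebra automorphism is a harmless strengthening not required by the statement, which only asserts that $M$ yields a solution of the Yang--Baxter equation.
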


\begin{proof}
The first assertion follows immediately from Corollary~\ref{son iso de ordenes}, the second one is a corollary of Theorem~\ref{Lambdas caso flip x prec y}, and the third one follows by a direct computation, that can be done with the aid of a computer algebra system (set $M_1\coloneqq \id_D\ot M$ and $M_2\coloneqq M\ot \id_D$, and verify that $M_1M_2M_1-M_2M_1M_2=0$).
\end{proof}


\section[A case of the configuration $x \prec y\succ z$]{A case of the configuration $\bm{x \prec y\succ z}$}\label{seccion la configuracion x<y>z no flip}
Let $(X,\le)$ and $D$ be as in Section~2, let $(D,r)$ be a non-degenerate braided set and let $x, y,z\in X$ such that $x\prec y\succ z$.
Let $\phi$ be the permutation of $\{x,y,z\}$ that interchanges $x$ and $z$. In this section we determine all the possibilities for the coefficients
$\lambda_{a_1|b_1|a_2|b_2}^{a_3|b_3|a_4|b_4}$, with $a_i,b_i\in\{x,y,z\}$ and $a_i\le b_i$, under the assumption that~${}^a\hf(-)= (-)\hs^b=\phi$ for
all $a,b\in \{x,y,z\}$. Let $f(x,x):=0$, $f(x,y)\coloneqq 1$, $f(y,y)\coloneqq 2$, $f(z,y)\coloneqq 3$ and $f(z,z)\coloneqq 4$. We can codify the
$625$ coefficients $\lambda_{a_1|b_1|a_2|b_2}^{a_3|b_3|a_4|b_4}$ in a~$25\times25$ matrix $M$, setting
\begin{equation}\label{segunda Matriz M}
M_{i,j}=\lambda_{a_1|b_1|a_2|b_2}^{a_3|b_3|a_4|b_4}\qquad \parbox[c]{5cm}{if $i=5f(a_1,b_1)+f(a_2,b_2)+1$ and $j=5f(a_3,b_3)+f(a_4,b_4)+1$.}
\end{equation}
Let $\alpha_l$,  $\alpha_r$, $\beta_l$ and $\beta_r$ be the maps defined in~\eqref{definicion de a sub r} and~\eqref{definicion de a sub l}. We begin by showing that $M$ only depends on the entries
\begin{alignat*}{4}
&\Gamma_1 \coloneqq\lambda_{x|y|x|y}^{y|y|y|y},&&\quad  \Gamma_7\coloneqq\lambda_{x|y|z|y}^{y|y|y|y}, &&\quad \Gamma_{10}\coloneqq\lambda_{z|y|x|y}^{y|y|y|y},&&\quad \Gamma_{16}\coloneqq\lambda_{z|y|z|y}^{y|y|y|y},\\
& \alpha_1\coloneqq\alpha_l(x)(x,y),&&\quad \alpha_4\coloneqq\alpha_l(y)(x,y),&& \quad \alpha_6\coloneqq\alpha_l(z)(x,y),&& \\
&\beta_1\coloneqq\beta_l(x)(x,y),&& \quad\beta_2\coloneqq\beta_r(x)(x,y),&&\quad \beta_3\coloneqq\beta_r(y)(x,y),&&\quad
\beta_4\coloneqq\beta_l(y)(x,y),\\
&\beta_5\coloneqq\beta_r(z)(x,y),&& \quad\beta_6\coloneqq\beta_l(z)(x,y),&&\quad \beta_7\coloneqq\beta_l(x)(z,y),
&&\quad \beta_8\coloneqq\beta_r(x)(z,y),\\
&\beta_9\coloneqq\beta_l(y)(z,y),&&\quad \beta_{10}\coloneqq\beta_r(y)(z,y), &&\quad \beta_{11}\coloneqq\beta_r(z)(z,y),
&&\quad \beta_{12}\coloneqq\beta_l(z)(z,y)
\end{alignat*}
and the parameters
$$
C(x,y)\coloneqq \frac{\alpha_l(x)(x,y)}{\alpha_l^{(1)}(x)(x,y)}\quad\text{and}\quad C_2=\frac{\alpha_r(x)(x,y)}{\alpha_l(x)(x,y)},
$$
where $\alpha_l^{(1)}$ is as in~\eqref{a sub l a la i y a sub r a la i}. For this we first note that by Proposition~\ref{donde puede no anularse}, the matrix $M$ has the shape showed in Figure~\ref{matrix M en figura},
where $\Gamma_1$, $\Gamma_7$, $\Gamma_{10}$, $\Gamma_{16}$, $\alpha_1$, $\alpha_4$, $\alpha_6$, $\beta_1,\dots ,\beta_{12}$ are as above, and
\begin{alignat*}{4}
& A_1\coloneqq  \lambda_{x|y|x|y}^{z|y|z|y}, &&\quad A_2\coloneqq  \lambda_{x|y|z|y}^{x|y|z|y}, && \quad A_3\coloneqq  \lambda_{z|y|x|y}^{z|y|x|y}, &&\quad A_4\coloneqq  \lambda_{z|y|z|y}^{x|y|x|y},\\
&B_1 \coloneqq \lambda_{x|y|x|y}^{y|y|z|y}, &&\quad B_2 \coloneqq \lambda_{x|y|x|y}^{z|y|y|y},&&\quad B_3 \coloneqq \lambda_{x|y|x|y}^{z|y|z|z},&&\quad B_4 \coloneqq  \lambda_{x|y|x|y}^{z|z|z|y},\\
&B_5 \coloneqq \lambda_{x|y|z|y}^{x|x|z|y},&&\quad B_6 \coloneqq \lambda_{x|y|z|y}^{x|y|y|y},&&\quad B_7 \coloneqq \lambda_{x|y|z|y}^{x|y|z|z},&&\quad B_8 \coloneqq  \lambda_{x|y|z|y}^{y|y|z|y},\\
&B_9 \coloneqq \lambda_{z|y|x|y}^{y|y|x|y},&&\quad B_{10} \coloneqq \lambda_{z|y|x|y}^{z|y|x|x},&&\quad B_{11} \coloneqq \lambda_{z|y|x|y}^{z|y|y|y},&&\quad B_{12} \coloneqq  \lambda_{z|y|x|y}^{z|z|x|y},\\
&B_{13} \coloneqq \lambda_{z|y|z|y}^{x|x|x|y},&&\quad B_{14} \coloneqq \lambda_{z|y|z|y}^{x|y|x|x},&&\quad B_{15} \coloneqq \lambda_{z|y|z|y}^{x|y|y|y},&&\quad B_{16} \coloneqq  \lambda_{z|y|z|y}^{y|y|x|y},\\
& \Gamma_2 \coloneqq \lambda_{x|y|x|y}^{x|x|y|y},&&\quad \Gamma_3 \coloneqq  \lambda_{x|y|x|y}^{y|y|x|x},&&\quad \Gamma_4 \coloneqq \lambda_{x|y|x|y}^{y|y|y|y}, &&\quad \Gamma_5 \coloneqq \lambda_{x|y|z|y}^{x|x|y|y},\\
&\Gamma_6 \coloneqq  \lambda_{x|y|z|y}^{x|x|z|z},&&\quad \Gamma_8 \coloneqq \lambda_{x|y|z|y}^{y|y|z|z}, &&\quad \Gamma_9 \coloneqq \lambda_{z|y|x|y}^{y|y|x|x},&&\quad \Gamma_{11} \coloneqq  \lambda_{z|y|x|y}^{z|z|x|x},\\
&\Gamma_{12} \coloneqq \lambda_{z|y|x|y}^{z|z|y|y},&&\quad \Gamma_{13} \coloneqq \lambda_{z|y|z|y}^{x|x|x|x},&&\quad \Gamma_{14} \coloneqq  \lambda_{z|y|z|y}^{x|x|y|y},&&\quad \Gamma_{15} \coloneqq \lambda_{z|y|z|y}^{y|y|x|x},\\
&\alpha_2\coloneqq\alpha_r(x)(x,y),&&\quad \alpha_3\coloneqq\alpha_r(y)(x,y),&&\quad \alpha_5\coloneqq\alpha_r(z)(x,y),&&\\
&\alpha_7\coloneqq\alpha_l(x)(z,y),&&\quad \alpha_8\coloneqq\alpha_r(x)(z,y),&&\quad \alpha_9\coloneqq\alpha_l(y)(z,y),&&\\
&\alpha_{10}\coloneqq\alpha_r(y)(z,y),&&\quad \alpha_{11}\coloneqq\alpha_r(z)(z,y), &&\quad \alpha_{12}\coloneqq\alpha_l(z)(z,y).&&
\end{alignat*}

A direct computation using item~6) of Subsection~\ref{small intervals} proves that
\begin{equation}\label{alphas en terminos de alphas}
\alpha_k =\alpha_{13-k}/C(x,y) \quad\text{for $k=7,\dots,12$.}
\end{equation}
Moreover, by Proposition~\ref{lambdas como productos} we know that
\begin{align}
&\begin{alignedat}{2}\label{relaciones para As}
& A_1=\alpha_1 \alpha_3=\alpha_2\alpha_4,&&\qquad A_2=\alpha_3 \alpha_7=\alpha_5\alpha_9,\\
& A_3=\alpha_4 \alpha_8=\alpha_6\alpha_{10},&&\qquad A_4=\alpha_9 \alpha_{11}=\alpha_{10}\alpha_{12},
\end{alignedat}\\[3pt]
&\begin{alignedat}{4}\label{relaciones para los Bs}
& B_1=\alpha_2 \beta_4,  &&\qquad B_2=\alpha_1 \beta_3,&&\qquad B_3=-\alpha_4\beta_2,&&\qquad B_4 =  -\alpha_3\beta_1,\\
& B_5=\alpha_5 \beta_9, &&\qquad B_6=-\alpha_9 \beta_5,&&\qquad B_7=\alpha_7\beta_3,&&\qquad B_8 =  -\alpha_3\beta_7,\\
& B_9=-\alpha_{10} \beta_6,&& \qquad B_{10}=\alpha_6 \beta_{10},&&\qquad B_{11}=-\alpha_4\beta_8,&&\qquad B_{12} =  \alpha_8\beta_4,\\
& B_{13}=\alpha_{11} \beta_9,&& \qquad B_{14}=\alpha_{12} \beta_{10},&&\qquad B_{15}=-\alpha_9\beta_{11},&&\qquad B_{16} =  -\alpha_{10}\beta_{12}
\end{alignedat}
\shortintertext{and}
&\begin{alignedat}{4}\label{relaciones para Gamas caso 2}
&\Gamma_2 = -\beta_1\beta_3,&&\qquad \Gamma_3= -\beta_2\beta_4,&&\qquad\Gamma_5 = -\beta_5\beta_9,&&\qquad \Gamma_8= -\beta_7\beta_3,\\
&\Gamma_9 = -\beta_6\beta_{10},&&\qquad \Gamma_{12}= -\beta_8\beta_4,&&\qquad\Gamma_{14} = -\beta_{11}\beta_9,&&\qquad \Gamma_{15}= -\beta_{10}\beta_{12},
\end{alignedat}
\end{align}
and by Remark~\ref{compatibilidad con epsilon} and Proposition~\ref{donde puede no anularse}, we know that
\begin{equation}\label{relacion entre los gammas caso 2}
\begin{alignedat}{2}
&\Gamma_4 = -(\Gamma_1 + \Gamma_2 + \Gamma_3),&&\qquad \Gamma_6 = -(\Gamma_5 + \Gamma_7 + \Gamma_8),\\
&\Gamma_{11} = -(\Gamma_9 + \Gamma_{10} + \Gamma_{12}),&&\qquad \Gamma_{13} = -(\Gamma_{14} + \Gamma_{15} + \Gamma_{16}).
\end{alignedat}
\end{equation}
Equalities~\eqref{alphas en terminos de alphas}--\eqref{relacion entre los gammas caso 2} imply that
$\Gamma_1$, $\Gamma_7$, $\Gamma_{10}$, $\Gamma_{16}$, $\alpha_1,\dots ,\alpha_6$, $\beta_1,\dots ,\beta_{12}$ and $C(x,y)$
determine~$M$. A direct computation using equalities~\eqref{relaciones para As} and~\eqref{relaciones para los Bs} proves that
\begin{equation}\label{relaciones de Alphas}
\alpha_3 = C_2 \alpha_4\quad\text{and}\quad \alpha_5 = C_2 \alpha_6.
\end{equation}
So, $M$ only depends of $\Gamma_1$, $\Gamma_7$, $\Gamma_{10}$, $\Gamma_{16}$, $\alpha_1,\alpha_4 ,\alpha_6$, $\beta_1,\dots ,\beta_{12}$, $C(x,y)$ and~$C_2$,  as desired.

In the sequel we will provide without proofs analogous results to Theorem~\ref{Lambdas caso flip x prec y} and Corollary~\ref{el orden x menor que y},
for the configuration that we are considering.
Similar arguments as in the proof of Theorem~\ref{Lambdas caso flip x prec y} show that $M$ necessarily belongs to one of the families listed in Table~\ref{tabla}, where $C_1\in K$ is a fixed elements such that $C_1^2=1/C(x,y)$
and the elements $G_1,\dots, G_9$ and $F_j$, $j=1,\dots, 12$ are given by
\begin{align*}
&G_1 \coloneqq -\beta_1 \beta_3 C_1 + \beta_2 \beta_4 C_1 + \beta_1 \beta_5 C_1 - \beta_2 \beta_6 C_1 + \beta_5 C_4 - \beta_6 C_4 + \Gamma_{10}, \\[4pt]
&G_2 \coloneqq  -\beta_1^2 C_1^2 + \beta_1 \beta_4 C_1^2 - \beta_2 \beta_6 C_1^2 + \beta_3 \beta_6 C_1^2 + \beta_3 C_1 C_4 + \beta_4 C_1 C_4 + C_4^2 + C_1^2 \Gamma_1, \\[4pt]
&G_3 \coloneqq  -C_1(\beta_1+\beta_2), \\[4pt]
&G_4 \coloneqq  \frac{1}{4 C_1^2}\Bigl(-C_3^2(\alpha_1 \alpha_6 C_1^2-1) (\alpha_4 C_1 (\alpha_1 C_1 C_2+C_2+1)+1)\\
&\phantom{G_6} -2 C_3 C_4 (\alpha_1 \alpha_6 C_1^2+1) (\alpha_1 \alpha_4 C_1^2 C_2-1)\\
&\phantom{G_6} -C_4^2 (\alpha_1 \alpha_6 C_1^2-1) (\alpha_4 C_1 (C_2 (\alpha_1 C_1-1)-1)+1)+4 \alpha_1 \alpha_6 C_1^2 \Gamma_{16}\Bigr),\\[4pt]
&G_5 \coloneqq  -\frac{1}{4 C_1}\Bigl(-( C_2-1 ) (1 + \alpha_4 C_1 (1 + C_2 + \alpha_6 C_1 C_2)) C_3^2 - 2 \alpha_4 C_1 ( C_2^2-1 ) C_3 C_4\\
&\phantom{G7}  + ( C_2-1 ) (1 +  \alpha_4 C_1 ( \alpha_6 C_1 C_2 - C_2 -1)) C_4^2 - 4 C_1 C_2 \Gamma_{10}\Bigr),\\[4pt]
&G_6 \coloneqq  \frac 14 (C_3^2 + \alpha_4 C_1 C_3^2 + \alpha_4 C_1 C_2 C_3^2 + \alpha_1 \alpha_4 C_1^2 C_2 C_3^2 - 2 C_3 C_4 + 2 \alpha_1 \alpha_4 C_1^2 C_2 C_3 C_4 \\
&\phantom{G8} + C_4^2 - \alpha_4 C_1 C_4^2 - \alpha_4 C_1 C_2 C_4^2 + \alpha_1 \alpha_4 C_1^2 C_2 C_4^2),\\[4pt]
&G_7\coloneqq \frac 14 ((-C_3^2 - \alpha_4 C_1 C_3^2 - \alpha_4 C_1 C_2 C_3^2 - \alpha_4 \alpha_6 C_1^2 C_2 C_3^2 + 2 \alpha_4 C_1 C_3 C_4 \\
&\phantom{G9} -  2 \alpha_4 C_1 C_2 C_3 C_4 + C_4^2 - \alpha_4 C_1 C_4^2 - \alpha_4 C_1 C_2 C_4^2 + \alpha_4 \alpha_6 C_1^2 C_2 C_4^2))\\[4pt]
& F_j \coloneqq \alpha_j\frac{C_4-C_3}2-\frac{C_3+C_4}{2C_1}\quad\text{for $j=1,\dots,6$,}
\intertext{and}
& F_j \coloneqq  \alpha_{13-j} C_1\frac{C_3+C_4}2+\frac{C_3-C_4}2\quad\text{for $j=7,\dots,12$.}
\end{align*}

\begin{remark}
Let $X$ be the poset $(\{x,y,\},x\prec y \succ z)$, let $D$ be the incidence coalgebra of $X$ and let
$$
r\colon D \ot D \longrightarrow D\ot D
$$
be a map. By the same argument as in the proof of Corollary~\ref{el orden x menor que y}, if $(D,r)$ is a non-degenerate braided set such that $r_{|}$ is not the flip, then the matrix $M$ associated with $r$ via~\eqref{segunda Matriz M} belongs to one of the families in Table~\ref{tabla}. On the other hand each member $M$ of the families yields a solution of the Yang-Baxter equation. Note that these families are not disjoint.
\end{remark}

\setcounter{MaxMatrixCols}{25}
\begin{amssidewaysfigure}
\centering
$$
\begin{pmatrix*}[c]
 0 & 0 & 0 & 0 & 0 & 0 & 0 & 0 & 0 & 0 & 0 & 0 & 0 & 0 & 0 & 0 & 0 & 0 & \Gamma_{13} & \beta_{11} & 0 & 0 & 0 & \beta_{12} & 1 \\
 0 & 0 & 0 & 0 & 0 & 0 & 0 & 0 & 0 & 0 & 0 & 0 & 0 & 0 & 0 & 0 & 0 & 0 & B_{13} & \alpha_{11} & 0 & 0 & 0 & 0 & 0 \\
 0 & 0 & 0 & 0 & 0 & 0 & 0 & 0 & \Gamma_5 &\!\!\!\! -\beta_5 & 0 & 0 & 0 & \beta_9 & 1 & 0 & 0 & 0 & \Gamma_{14} & \!\!\!\!-\beta_{11} & 0 & 0 & 0 & 0 & 0\\
 0 & 0 & 0 & 0 & 0 & 0 & 0 & 0 & B_5 & \alpha_5 & 0 & 0 & 0 & 0 & 0 & 0 & 0 & 0 & 0 & 0 & 0 & 0 & 0 & 0 & 0 \\
 0 & 0 & 0 & \beta_7 & 1 & 0 & 0 & 0 & \Gamma_6 & \beta_5 & 0 & 0 & 0 & 0 & 0 & 0 & 0 & 0 & 0 & 0 & 0 & 0 & 0 & 0 & 0 \\
 0 & 0 & 0 & 0 & 0 & 0 & 0 & 0 & 0 & 0 & 0 & 0 & 0 & 0 & 0 & 0 & 0 & 0 & B_{14} & 0 & 0 & 0 & 0 & \alpha_{12} & 0 \\
 0 & 0 & 0 & 0 & 0 & 0 & 0 & 0 & 0 & 0 & 0 & 0 & 0 & 0 & 0 & 0 & 0 & 0 & A_4 & 0 & 0 & 0 & 0 & 0 & 0 \\
 0 & 0 & 0 & 0 & 0 & 0 & 0 & 0 & B_6 & 0 & 0 & 0 & 0 & \alpha_9 & 0 & 0 & 0 & 0 & B_{15} & 0 & 0 & 0 & 0 & 0 & 0 \\
 0 & 0 & 0 & 0 & 0 & 0 & 0 & 0 & A_2 & 0 & 0 & 0 & 0 & 0 & 0 & 0 & 0 & 0 & 0 & 0 & 0 & 0 & 0 & 0 & 0 \\
 0 & 0 & 0 & \alpha_7 & 0 & 0 & 0 & 0 & B_7 & 0 & 0 & 0 & 0 & 0 & 0 & 0 & 0 & 0 & 0 & 0 & 0 & 0 & 0 & 0 & 0 \\
 0 & 0 & 0 & 0 & 0 & 0 & 0 & 0 & 0 & 0 & 0 & 0 & 0 & 0 & 0 & 0 &\Gamma_9 &\beta_{10} & \Gamma_{15} & 0 & 0 &\!\!\!\! -\beta_6 & 1 &\!\!\!\!-\beta_{12} & 0\\
 0 & 0 & 0 & 0 & 0 & 0 & 0 & 0 & 0 & 0 & 0 & 0 & 0 & 0 & 0 & 0 & B_9 & \alpha_{10} & B_{16} & 0 & 0 & 0 & 0 & 0 & 0 \\
 0 & 0 & 0 & 0 & 0 & 0 & \Gamma_1 & \!\!\!\!-\beta_3 & \Gamma_7 & 0 & 0 & \!\!\!\!-\beta_4 & 1 & \!\!\!\!-\beta_9 & 0 & 0 & \Gamma_{10} & \!\!\!\!-\beta_{10} & \Gamma_{16} & 0 & 0 & 0 & 0 & 0 & 0 \\
 0 & 0 & 0 & 0 & 0 & 0 & B_1 & \alpha_3 & B_8 & 0 & 0 & 0 & 0 & 0 & 0 & 0 & 0 & 0 & 0 & 0 & 0 & 0 & 0 & 0 & 0 \\
 0 & \!\!\!\!-\beta_1 & 1 & \!\!\!\!-\beta_7 & 0 & 0 & \Gamma_2 & \beta_3 & \Gamma_8 & 0 & 0 & 0 & 0 & 0 & 0 & 0 & 0 & 0 & 0 & 0 & 0 & 0 & 0 & 0 & 0 \\
 0 & 0 & 0 & 0 & 0 & 0 & 0 & 0 & 0 & 0 & 0 & 0 & 0 & 0 & 0 & 0 & B_{10} & 0 & 0 & 0 & 0 & \alpha_6 & 0 & 0 & 0 \\
 0 & 0 & 0 & 0 & 0 & 0 & 0 & 0 & 0 & 0 & 0 & 0 & 0 & 0 & 0 & 0 & A_3 & 0 & 0 & 0 & 0 & 0 & 0 & 0 & 0 \\
 0 & 0 & 0 & 0 & 0 & 0 & B_2 & 0 & 0 & 0 & 0 & \alpha_4 & 0 & 0 & 0 & 0 & B_{11} & 0 & 0 & 0 & 0 & 0 & 0 & 0 & 0 \\
 0 & 0 & 0 & 0 & 0 & 0 & A_1 & 0 & 0 & 0 & 0 & 0 & 0 & 0 & 0 & 0 & 0 & 0 & 0 & 0 & 0 & 0 & 0 & 0 & 0 \\
 0 & \alpha_1 & 0 & 0 & 0 & 0 & B_3 & 0 & 0 & 0 & 0 & 0 & 0 & 0 & 0 & 0 & 0 & 0 & 0 & 0 & 0 & 0 & 0 & 0 & 0 \\
 0 & 0 & 0 & 0 & 0 & 0 & 0 & 0 & 0 & 0 & 0 & 0 & 0 & 0 & 0 & \beta_8 & \Gamma_{11} & 0 & 0 & 0 & 1 & \beta_6 & 0 & 0 & 0 \\
 0 & 0 & 0 & 0 & 0 & 0 & 0 & 0 & 0 & 0 & 0 & 0 & 0 & 0 & 0 & \alpha_8 & B_{12} & 0 & 0 & 0 & 0 & 0 & 0 & 0 & 0 \\
 0 & 0 & 0 & 0 & 0 & \!\!\!\!-\beta_2 & \Gamma_3 & 0 & 0 & 0 & 1 & \beta_4 & 0 & 0 & 0 & \!\!\!\!-\beta_8 & \Gamma_{12} & 0 & 0 & 0 & 0 & 0 & 0 & 0 & 0 \\
 0 & 0 & 0 & 0 & 0 & \alpha_2 & B_4 & 0 & 0 & 0 & 0 & 0 & 0 & 0 & 0 & 0 & 0 & 0 & 0 & 0 & 0 & 0 & 0 & 0 & 0 \\
 1 & \beta_1 & 0 & 0 & 0 & \beta_2 & \Gamma_4 & 0 & 0 & 0 & 0 & 0 & 0 & 0 & 0 & 0 & 0 & 0 & 0 & 0 & 0 & 0 & 0 & 0 & 0 \\
\end{pmatrix*}
$$
\caption{The matrix $M$}
\label{matrix M en figura}
\end{amssidewaysfigure}

\setlength{\tabcolsep}{3.9pt}
\ra{1.2}
\begin{table}
\begin{tabular}{lllll}
\toprule
$\#$ & \begin{minipage}[l]{2.3cm}\begin{center} Fixed values \\ in each family \end{center} \end{minipage} &
\!\!\!\begin{minipage}[l]{2.3cm}\begin{center} Fixed values \\ in subfamilies \end{center} \end{minipage}& \begin{minipage}[l]{2.3cm}\begin{center} Continuous \\ parameters \end{center} \end{minipage} & \begin{minipage}[l]{2.3cm}\begin{center} Discrete \\ parameters \end{center} \end{minipage}\\
\midrule
1. & For $j=1,\dots,12$ && $\alpha_1,\alpha_4,\alpha_6\in K^{\times}$ & \\
&$\beta_j=0$ &&$C_1, C_2\in K^{\times}$&\\[4pt]
& $\Gamma_{\!1}\!=\!\Gamma_{\!7}\!=\!\Gamma_{\!10}\!=\!\Gamma_{\!16}\!=\!0$ & & &\\
\midrule
2. & For $j=1,\dots,12$ & $\Gamma_{16}=0$ &$C_1,\Gamma_7\in K^{\times}$ & $C_2\in\{\pm 1\}$\\
   & $\beta_j=0$ & $\alpha_1=\varepsilon_1/C_1$ & & $\varepsilon^2=C_2$ \\
   &   & $\alpha_4=\varepsilon_4/C_1$ & & $\varepsilon_1,\varepsilon_4,\varepsilon_6\in\{\pm \varepsilon\}$ \\
   &   & $\alpha_6=\varepsilon_6/C_1$ & &\\
\cmidrule(r){3-5}
   &$\Gamma_1=\alpha_1\alpha_6\Gamma_{16}$ & $\alpha_6=\alpha_1$ &$C_1,\Gamma_{16}\in K^{\times}$ & $C_2\in\{\pm 1\}$\\
   & $\Gamma_{10}=C_2\Gamma_{7}$ & $\alpha_1=\varepsilon_1/C_1$ &  $\Gamma_7\in K$ & $\varepsilon^2=C_2$ \\
   &   & $\alpha_4=\varepsilon_4/C_1$ & & $\varepsilon_1,\varepsilon_4\in\{\pm \varepsilon\}$ \\
\midrule
3. & $\alpha_1=\alpha_4=\alpha_6=\frac{1}{C_1}$& For $j\!=\!2,\dots,6$ &$\Gamma_1,\Gamma_{10}\in K$&\\
   & $C_2=1$ & $\beta_j=\beta_1$ & $\beta_1,C_1\in K^{\times}$ & \\
   & & $C_4=G_3$ & &\\
\cmidrule(r){3-5}
   & For $j\!=\!1,\dots,6$ & $\beta_3=\beta_5=\beta_2$&$\Gamma_1,\Gamma_{10},C_4\in K$ &\\
   & $\beta_{13-j}=C_4+C_1\beta_j$ & $\beta_4=\beta_6=\beta_1$ &$C_1\in K^{\times}$&\\
   & & &$\beta_1,\beta_2\!\in \!K$, $\beta_1\!\ne \!\beta_2$ &\\
\cmidrule(r){3-5}
   & $\Gamma_7=G_1$ & $\Gamma_1=\beta_1 \beta_2$ &$C_1\in K^{\times}$&\\
   & $\Gamma_{16}=G_2$ & $\Gamma_{10}=-\beta_1\beta_6 C_1 $&$\beta_1,\beta_2,\beta_5\in K$,  $\beta_2\!\ne \!\beta_5$&\\
   & & $C_4=G_3$  & &\\
   & &$\beta_4=\beta_1$ & &\\
   & &$\beta_3=\beta_2$ & &\\
   & & $\beta_{\!6}=\beta_{\!1}\!+\!\beta_{\!2}\! - \!\beta_{\!5}$  & &\\
\midrule
4. & For $j=1,\dots,12$  & $\Gamma_{10}=G_7$ & $\alpha_1,\alpha_4,\alpha_6\in K^{\times}$&\\
   & $\beta_j=F_j$ & & $C_1, C_2\in K^{\times}$&\\
&  &&$C_3, C_4\in K$&\\
\cmidrule(r){3-5}
  & $\Gamma_1=G_4$ & $\alpha_1=\varepsilon_1/C_1$ &$C_1\in K^{\times}$ & $C_2\in\{\pm 1\}$\\
  & $\Gamma_7=G_5$ &$\alpha_4=\varepsilon_4/C_1$&$C_3, C_4,\Gamma_{10}\in K$& $\varepsilon^2=C_2$ \\
  & $\Gamma_{16}=G_6$ & $\alpha_6=\varepsilon_6/C_1$ &&$\varepsilon_1,\varepsilon_4,\varepsilon_6\in\{\pm \varepsilon\}$\\
\bottomrule
\end{tabular}
\vspace{5pt}
\caption{Families for $M$}
  \label{tabla}
\end{table}

\begin{bibdiv}
\begin{biblist}

\bib{AGV}{article}{
   author={Angiono, Iv\'an},
   author={Galindo, C\'esar},
   author={Vendram\'in, Leandro},
   title={Hopf braces and {Yang-Baxter} operators},
   journal={Accepted for publication in Proc. Amer. Math. Soc.},
   journal={ArXiv e-prints},
   eprint = {arXiv:1604.02098},
   date = {2016},
}

\bib{CJO2}{article}{
   author={Ced{\'o}, Ferran},
   author={Jespers, Eric},
   author={Okni{\'n}ski, Jan},
   title={Braces and the Yang-Baxter equation},
   journal={Comm. Math. Phys.},
   volume={327},
   date={2014},
   number={1},
   pages={101--116},
   issn={0010-3616},
   review={\MR{3177933}},
   doi={10.1007/s00220-014-1935-y},
}

\bib{De}{article}{
   author={Dehornoy, Patrick},
   title={Set-theoretic solutions of the Yang-Baxter equation, RC-calculus,
   and Garside germs},
   journal={Adv. Math.},
   volume={282},
   date={2015},
   pages={93--127},
   issn={0001-8708},
   review={\MR{3374524}},
   doi={10.1016/j.aim.2015.05.008},
}

\bib{Dr}{article}{
   author={Drinfel{\cprime}d, V. G.},
   title={On some unsolved problems in quantum group theory},
   conference={
      title={Quantum groups},
      address={Leningrad},
      date={1990},
   },
   book={
      series={Lecture Notes in Math.},
      volume={1510},
      publisher={Springer, Berlin},
   },
   date={1992},
   pages={1--8},
   review={\MR{1183474}},
   doi={10.1007/BFb0101175},
}

\bib{CJO}{article}{
   author={Ced{\'o}, Ferran},
   author={Jespers, Eric},
   author={Okni{\'n}ski, Jan},
   title={Retractability\hspace{-0.45pt} of\hspace{-0.45pt} set\hspace{-0.45pt} theoretic\hspace{-0.45pt} solutions\hspace{-0.45pt} of\hspace{-0.45pt} the\hspace{-0.45pt} Yang-Baxter\hspace{-0.45pt} equation},
   journal={Adv. Math.},
   volume={224},
   date={2010},
   number={6},
   pages={2472--2484},
   issn={0001-8708},
   review={\MR{2652212}},
   doi={10.1016/j.aim.2010.02.001},
}

\bib{CJR}{article}{
   author={Ced{\'o}, Ferran},
   author={Jespers, Eric},
   author={del R{\'{\i}}o, {\'A}ngel},
   title={Involutive Yang-Baxter groups},
   journal={Trans. Amer. Math. Soc.},
   volume={362},
   date={2010},
   number={5},
   pages={2541--2558},
   issn={0002-9947},
   review={\MR{2584610}},
   doi={10.1090/S0002-9947-09-04927-7},
}

\bib{GIVdB}{article}{
   author={Gateva-Ivanova, Tatiana},
   author={Van den Bergh, Michel},
   title={Semigroups of $I$-type},
   journal={J. Algebra},
   volume={206},
   date={1998},
   number={1},
   pages={97--112},
   issn={0021-8693},
   review={\MR{1637256}},
   doi={10.1006/jabr.1997.7399},
}

\bib{GI}{article}{
   author={Gateva-Ivanova, Tatiana},
   title={A combinatorial approach to the set-theoretic solutions of the
   Yang-Baxter equation},
   journal={J. Math. Phys.},
   volume={45},
   date={2004},
   number={10},
   pages={3828--3858},
   issn={0022-2488},
   review={\MR{2095675}},
   doi={10.1063/1.1788848},
}

\bib{GGV}{article}{
   author={Guccione, Jorge Alberto},
    author={Guccione, Juan Jos\'e},
   author={Vendram\'in, Leandro},
   title={Yang-Baxter operators in symmetric categories},
   journal={arXiv:1610.05999}
   status={preprint},
   date={2016},
   pages={1--35},
}

\bib{LYZ}{article}{
   author={Lu, Jiang-Hua},
   author={Yan, Min},
   author={Zhu, Yong-Chang},
   title={On the set-theoretical Yang-Baxter equation},
   journal={Duke Math. J.},
   volume={104},
   date={2000},
   number={1},
   pages={1--18},
   issn={0012-7094},
   review={\MR{1769723}},
   doi={10.1215/S0012-7094-00-10411-5},
}

\bib{Ru}{article}{
   author={Rump, Wolfgang},
   title={A decomposition theorem for square-free unitary solutions of the
   quantum Yang-Baxter equation},
   journal={Adv. Math.},
   volume={193},
   date={2005},
   number={1},
   pages={40--55},
   issn={0001-8708},
   review={\MR{2132760}},
   doi={10.1016/j.aim.2004.03.019},
}

\bib{So}{article}{
   author={Soloviev, Alexander},
   title={Non-unitary set-theoretical solutions to the quantum Yang-Baxter
   equation},
   journal={Math. Res. Lett.},
   volume={7},
   date={2000},
   number={5-6},
   pages={577--596},
   issn={1073-2780},
   review={\MR{1809284}},
   doi={10.4310/MRL.2000.v7.n5.a4},
}

\bib{Ta}{article}{
   author={Takeuchi, Mitsuhiro},
   title={Survey on matched pairs of groups---an elementary approach to the
   ESS-LYZ theory},
   conference={
      title={Noncommutative geometry and quantum groups},
      address={Warsaw},
      date={2001},
   },
   book={
      series={Banach Center Publ.},
      volume={61},
      publisher={Polish Acad. Sci., Warsaw},
   },
   date={2003},
   pages={305--331},
   review={\MR{2024436}},
   doi={10.4064/bc61-0-19},
}

\bib{ESS}{article}{
   author={Etingof, Pavel},
   author={Schedler, Travis},
   author={Soloviev, Alexandre},
   title={Set-theoretical solutions to the quantum Yang-Baxter equation},
   journal={Duke Math. J.},
   volume={100},
   date={1999},
   number={2},
   pages={169--209},
   issn={0012-7094},
   review={\MR{1722951}},
   doi={10.1215/S0012-7094-99-10007-X},
}

\end{biblist}
\end{bibdiv}
\end{document}